\newcommand{\elemcube}[4][white]{	
	\draw [fill=#1,very thin] (#2+1,#3,#4) -- ++(0,1,0) -- ++(0,0,-1) -- ++(0, -1, 0) -- cycle;
	\draw [fill=#1,very thin] (#2,#3+1,#4) -- ++(1,0,0) -- ++(0,0,-1) -- ++(-1, 0, 0) -- cycle; 
	\draw [fill=#1,very thin] (#2,#3,#4)   -- ++(1,0,0) -- ++(0,1,0)  -- ++(-1, 0, 0) -- cycle;                         
}
\newtheorem{theoreme}{Theorem}[section]
\newtheorem{lemme}[theoreme]{Lemma}
\theoremstyle{definition}
\newtheorem{corollaire}[theoreme]{Corollary}
\theoremstyle{definition}
\theoremstyle{definition}
\theoremstyle{definition}
\theoremstyle{definition}
\theoremstyle{definition}
\theoremstyle{definition}
\newtheorem{remarque}[theoreme]{Remark}
\theoremstyle{definition}
\newtheorem{proposition}[theoreme]{Proposition}
\theoremstyle{definition}
\theoremstyle{definition}
\newtheorem*{damier}{Construction of random checkerboards}
\theoremstyle{definition}
\newtheorem{claim}[theoreme]{Claim}
\newcommand{\Z}{\mathbb{Z}}
\newcommand{\R}{\mathbb{R}}
\newcommand{\N}{\mathbb{N}}
\newcommand{\T}{\mathbb{T}}
\renewcommand{\P}{\mathbb{P}}
\newcommand{\cA}{\mathcal{A}}
\newcommand{\cD}{\mathcal{D}}
\newcommand{\cG}{\mathcal{G}}
\newcommand{\cP}{\mathscr{P}}
\newcommand{\tij}{t^i_j\left(\gamma\right)}
\newcommand{\tio}{t^i_0\left(\gamma\right)}
\newcommand{\tioji}{t^{i_0}_{j_i}\left(\gamma\right)}
\newcommand{\tiojii}{t^{i_0}_{j_i+1}\left(\gamma\right)}
\newcommand{\tioo}{t^{i_0}_{0}\left(\gamma\right)}
\newcommand{\tijp}{t^i_j\left(\gamma^\prime\right)}
\newcommand{\tiop}{t^i_0\left(\gamma^\prime\right)}
\newcommand{\tiojip}{t^{i_0}_{j_i}\left(\gamma^\prime\right)}
\newcommand{\tiojiip}{t^{i_0}_{j_i+1}\left(\gamma^\prime\right)}
\newcommand{\tioop}{t^{i_0}_{0}\left(\gamma^\prime\right)}
\newcommand{\ai}{\alpha^i\left(\gamma\right)}
\newcommand{\aio}{\alpha^{i_0}\left(\gamma\right)}
\newcommand{\aip}{\alpha^i\left(\gamma^\prime\right)}
\newcommand{\aiop}{\alpha^{i_0}\left(\gamma^\prime\right)}
\newcommand{\tik}{t^i_{k_i}\left(\gamma\right)}
\newcommand{\si}{s_i\left(\gamma\right)}
\newcommand{\sii}{s_{i+1}\left(\gamma\right)}
\newcommand{\tf}[2]{t^{#1}_{#2}\left(\gamma\right)}
\renewcommand{\sf}[1]{s_{#1}\left(\gamma\right)}
\newcommand{\m}{m^T_{\gamma}\left(\omega^n_{\varepsilon}\right)}
\newcommand{\mt}{m^T_{\gamma_t}\left(\omega^n_{\varepsilon}\right)}
\newcommand{\mpp}{m^T_{\gamma_p}\left(\omega^n_{\varepsilon}\right)}
\renewcommand{\mp}{m^T_{\gamma^\prime}\left(\omega^n_{\varepsilon}\right)}
\newcommand{\mk}{m^T_{\gamma_k}\left(\omega^n_{\varepsilon}\right)}
\newcommand{\mc}{m^T_{\gamma}\left(c_{i_1, \dots, i_d}\right)}
\newcommand{\mkc}{m^T_{\gamma_k}\left(c_{i_1, \dots, i_d}\right)}
\newcommand{\co}{\chi_{\omega_\varepsilon^n}}
\newcommand{\cc}{\chi_{c_{i_1, \dots, i_d}}}
\newcommand{\ccg}{\chi_{c_{i_1, \dots, i_d}}\left(\g\right)}
\newcommand{\ccgk}{\chi_{c_{i_1, \dots, i_d}}\left(\g_k\right)}
\newcommand{\ccgkt}{\chi_{c_{i_1, \dots, i_d}}\left(\g_k\left(t\right)\right)}
\newcommand{\ccgt}{\chi_{c_{i_1, \dots, i_d}}\left(\g\left(t\right)\right)}
\newcommand{\cij}{c_{i_1, \dots, i_d}}
\newcommand{\icij}{\mathring{c}_{i_1, \dots, i_d}}
\newcommand{\g}{\gamma}
\newcommand{\G}{\hat{\Gamma}}
\title{Geometrical quantity on random checkerboards on the regular torus}
\author{Léa Gohier\footnote{Institut Denis Poisson, UMR-CNRS 7013, Université de Tours, lea.gohier@univ-tours.fr}}
\date{}
\begin{document}
	
	\maketitle
	
	\pagenumbering{arabic}
	


\begin{abstract}
	In the study of the observability of the wave equation (here on $(0,T)\times \T^d$, where $\T^d$ is the d-dimensional torus), a condition naturally emerges as a sufficient observability condition. This condition, which writes $\ell^T\left(\omega\right)>0$, signifies that the smallest time spent by a geodesic in the subset $\omega\subset \T^d$ during time $T$ is non-zero. In other words, the subset $\omega$ detects any geodesic propagating on the d-dimensional torus during time $T$. Here, the subset $\omega$ is randomly defined by drawing a grid of $n^d$, $n\in\N$, small cubes of equal size and by adding them to $\omega$ with probability $\varepsilon>0$.
	In this article, we establish a probabilistic property of the functional $\ell^T$: the random law $\ell^T\left(\omega_\varepsilon^n\right)$ converges in probability to $\varepsilon$ as $n \to + \infty$.
	Considering random subsets $\omega_\varepsilon^n$ allows us to construct subsets $\omega$ such that $\ell^T\left(\omega\right)=|\omega|$.
\end{abstract}
	
	\section{Introduction}

		\subsection*{Motivation}

	Let $(M,g)$ a compact Riemannian manifold. We denote by $\Gamma$ the set of geodesics propagating on $M$, that is,\label{geo} the set of projections onto $M$ of Riemannian geodesic curves in the co-sphere bundle $S^{*}M$.
	
	We consider the wave equation
	
	\begin{equation}\label{wave}
		\partial_{tt} y - \Delta_g y = 0 ;
	\end{equation}
	on $\left(0,T\right) \times M$, where $\Delta_g$ is the Laplace-Beltrami operator on $M$ with respect of the metric $g$.
	
	Denoting by $\text{d}x_g$ the canonical Riemannian volume, we define the observability constant $C_T\left(\omega\right) \geqslant 0$, where $\omega$ is a measurable subset of $M$, as the largest nonnegative constant $C$ such that the inequality 
	\begin{equation}
		\int_0^T \int_{\omega} |y\left(t,x\right)|^2 \text{d}x_g \text{d}t \geqslant C \left( \|y\left(0,\cdot\right)\|_{L^2\left(M\right)}^2 + \|\partial_t y\left(0,\cdot\right)\|_{H^{-1}\left(M\right)}^2 \right),
	\end{equation}
	where $H^{-1}\left(M\right)$ is the dual space of $H^{1}\left(M\right)$ with respect to the pivot space $L^2\left(M\right)$, is satisfied for any solution $y$ of the wave equation \eqref{wave}. Thus, 
	\begin{equation*}
		C_T\left(\omega\right)=\inf \left\{\int_0^T \int_{\omega} |y\left(t,x\right)|^2 \text{d}x_g \text{d}t \; \middle| \; \|\left(y\left(0,\cdot\right), \partial_t y\left(0,\cdot\right)\right)\|_{L^2\left(M\right)\times H^{-1}\left(M\right)}=1\right\}.
	\end{equation*}
	
	The wave equation \eqref{wave} is said to be observable on $\omega$ in time $T$ when $C_T\left(\omega\right)>0$. Considering the definition of $C_T\left(\omega\right)$, studying the observability of the wave equation appears to be difficult. This raises the question of the existence of sufficient observability conditions. The object $\ell^T\left(\omega\right)$ defined below precisely provides such a condition, through Theorem \ref{thobs}.
	
	Let $T>0$, let $\omega$ be a measurable subset of $M$, we define 
	\begin{equation}
		\ell^T\left(\omega\right) = \inf\limits_{\g\in\Gamma} \frac{1}{T} \int_0^T\chi_{\omega}\left(\g\left(t\right)\right) \text{d}t \text{ ; where } \chi_\omega : \begin{cases}
			M \longrightarrow \left\{0,1\right\}\\
			x \longmapsto \mathds{1}_{\omega}\left(x\right).
		\end{cases} 
	\end{equation} 
	Then, for any $\g\in\Gamma$, we define $m_\g^T\left(\omega\right) = \frac{1}{T} \int_0^T\chi_{\omega}\left(\g\left(t\right)\right) \text{d}t$. Thus, $\ell^T\left(\omega\right) = \inf\limits_{\g\in\Gamma} m_\g^T\left(\omega\right)$.
	The real number $\ell^T\left(\omega\right)$ represents the smallest amount of time a geodesic from $\Gamma$ spends inside $\omega$ between $0$ and $T$. 
	At least when $\omega$ is open, the condition $\ell^T\left(\omega\right)>0$ means that every geodesic from $\Gamma$ intersects $\omega$ during the time interval $T$. This condition is called the Geometric Control Condition. 
	The following theorem, proved in \cite{4}, provides a sufficient condition for the observability of the wave equation.
	
	\begin{theoreme}\label{thobs}
		If $\omega$ is open and $\ell^T\left(\omega\right)>0$, then the wave equation is observable on $\omega$ in time $T$.
	\end{theoreme}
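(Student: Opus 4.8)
The plan is to argue by contradiction using the theory of microlocal defect measures (in the sense of Gérard and Tartar, also called H-measures). Suppose the wave equation is \emph{not} observable on $\omega$ in time $T$, so that $C_T(\omega)=0$. Then there is a sequence $(y_k)$ of solutions of \eqref{wave} whose initial data are normalized in $L^2(M)\times H^{-1}(M)$ and for which
\begin{equation*}
\int_0^T\int_\omega |y_k(t,x)|^2\,\mathrm{d}x_g\,\mathrm{d}t \longrightarrow 0 .
\end{equation*}
Working at the natural regularity, the sequence $(y_k)$ is bounded in $L^2\bigl((0,T)\times M\bigr)$ but, by the normalization, cannot converge strongly; after extracting a subsequence one attaches to it a nonnegative microlocal defect measure $\mu$ on the cosphere bundle $S^*\bigl((0,T)\times M\bigr)$, which measures exactly the obstruction to strong convergence.

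The first step is to record the structural properties of $\mu$. Because each $y_k$ solves the wave equation, $\mu$ is supported on the characteristic set of the d'Alembertian, i.e. on the null cone $\{\tau^2=|\xi|_g^2\}$; and because the operator is of real principal type, a commutator (energy) computation against the wave operator shows that $\mu$ is invariant under the bicharacteristic flow, whose space projections are precisely the unit-speed geodesics of $\Gamma$. I would also need that the total mass of $\mu$ is positive: one first reduces to the case $y_k\rightharpoonup 0$, since any nonzero weak limit $y$ would be a solution vanishing on $(0,T)\times\omega$ and hence, by unique continuation under the geometric control condition, would be identically zero. Once $y_k\rightharpoonup 0$, the elementary observability on the whole manifold (valid for $\omega=M$ and any $T>0$) provides a lower bound on $\|y_k\|_{L^2((0,T)\times M)}$, and this bound is carried entirely by $\mu$, forcing $\mu$ to have positive mass.

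The conclusion then combines the vanishing of the observation term with the propagation property. Since $\int_0^T\int_\omega |y_k|^2\to 0$ and $\omega$ is open, the measure $\mu$ charges no point lying above $(0,T)\times\omega$. By flow-invariance, $\mu$ is constant along each bicharacteristic; and since $\ell^T(\omega)>0$ guarantees that every geodesic of $\Gamma$ meets $\omega$ within any time window of length $T$, every bicharacteristic passes above $\omega$ during $(0,T)$. Hence $\mu$ vanishes on a set meeting every bicharacteristic, and by invariance $\mu\equiv 0$, contradicting the positivity of its total mass. This contradiction yields $C_T(\omega)>0$.

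The main obstacle is the rigorous proof of the flow-invariance of $\mu$: this is the microlocal heart of the argument and rests on the pseudodifferential calculus (a Gårding-type positivity estimate together with a careful commutator identity with the wave operator), rather than on any soft measure-theoretic reasoning. A secondary technical point is the reduction to $y_k\rightharpoonup 0$, which must invoke unique continuation to discard a possible nonzero strong limit; and one must be attentive both to the endpoints $t=0,T$ and to the fact that the geometric control condition here is phrased quantitatively as $\ell^T(\omega)>0$, which is exactly what allows the passage from ``every geodesic meets $\omega$'' to the covering of all bicharacteristics over the open time interval.
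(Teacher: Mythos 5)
You should first be aware that the paper contains no proof of Theorem \ref{thobs} at all: it is imported verbatim from the literature (Rauch--Taylor \cite{4}; see also Bardos--Lebeau--Rauch \cite{3} for the boundary case), so there is no internal argument to compare yours against. Taken on its own merits, your sketch is the standard modern proof by contradiction via microlocal defect measures (in the style of Burq--G\'erard's proof of the Bardos--Lebeau--Rauch theorem), and its architecture is sound: support of $\mu$ in the characteristic set, invariance under the bicharacteristic flow via the commutator argument for a real principal type operator, vanishing of $\mu$ over $(0,T)\times\omega$ from the vanishing of the observation term (testing against nonnegative cutoffs supported in $(0,T)\times\omega$), and the fact that $\ell^T(\omega)>0$ with $\omega$ open forces every ray to pass over the \emph{open} time slab $(0,T)\times\omega$ (positive time measure in $[0,T]$ implies a crossing at an interior time), whence $\mu\equiv 0$ against its positive total mass. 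Your closing remark that the quantitative hypothesis $\ell^T(\omega)>0$ is precisely what dispels the usual endpoint subtleties of geometric control in time exactly $T$ is correct and well placed.

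The one step you genuinely gloss is the reduction to $y_k\rightharpoonup 0$. ``Unique continuation under the geometric control condition'' is not an available black box: the GCC is a dynamical condition on rays and does not by itself yield unique continuation for the evolution equation from $(0,T)\times\omega$. The standard repair is the compactness--uniqueness scheme: the defect-measure estimate gives observability up to a compact remainder, from which the space $N_T$ of invisible solutions is finite-dimensional; it is stable under time differentiation (or under the wave group), hence contains an eigenfunction of $-\Delta_g$ vanishing on the open set $\omega$, which vanishes identically by \emph{elliptic} unique continuation -- and on $\T^d$ one may alternatively invoke analyticity (Holmgren--John), though the eigenfunction route works for any $T$ and is cleaner. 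A second, minor point: with data normalized in $L^2(M)\times H^{-1}(M)$ you are one derivative below the usual energy setting in which the propagation theorem for defect measures is written; this is harmless but should be acknowledged, e.g. by applying $\left(1-\Delta_g\right)^{-1/2}$ to shift to $H^1\times L^2$, or by redoing the commutator computation at $L^2$ regularity. With these two standard completions, your proposal is a correct proof scheme for the theorem as cited.
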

	
	\begin{remarque}
		A similar result on manifolds with boundary is proved in \cite{3}. In this case, the rays are reflected at the boundary according to the laws of geometric optics.
	\end{remarque}
	
	Thus, $\ell^T\left(\omega\right)$ plays a crucial role in problems related to the observability of the wave equation, through the study of its positivity. The following theorem, proved in \cite{5}, also justifies the study of its exact value:
	\begin{theoreme}[Large-time observability]
		Given any $T>0$ and any measurable subset $\omega \subset M$, there exists $\alpha^T\in \left[\frac{1}{2} \ell_T\left(\mathring{\omega}\right), \frac{1}{2} \ell_T\left(\overline{\omega}\right)\right]$ such that the limit $\alpha=\lim\limits_{T\to +\infty} \alpha^T$ exists and we have
		\begin{equation}
			\underset{T\to + \infty}{\lim} \frac{C_T\left(\omega\right)}{T}=\min \left(\frac{1}{2} g_1\left(\omega\right), \alpha \right),
		\end{equation}
		where $g_1\left(\omega\right) = \inf\limits_{\phi} \frac{\int_\omega \left|\phi\left(x\right)\right|^2 \text{d}v_g}{\int_M \left|\phi\left(x\right)\right|^2 \text{d}v_g}$ (where the infimum runs over the set of all nonconstant eigenfunctions $\phi$ of $-\Delta_g$, and where $v_g$ is the canonical Riemannian volume on $M$).
	\end{theoreme}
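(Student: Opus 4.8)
The plan is to reduce the large-time behaviour of $C_T\left(\omega\right)$ to a spectral, and then microlocal, analysis of solutions of \eqref{wave}. Fix an orthonormal Hilbert basis $\left(\phi_j\right)_{j\geqslant 0}$ of $L^2\left(M\right)$ made of eigenfunctions of $-\Delta_g$, with $-\Delta_g\phi_j=\lambda_j^2\phi_j$ and $0=\lambda_0\leqslant\lambda_1\leqslant\cdots$, and write an arbitrary solution $y$ of \eqref{wave} as
\begin{equation*}
	y\left(t,x\right)=\sum_{j\geqslant 0}\left(c_j\,\e^{i\lambda_j t}+d_j\,\e^{-i\lambda_j t}\right)\phi_j\left(x\right).
\end{equation*}
First I would record, explicitly, that the normalisation $\|\left(y\left(0,\cdot\right),\partial_t y\left(0,\cdot\right)\right)\|^2_{L^2\times H^{-1}}=1$ becomes a fixed quadratic constraint on the coefficients $\left(c_j,d_j\right)$, while $\int_0^T\int_\omega|y|^2$ becomes an explicit Hermitian quadratic form in those same coefficients, so that $C_T\left(\omega\right)$ is the infimum of this form on the constraint sphere.

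The second step is to pass to the time average. Dividing by $T$ and letting $T\to+\infty$, every off-diagonal term of the quadratic form carries an oscillatory factor $\frac1T\int_0^T\e^{i\left(\lambda_j\pm\lambda_k\right)t}\,dt$, which tends to $0$ unless $\lambda_j=\pm\lambda_k$ and survives only on the resonant set $\{\lambda_j=\lambda_k\}$. I expect to prove that $\tfrac1T C_T\left(\omega\right)$ converges and that its limit is the infimum, over normalised data, of a time-averaged quadratic form whose leading (diagonal) part couples only coefficients sharing the same eigenvalue and which reduces, on data concentrated on a single eigenfunction $\phi_j$, to $\tfrac12\int_\omega|\phi_j|^2$.

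The heart of the matter is the third step: identifying this infimum with $\min\left(\tfrac12 g_1\left(\omega\right),\alpha\right)$ through a concentration dichotomy. Extracting a minimising sequence of normalised data, either its spectral mass stays in a bounded frequency window — in which case, up to a subsequence, it concentrates on eigenfunctions and the limit is bounded below by $\tfrac12 g_1\left(\omega\right)$, this value being attained on the stationary solutions $y=\cos\left(\lambda_j t\right)\phi_j$ — or its mass escapes to high frequency. In that regime I would invoke microlocal defect (quantum limit) measures: the weak limit of the densities $|y|^2\,\mathrm{d}v_g$ is then governed by a measure on the co-sphere bundle $S^*M$ that is invariant under the geodesic flow, so that the time-averaged mass seen in $\omega$ is controlled by the fraction of time geodesics $\g\in\Gamma$ spend in $\omega$, i.e. by the functional $\ell^T$. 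Because $\chi_\omega$ is discontinuous across $\partial\omega$, the defect measure only detects $\omega$ up to its boundary, which is precisely what produces the two-sided enclosure $\alpha^T\in\left[\tfrac12\ell^T\left(\mathring{\omega}\right),\tfrac12\ell^T\left(\overline{\omega}\right)\right]$ for the high-frequency part $\alpha^T$ of the averaged constant.

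Finally, the existence of $\alpha=\lim_{T\to+\infty}\alpha^T$ I would obtain from a Fekete-type argument: concatenation of geodesic segments makes $T\mapsto T\,\alpha^T$ super-additive, whence $\alpha^T$ converges as $T\to+\infty$. The step I expect to be the main obstacle is the high-frequency analysis: making rigorous the passage from the time-averaged quadratic form to an invariant measure on $S^*M$ and then to the geodesic functional $\ell^T$, handling carefully the boundary $\partial\omega$ (hence the interior/closure gap in the enclosure), and proving that no intermediate-frequency regime can produce a value strictly below the minimum of the two extreme contributions, so that the infimum genuinely equals $\min\left(\tfrac12 g_1\left(\omega\right),\alpha\right)$.
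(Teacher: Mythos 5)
First, a point of comparison you could not have known: the paper does \emph{not} prove this theorem. It is quoted from \cite{5} (Humbert--Privat--Tr\'elat, Comm.\ PDE 2019) purely as motivation for studying the exact value of $\ell^T$, so there is no in-paper proof to measure your attempt against. Judged against the actual argument of \cite{5}, your outline does follow the right broad architecture: spectral expansion of solutions, time-averaging that kills non-resonant cross terms, a low-/high-frequency dichotomy in which the low-frequency regime produces $\frac{1}{2}g_1(\omega)$ via eigenfunction solutions and the high-frequency regime is analysed through flow-invariant (defect) measures, the $\mathring{\omega}$/$\overline{\omega}$ enclosure caused by the discontinuity of $\chi_\omega$ across $\partial\omega$, and a Fekete-type argument for the existence of $\alpha$ --- consistent with the sub/superadditivity of $T\mapsto T\,\ell^T$ that the present paper itself invokes right after the theorem statement.

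That said, the proposal has a genuine gap at its second step: you silently interchange $\lim_{T\to+\infty}$ with the infimum over normalised data. For each \emph{fixed} datum the oscillatory factors $\frac{1}{T}\int_0^T e^{i\left(\lambda_j\pm\lambda_k\right)t}\,\mathrm{d}t$ tend to $0$, but this only yields $\limsup_{T\to+\infty} C_T\left(\omega\right)/T\leqslant \inf(\text{averaged form})$; the existence of the limit (which is part of the statement, not a given) and the matching lower bound need estimates uniform over data, and the minimising data depend on $T$: their spectral mass may drift to high frequencies as $T$ grows, and near-resonant pairs $\lambda_j\neq\lambda_k$ with $\left|\lambda_j-\lambda_k\right|\lesssim 1/T$ (spectral clusters do occur on general closed manifolds) make the off-diagonal terms non-negligible uniformly in $T$. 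Two further steps are flagged but not argued. One: the claim that no intermediate regime beats $\min\left(\frac{1}{2}g_1\left(\omega\right),\alpha\right)$ --- data splitting their mass between a fixed eigenfunction and a high-frequency packet must be shown not to produce a strictly smaller averaged value, and this is exactly where the $\min$ structure comes from. Two: your defect-measure step naturally produces the $T=\infty$ object (an infimum of $\mu(\omega)$-type quantities over measures invariant under the geodesic flow), whereas the theorem asserts the existence of a \emph{finite-time} quantity $\alpha^T\in\left[\frac{1}{2}\ell_T\left(\mathring{\omega}\right),\frac{1}{2}\ell_T\left(\overline{\omega}\right)\right]$ whose limit is then taken; constructing $\alpha^T$ at fixed $T$ and relating it to the invariant measures is a genuine piece of analysis, not a remark. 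So: correct roadmap, essentially the strategy of \cite{5}, but the analytic core --- the uniform limit/infimum exchange, the control of spectral clusters, and the dichotomy that manufactures the $\min$ --- is asserted rather than proved.
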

	
	\begin{remarque}
		Except for pathological examples, the values of $\ell_T\left(\mathring{\omega}\right)$ and $\ell_T\left(\overline{\omega}\right)$ are equal.
	\end{remarque}
	
	Let $d\in\N^*$. In the entire remainder of the article, $M$ will refer to the torus in dimension $d$, $\T^d=\R^d/\Z^d$.

	\subsection*{Origin of the model}
	In this subsection, $d=2$.	
	The article of Hébrard and Humbert (\cite{6}) gives an algorithm to compute explicitly $\ell^T\left(\omega\right)$ when $\omega$ is a finite union of squares (which will be random later), and when $T\to +\infty$.
	
	\begin{center}
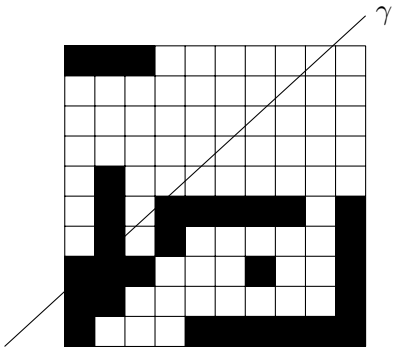
		
		\begin{tikzpicture}[scale=0.4]
			\fill (0,0) rectangle (1,3);
			\fill (1,1) rectangle (2,6);
			\fill (2,2) rectangle (3,3);
			\fill (3,3) rectangle (4,5);
			\fill (4,4) rectangle (8,5);
			\fill (10,5) rectangle (9,1);
			\fill (0,10) rectangle (3,9);
			\fill (4,0) rectangle (10,1);
			\fill (6,2) rectangle (7,3);
			\draw (0,0) grid (10,10);
			\draw (-2,0)--(10,11) node[right]{$\g$};
		\end{tikzpicture}
		\captionof{figure}{Example of $\omega$ (union of black squares).} 
	\end{center}

	The following propositions provide an upper bound for $\ell^T\left(\omega\right)$ for any $T>0$.
	
	\begin{proposition}
		Let $\g\in\Gamma$, dense in the torus $M$, i.e. $\left\{\g\left(t\right) \, \middle| \, t\in [0,+\infty[ \right\}$ is dense in the torus, then, $\m \underset{T \to +\infty}{\longrightarrow} \left|\omega\right|$.
	\end{proposition}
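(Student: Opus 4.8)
The plan is to recognize this statement as an instance of Weyl's equidistribution theorem for the linear flow on the torus, and then to pass from continuous test functions to the discontinuous indicator $\chi_\omega$.

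First I would write the geodesic explicitly. On the flat torus $\T^d$, every geodesic is the projection of a straight line, so $\g(t) = \g(0) + t v \pmod{\Z^d}$ for some direction $v \in \R^d$. The hypothesis that the orbit $\{\g(t) : t \geqslant 0\}$ is dense is equivalent to the rational independence of the components $v_1, \dots, v_d$: if some nonzero $k \in \Z^d$ satisfied $k \cdot v = 0$, then the map $x \mapsto k\cdot x \bmod 1$ would be constant along the orbit, which would then be contained in a proper closed subset and could not be dense; the converse is Kronecker's theorem. So from density I extract that $k \cdot v \neq 0$ for every nonzero $k \in \Z^d$.

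Next I would establish equidistribution on continuous functions by testing on the characters $e_k(x) = e^{2\pi i k \cdot x}$. A direct computation gives $\frac{1}{T} \int_0^T e_k(\g(t))\,dt = e^{2\pi i k\cdot \g(0)}\,\frac{1}{T}\int_0^T e^{2\pi i t (k\cdot v)}\,dt$, which for $k \neq 0$ is $O(1/T)$ since $k\cdot v \neq 0$, hence tends to $0 = \int_{\T^d} e_k\,dx$, while for $k = 0$ it equals $1 = \int_{\T^d} e_0\,dx$. By linearity and density of trigonometric polynomials (Stone--Weierstrass), $\frac{1}{T}\int_0^T f(\g(t))\,dt \longrightarrow \int_{\T^d} f\,dx$ for every continuous $f : \T^d \to \R$; equivalently, the flow is uniquely ergodic with respect to the Lebesgue measure.

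Finally I would treat the indicator $\chi_\omega$, which is the main obstacle since it is not continuous. As $\omega$ is a finite union of cubes, its boundary $\partial\omega$ is a finite union of faces and has zero Lebesgue measure. For $\delta > 0$ I would construct continuous functions $f^-_\delta \leqslant \chi_\omega \leqslant f^+_\delta$ coinciding with $\chi_\omega$ outside a $\delta$-neighborhood of $\partial\omega$, with $\int_{\T^d}(f^+_\delta - f^-_\delta)\,dx \to 0$ as $\delta \to 0$. Applying the previous step to $f^{\pm}_\delta$ and using monotonicity of the time average yields
\begin{equation*}
	\int_{\T^d} f^-_\delta\,dx \leqslant \liminf_{T\to+\infty} m_{\g}^T\left(\omega\right) \leqslant \limsup_{T\to+\infty} m_{\g}^T\left(\omega\right) \leqslant \int_{\T^d} f^+_\delta\,dx,
\end{equation*}
and letting $\delta\to0$ squeezes both bounds onto $|\omega| = \int_{\T^d}\chi_\omega\,dx$, proving the claim. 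The delicate point is precisely this sandwiching: it succeeds because unique ergodicity forces the flow to spend asymptotically negligible time in the shrinking neighborhood of the measure-zero boundary $\partial\omega$.
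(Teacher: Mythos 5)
The paper states this proposition without proof (it is presented as a known fact, in the spirit of \cite{6} and \cite{1}), so there is no in-paper argument to compare against; judged on its own, your proof is correct and is the standard one. Each step checks out: on the flat torus $\g(t)=\g(0)+tv \bmod \Z^d$, and density of the forward orbit does imply $k\cdot v\neq 0$ for every nonzero $k\in\Z^d$ (your contrapositive via $x\mapsto k\cdot x \bmod 1$ is valid, and for the continuous-time flow, unlike for discrete rotations, rational independence of $v_1,\dots,v_d$ alone is the right condition); the Weyl computation gives the $O(1/T)$ decay for $k\neq 0$; Stone--Weierstrass upgrades this to all continuous test functions; and the sandwich $f^-_\delta\leqslant\chi_\omega\leqslant f^+_\delta$ closes the gap because $\partial\omega$ is a finite union of faces of cubes, hence Lebesgue-null, so $\int_{\T^d}\bigl(f^+_\delta-f^-_\delta\bigr)\,dx\to 0$. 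Two minor remarks: your final appeal to unique ergodicity is decorative --- the two-sided bound on $\liminf$ and $\limsup$ already does all the work --- and it is worth noting that your argument only uses $\left|\partial\omega\right|=0$, so it proves the proposition verbatim for any Riemann-integrable $\omega$, which is exactly the level of generality the paper invokes in the proposition that follows it.
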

	
	\begin{proposition}
		If $\omega\subset M$ is Riemann-integrable, then $\underset{T\to +\infty}{\limsup} \; \ell^T\left(\omega\right) \leqslant \left|\omega\right|$.
	\end{proposition}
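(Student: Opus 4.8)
The plan is to use that $\ell^T\left(\omega\right)$ is an infimum over the whole of $\Gamma$, and is therefore bounded above by the time-average $m_\gamma^T\left(\omega\right)$ computed along any single, conveniently chosen geodesic. The natural candidate is a geodesic whose orbit is dense in $\T^d$, since the preceding proposition already describes the large-time behaviour of $m_\gamma^T\left(\omega\right)$ precisely for such geodesics.

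First I would produce one dense geodesic. With the flat metric on $\T^d$, the geodesics are the projections of the affine lines $t\mapsto x_0+tv$ of $\R^d$, and the orbit $\left\{x_0+tv \bmod \Z^d : t\geqslant 0\right\}$ is dense in $\T^d$ as soon as the coordinates of $v$ are linearly independent over $\mathbb{Q}$; by the Kronecker--Weyl theorem such directions exist (indeed they form a set of full measure). Fix such a geodesic $\gamma_0\in\Gamma$. The previous proposition then gives $m_{\gamma_0}^T\left(\omega\right)\underset{T\to+\infty}{\longrightarrow}|\omega|$, while the definition of the infimum yields $\ell^T\left(\omega\right)=\inf_{\gamma\in\Gamma} m_\gamma^T\left(\omega\right)\leqslant m_{\gamma_0}^T\left(\omega\right)$ for every $T>0$. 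Passing to the limit superior and using that the dominating term actually converges gives
\[
\limsup_{T\to+\infty}\ell^T\left(\omega\right)\leqslant \limsup_{T\to+\infty} m_{\gamma_0}^T\left(\omega\right)=\lim_{T\to+\infty}m_{\gamma_0}^T\left(\omega\right)=|\omega|,
\]
which is exactly the asserted bound.

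I do not anticipate a genuine difficulty here: once the previous proposition is available, the statement is a one-line consequence of it combined with the monotonicity of $\limsup$. The two points deserving attention are the existence of a single dense (equivalently, for the linear flow, equidistributing) geodesic on the torus, and the reason the Riemann-integrability hypothesis is needed at all. The latter is where the assumption genuinely enters: the convergence of the time-average $m_{\gamma_0}^T\left(\omega\right)$ to the Lebesgue measure $|\omega|$ requires $\omega$ to be Jordan measurable, so that $\chi_\omega$ can be squeezed between continuous functions whose integrals approximate $|\omega|$ arbitrarily well; for a merely measurable $\omega$ whose boundary has positive measure, the time-average need not converge to $|\omega|$ and the inequality could fail.
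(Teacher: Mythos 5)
Your proof is correct and takes essentially the same route as the paper, which (implicitly, since it gives no written-out proof) derives this proposition from the preceding one by bounding the infimum $\ell^T\left(\omega\right)$ by $m_{\gamma_0}^T\left(\omega\right)$ along a single dense geodesic $\gamma_0$ and passing to the limit. Your added details --- the existence of dense geodesics via directions with rationally independent coordinates, and the observation that Riemann-integrability (Jordan measurability) is exactly what makes the time averages along $\gamma_0$ converge to $\left|\omega\right|$ --- correctly fill in what the paper leaves implicit.
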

	
	And then, for all $ T>0$, $\ell^T\left(\omega\right) \leqslant \left|\omega\right|$.
	This leads to the question that the article \cite{1} answers using a probabilistic method: Have we $\sup\limits_{\left|\omega\right|=\alpha} \ell^T\left(\omega\right) = \alpha$ ?
	
	
	In this article, we are more interested in studying the behaviour of $\ell^T\left(\omega\right)$ when $\omega$ is a random domain than just answering a similar question in the case of a $d$-dimensional torus. Indeed, we are currently studying a method that seems more effective for providing a general answer to this type of question.

	\subsection*{Presentation of the model}
	
	We want to compute the probability that $\ell^T\left(\omega\right)>0$ on the $d$-dimensional torus, where $\omega$ is a randomly constructed subset of $M$. The subset $\omega$ can be seen as a collection of sensors placed randomly on $M$, and 
	$\ell^T\left(\omega\right)>0$ represents the ability of $\omega$ to detect all the geodesics living on $M$.
	To do this, we define $\omega$ according to the following model:
	
	Let $n\in\N^*$, we consider a regular grid $\cG^n=\left(c_{i_1, \dots, i_d}\right)_{1\leqslant i_1, \dots, i_d\leqslant n}$ in the cube $\left[0, 1\right]^d$, consisting of $n^d$ cells. We have $\left[0, 1\right]^d=\bigcup\limits_{i_1, \dots, i_d=1}^n c_{i_1, \dots, i_d}$ where, for every $\left(i_1, \dots, i_d\right)\in \llbracket 1, n\rrbracket^d$, $c_{i_1, \dots, i_d}= \left[\left(i_1-1\right)/n, i_1/n\right]\times \dots \times \left[\left(i_d-1\right)/n, i_d/n\right]$. For every $\left(i_1^\prime, \dots, i_d^\prime\right)\in \Z^d$, we identify the cube $c_{i_1^\prime, \dots, i_d^\prime}$ with the cube $c_{i_1, \dots, i_d}$ of the grid if: for all $j\in \llbracket 1, d \rrbracket, \; i_j\equiv i_j^\prime \left[n\right] $.
	
	\begin{damier}\label{dam}
		Let $\varepsilon\in\left[0,1\right]$. We consider the grid $\cG^n$ initially white. We randomly blacken certain cells of the grid $\cG^n$ according to the following procedure: for every $\left(i_1, \dots, i_d\right)\in \llbracket 0, 1 \rrbracket^d$, we blacken the cell $c_{i_1, \dots, i_d}$ of the grid with probability $\varepsilon$. All choices are assumed to be mutually independent. In other words, we select the cells to blacken in the grid $\cG^n$ by considering $n^d$ independent Bernoulli random variables denoted $\left(X_{i_1, \dots, i_d}\right)_{1\leqslant i_1, \dots, i_d\leqslant n}$, with parameter $\varepsilon$. We denote by $\omega_\varepsilon^n$ the subset of $\left[0,1\right]^d$, defined as the union of all randomly blackened cells.
	\end{damier}

	\begin{center}
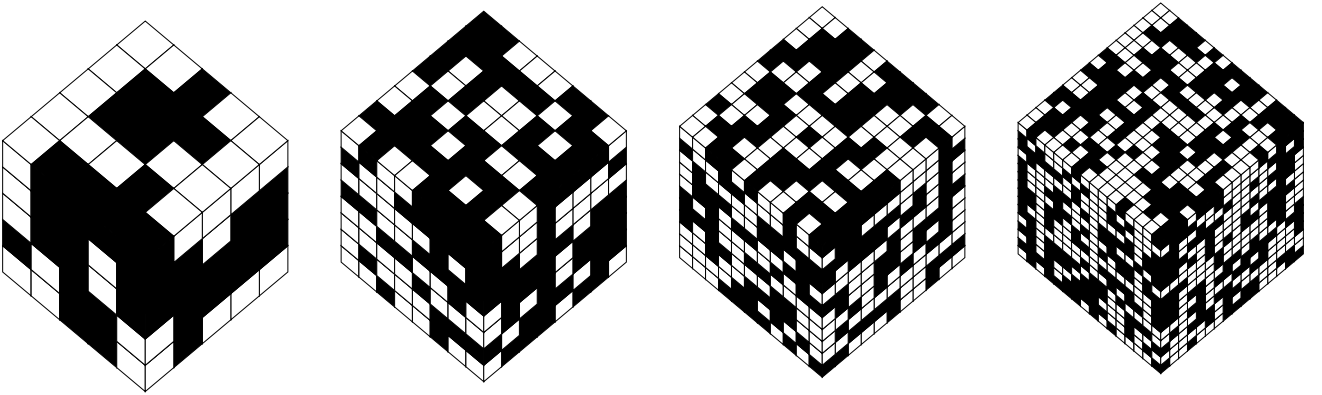

		\begin{tikzpicture}[x=(220:0.5cm), y=(-40:0.5cm), z=(90:0.353cm), scale=0.45]
			\foreach \k in {0,1,...,10} {\foreach \l in {0,1,...,10} {\foreach \n in {0,1,...,10} {\pgfmathsetmacro{\x}{random(0,1)} 
						\ifthenelse{\x<1}{\elemcube[black]{\n}{\l}{\k}}{\elemcube{\n}{\l}{\k}}}}}
			\begin{scope}[xshift=-10cm, scale=1.375]
				\foreach \k in {0,1,...,7} {\foreach \l in {0,1,...,7} {\foreach \n in {0,1,...,7} {\pgfmathsetmacro{\x}{random(0,1)} 
							\ifthenelse{\x<1}{\elemcube[black]{\n}{\l}{\k}}{\elemcube{\n}{\l}{\k}}}}}
			\end{scope}
			\begin{scope}[xshift=10cm, scale=0.6875]
				\foreach \k in {0,1,...,15} {\foreach \l in {0,1,...,15} {\foreach \n in {0,1,...,15} {\pgfmathsetmacro{\x}{random(0,1)} 
							\ifthenelse{\x<1}{\elemcube[black]{\n}{\l}{\k}}{\elemcube{\n}{\l}{\k}}}}}
			\end{scope}
			\begin{scope}[xshift=-20cm, scale=2.2]
				\foreach \k in {0,1,...,4} {\foreach \l in {0,1,...,4} {\foreach \n in {0,1,...,4} {\pgfmathsetmacro{\x}{random(0,1)} 
							\ifthenelse{\x<1}{\elemcube[black]{\n}{\l}{\k}}{\elemcube{\n}{\l}{\k}}}}}
			\end{scope}
		\end{tikzpicture}
		\captionof{figure}{Examples of random checkerboards in dimension $d=3$. $\omega_\varepsilon^n$ is the union of the black cells. From left to right : $n=5$, $n=8$, $n=11$, $n=16$, and $\varepsilon=\frac{1}{2}$.}
	\end{center}
	
	Let $\g\in \Gamma$, for every $\left(i_1, \dots, i_d\right)\in\left\{1, \dots, n\right\}^d$, we denote by $t_{i_1, \dots, i_d}\left(\g\right)$ the time spent by $\g$ in the cell $c_{i_1, \dots, i_d}$ of the grid $\cG^n$. We have $\sum\limits_{i_1, \dots, i_d=1}^n t_{i_1, \dots, i_d}\left(\g\right)=T$. Moreover, $\g$ can traverse the cell $c_{i_1, \dots, i_d}$ at most $T+1$ times, and within this cell, the maximum time spent by a geodesic during one passage is $\frac{\sqrt{d}}{n}$. Thus, we have:
	
	\begin{equation}
		\forall \left(i_1, \dots, i_d\right)\in\left\{1, \dots, n\right\}^d, \quad \frac{t_{i_1, \dots, i_d}\left(\g\right)}{T}\leqslant \frac{T+1}{T}\frac{\sqrt{d}}{n} \; ; \quad \text{and} \quad \sum\limits_{i_1, \dots, i_d=1}^n \frac{t_{i_1, \dots, i_d}\left(\g\right)}{T}=1 .
	\end{equation}

	\subsection*{Main result}

	\begin{theoreme}\label{th}
		Let $T>0$, let $\varepsilon \in [0,1]$. The random variable $\ell^T(\omega_\varepsilon^n)$ converges in probability to $\varepsilon$ as $n\to +\infty$. In other words, for any $\delta>0$, $\underset{n \to +\infty}{\lim}\mathbb{P}\left(\left|\ell^T\left(\omega_\varepsilon^n\right)-\varepsilon\right|\geqslant\delta\right)=0$.
	\end{theoreme}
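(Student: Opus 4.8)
The plan is to prove the two one-sided estimates $\P(\ell^T(\omega_\varepsilon^n)\ge\varepsilon+\delta)\to 0$ and $\P(\ell^T(\omega_\varepsilon^n)\le\varepsilon-\delta)\to 0$ separately. The first (upper) estimate is essentially free: since $\omega_\varepsilon^n$ is a finite union of cubes, hence Riemann-integrable, the inequality $\ell^T(\omega)\le|\omega|$ recalled above gives $\ell^T(\omega_\varepsilon^n)\le|\omega_\varepsilon^n|$. But $|\omega_\varepsilon^n|=n^{-d}\sum_{i_1,\dots,i_d}X_{i_1,\dots,i_d}$ is an average of $n^d$ i.i.d. Bernoulli($\varepsilon$) variables, so by the law of large numbers (or Chebyshev, its variance being $\varepsilon(1-\varepsilon)n^{-d}$) it converges to $\varepsilon$ in probability; thus $\P(\ell^T(\omega_\varepsilon^n)\ge\varepsilon+\delta)\le\P(|\omega_\varepsilon^n|\ge\varepsilon+\delta)\to 0$.

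The whole difficulty lies in the lower estimate, i.e. in showing that with high probability \emph{every} geodesic spends at least a fraction $\varepsilon-\delta$ of its time in $\omega_\varepsilon^n$. First I would treat a single fixed geodesic $\g$. Writing $w_c=t_c(\g)/T$ for the fraction of time spent in a cell $c$, one has $m_\g^T(\omega_\varepsilon^n)=\sum_c w_c X_c$, where $\sum_c w_c=1$, the $X_c$ are i.i.d. Bernoulli($\varepsilon$), and, crucially, the structural bound recalled in the presentation of the model gives $\max_c w_c\le\frac{T+1}{T}\frac{\sqrt d}{n}$. Hence $\sum_c w_c^2\le\max_c w_c\le\frac{T+1}{T}\frac{\sqrt d}{n}$, and since $\E[m_\g^T]=\varepsilon$, Hoeffding's inequality for weighted sums of independent bounded variables yields $\P(m_\g^T(\omega_\varepsilon^n)\le\varepsilon-\delta)\le\exp(-\kappa n\delta^2)$ with $\kappa=\kappa(T,d)>0$. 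The decisive point is that the exponent is linear in $n$ and uniform over all geodesics, including degenerate ones, precisely because no single cell can carry more than an $O(1/n)$ fraction of the time.

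To pass from one geodesic to all geodesics I would use a covering (net) argument. Parametrising a geodesic by $(x_0,v)\in\T^d\times S^{d-1}$ via $\g(t)=x_0+tv$, I would build a finite net of initial data whose cardinality is polynomial in $n$, and close the argument by a union bound, since $\exp(-\kappa n\delta^2)$ beats any power of $n$. What makes the net effective is a deterministic stability estimate, uniform in the random configuration: if $\sup_{t\in[0,T]}|\g(t)-\g^\prime(t)|\le\rho$, then $|m_\g^T(\omega)-m_{\g^\prime}^T(\omega)|$ is bounded by $T^{-1}$ times the time $\g$ spends within distance $\rho$ of the grid faces $\partial\omega\subset\bigcup_{j,k}\{x_j=k/n\}$, because the two integrands can differ only there. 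Counting crossings of the grid hyperplanes (a transverse geodesic crosses the $j$-th family at rate $\approx n|v_j|$, spending time $\approx\rho/|v_j|$ near each), this time near the faces is $\lesssim n\rho$, so a position mesh $\rho\sim\delta/n$ and a direction mesh $\sim\delta/(Tn)$ make it $\le\delta/2$; the net then has size of order $n^{2d-1}$ up to constants depending on $T,d,\delta$.

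\textbf{The main obstacle} is making this stability estimate genuinely uniform over all directions, since the crossing count degrades for directions $v$ nearly parallel to a coordinate hyperplane (small $|v_j|$): such a geodesic can stay within distance $\rho$ of a single $j$-face for a time of order $\rho/|v_j|$, so the naive bound blows up. I would resolve this by an anisotropic net, whose mesh in each coordinate is proportional to $|v_j|$ so as to keep the term $\rho/|v_j|$ under control, combined with a stratification of the direction sphere according to which components $v_j$ are too small to produce enough crossings. In the corresponding ``slow'' coordinates the geodesic is confined to $O(1)$ layers of cells, so it is effectively a lower-dimensional geodesic: the same concentration estimate still applies (through the transverse coordinates it meets $\gtrsim n$ distinct cells), while only a strictly lower-dimensional net is required. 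The delicate point, which I expect to absorb most of the technical work, is the intermediate regime of small but nonzero $|v_j|$, where the geodesic crosses only finitely many $j$-faces and one must reconcile the continuous net with the discrete enumeration of layer schedules. Summing the union bounds over the finitely many strata then gives $\P(\ell^T(\omega_\varepsilon^n)\le\varepsilon-\delta)\to 0$, and together with the upper estimate this proves the convergence in probability.
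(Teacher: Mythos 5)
Your overall architecture --- an exponential concentration bound for a single fixed geodesic, a reduction of the continuum of geodesics to a family of polynomial cardinality in $n$, and a union bound --- is exactly the skeleton of the paper's proof of the lower estimate, and several of your steps are sound. The upper estimate via $\ell^T(\omega_\varepsilon^n)\leqslant|\omega_\varepsilon^n|$ and the law of large numbers is correct, and in fact slightly more economical than the paper's argument, which instead applies Chebyshev's inequality to $m^T_\gamma(\omega_\varepsilon^n)$ for one fixed geodesic using the weight bound $t_{i_1,\dots,i_d}(\gamma)/T\leqslant\frac{T+1}{T}\frac{\sqrt d}{n}$. Your single-geodesic Hoeffding bound with exponent linear in $n$ is also correct and plays the role of the paper's Proposition \ref{gd} together with Lemma \ref{gdgp}, applied there to one representative $\gamma_p$ of each class.

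The genuine gap is in the reduction step, precisely where you flag it, and it is not merely technical residue: it is the heart of the problem. Your net argument needs a \emph{deterministic}, configuration-uniform stability estimate, and this estimate is simply false near the axes: if $|v_j|\lesssim\rho/T$, a geodesic can spend the entire interval $[0,T]$ within distance $\rho$ of a single type-$j$ hyperplane, so the bound ``time near faces $\lesssim n\rho T$'' fails, while the anisotropic repair $\rho_j\sim\delta T|v_j|$ makes the net cardinality blow up as $|v_j|\to 0$. Your proposed stratification (freeze the slow coordinates, enumerate ``layer schedules'') points in a workable direction, but the assertions carrying it --- that the geodesic is then ``effectively lower-dimensional'' and that ``the same concentration estimate still applies'' with ``only a strictly lower-dimensional net'' --- are not proved and do not follow from a metric net of any dimension: in the intermediate regime the geodesic crosses a small positive number of type-$j$ faces at arbitrary times, so the visited cell sequence is combinatorial data that no continuous net captures. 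You would have to build a hybrid net recording, per slow coordinate, which faces are crossed and discretized crossing times, verify the deterministic stability bound for this non-metric matching, and check that the enumeration remains polynomial in $n$ --- none of which is done. It is instructive that the paper avoids this entirely by making the combinatorics primary: it partitions $\hat\Gamma$ into classes $\hat\Gamma_p$ of geodesics meeting the same cells in the same order, proves that crossing times within a class are automatically rigid up to $4dT/N_p$ (Lemma \ref{majotmt}, a consequence of the arithmetic-progression structure $t^i_j=t^i_0+j\alpha^i$), treats hyperplane types with few crossings deterministically (Lemma \ref{petitki}) and types with many crossings by a \emph{second} large-deviation argument (Lemma \ref{grandki}) --- so even its within-class stability is probabilistic rather than deterministic --- and obtains the polynomial class count (Lemma \ref{ppoly}) from a separate combinatorial fact, the number of ways a line can strictly separate $n^2$ points in the plane (Lemma \ref{sepnpt}). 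Until you carry out the analogous construction and counting in the degenerate-direction regime, your proposal does not establish the lower estimate.
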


\begin{remarque}
	The chosen model is particularly suitable for the torus, and the mathematical problem it poses is interesting. However, if one were to attempt to extend the result to other domains, potentially less regular, such a model may not generalize well. Another model using a random domain constructed via a Poisson point process could provide a more generalizable approach. This will be the subject of future research.
\end{remarque}
	
	To prove this theorem, we need to show that for any $T>0$ and any $\varepsilon\in[0,1]$, we have, for any $\delta>0$:
	\begin{enumerate}[label=(\roman*)]
		\item $\underset{n \to +\infty}{\lim}\mathbb{P}\left(\ell^T\left(\omega_\varepsilon^n\right)\geqslant\varepsilon+\delta\right)=0$;\label{thi}
		\item $\underset{n \to +\infty}{\lim}\mathbb{P}\left(\ell^T\left(\omega_\varepsilon^n\right)\leqslant\varepsilon-\delta\right)=0$.\label{thii}
	\end{enumerate}
	
	Furthermore, for each $T^\prime>0$, $\ell^{T^\prime}\left(\omega_\varepsilon^n\right) \leqslant \underset{T \to +\infty}{\lim} \ell^T\left(\omega_\varepsilon^n\right)$ (\cite{5}: the mapping $T\mapsto\ell^T\left(\omega_\varepsilon^n\right)$ is nonnegative, is bounded above by $1$ and is subadditive). Thus, the result remains true as $T \to + \infty$. From this, we deduce the following corollary:
	
	\begin{corollaire}
		Let $\varepsilon \in [0,1]$. For any $\delta>0$, $\underset{n \to +\infty}{\lim}\mathbb{P}\left(\left|\underset{T \to +\infty}{\lim} \ell^T(\omega_\varepsilon^n)-\varepsilon\right|\geqslant\delta\right)=0$.
	\end{corollaire}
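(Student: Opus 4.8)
The plan is to deduce the corollary from Theorem \ref{th} by a monotone sandwich, so that essentially all the work is already carried by the theorem and the only genuinely new ingredient is the passage $T\to+\infty$. Throughout I write $\ell^\infty(\omega):=\lim_{T\to+\infty}\ell^T(\omega)$, which exists because, as recalled above following \cite{5}, the map $T\mapsto\ell^T(\omega)$ is nonnegative, bounded by $1$, and subadditive, and which satisfies $\ell^{T'}(\omega)\leqslant\ell^\infty(\omega)$ for every $T'>0$. Fixing $\delta>0$, I would bound $\P\left(\ell^\infty(\omega_\varepsilon^n)\geqslant\varepsilon+\delta\right)$ and $\P\left(\ell^\infty(\omega_\varepsilon^n)\leqslant\varepsilon-\delta\right)$ separately.

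For the upper tail I would use the deterministic a priori bound $\ell^T(\omega)\leqslant|\omega|$, valid for every $T>0$ (established above). Letting $T\to+\infty$ gives $\ell^\infty(\omega_\varepsilon^n)\leqslant|\omega_\varepsilon^n|$ almost surely. Since $|\omega_\varepsilon^n|=n^{-d}\sum_{i_1,\dots,i_d}X_{i_1,\dots,i_d}$ is an average of $n^d$ independent Bernoulli($\varepsilon$) variables, it has mean $\varepsilon$ and variance $\varepsilon(1-\varepsilon)n^{-d}\to0$, so Chebyshev's inequality gives $|\omega_\varepsilon^n|\to\varepsilon$ in probability; hence $\P\left(\ell^\infty(\omega_\varepsilon^n)\geqslant\varepsilon+\delta\right)\leqslant\P\left(|\omega_\varepsilon^n|\geqslant\varepsilon+\delta\right)\to0$. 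For the lower tail I would fix an arbitrary $T'>0$ and exploit the monotonicity $\ell^\infty(\omega_\varepsilon^n)\geqslant\ell^{T'}(\omega_\varepsilon^n)$, which yields the inclusion $\{\ell^\infty(\omega_\varepsilon^n)\leqslant\varepsilon-\delta\}\subseteq\{\ell^{T'}(\omega_\varepsilon^n)\leqslant\varepsilon-\delta\}$ and therefore $\P\left(\ell^\infty(\omega_\varepsilon^n)\leqslant\varepsilon-\delta\right)\leqslant\P\left(\ell^{T'}(\omega_\varepsilon^n)\leqslant\varepsilon-\delta\right)$, whose right-hand side tends to $0$ by Theorem \ref{th} (precisely its part \ref{thii}). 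Combining the two tails gives $\P\left(|\ell^\infty(\omega_\varepsilon^n)-\varepsilon|\geqslant\delta\right)\to0$, which is the claim.

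Thus the corollary is soft once Theorem \ref{th} is available, and the genuine obstacle lies entirely in the lower bound \ref{thii} of the theorem on which the lower tail rests. To establish that, for a fixed geodesic $\g$ I would note that $m_\g^T(\omega_\varepsilon^n)=\sum_{i_1,\dots,i_d}X_{i_1,\dots,i_d}\,t_{i_1,\dots,i_d}(\g)/T$ is a weighted sum of independent Bernoulli variables with mean $\varepsilon$ and weights bounded by $\tfrac{T+1}{T}\tfrac{\sqrt d}{n}=O(1/n)$, so Hoeffding's inequality gives $\P\left(m_\g^T(\omega_\varepsilon^n)\leqslant\varepsilon-\delta\right)\leqslant e^{-cn}$ for some $c=c(\delta,T,d)>0$. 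The difficulty is that $\ell^T=\inf_\g m_\g^T$ is an infimum over the uncountable family $\Gamma$, parametrized by $(x_0,v)\in\T^d\times S^{d-1}$. I would control it by a union bound over a finite net: the map $\g\mapsto m_\g^T(\omega)$ is upper semicontinuous (the indicator of the closed set $\omega$ is upper semicontinuous), with a quantitative modulus governed by the time a geodesic spends inside an $\eta$-tube of the grid skeleton, so a net of scale $\eta\sim\delta/n^2$ has only polynomially many points while any minimizing geodesic is approximated from above by a net point up to $\delta/2$. The exponential factor $e^{-cn}$ then beats the polynomial net size.

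The delicate point, and the real crux, is the case of directions nearly tangent to the grid faces, for which the tube-time — and hence the modulus — degenerates. These I would treat separately: a geodesic with some velocity component below $1/(nT)$ varies by less than $1/n$ in that coordinate over $[0,T]$, so it remains within $O(1)$ consecutive cell-layers and reduces to the same statement on the $(d-1)$-dimensional checkerboard of a layer, which is blackened again by independent Bernoulli($\varepsilon$) variables. Treating the transversal directions by the net and the near-tangent directions by this reduction, the full lower bound \ref{thii} — and with it the corollary — follows by induction on $d$, the base case $d=1$ being a direct computation on the circle.
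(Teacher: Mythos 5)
Your deduction of the corollary itself is correct and essentially the paper's: the paper records exactly your monotonicity inequality $\ell^{T'}(\omega_\varepsilon^n)\leqslant\lim_{T\to+\infty}\ell^T(\omega_\varepsilon^n)$ (via subadditivity, citing \cite{5}) and declares that the result ``remains true as $T\to+\infty$''. Your upper-tail argument — $\lim_{T\to+\infty}\ell^T(\omega_\varepsilon^n)\leqslant|\omega_\varepsilon^n|$ followed by Chebyshev on $|\omega_\varepsilon^n|=n^{-d}\sum X_{i_1,\dots,i_d}$ — makes explicit what the paper leaves implicit, and it is the right completion: the monotonicity inequality alone points the wrong way for the event $\{\lim_T\ell^T\geqslant\varepsilon+\delta\}$, so the earlier propositions (stated in the $d=2$ subsection but valid on $\T^d$ since $\omega_\varepsilon^n$ is Riemann-integrable) are genuinely needed here.

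Where you diverge from the paper is in your proposed proof of \ref{thii}, which the corollary rests on. The paper builds no metric net in the space of geodesics: it partitions $\G$ into classes $\G_p$ of geodesics crossing the same cells in the same order, controls $\left|\m-\mp\right|$ within a class through the rigidity $\tij=\tio+j\ai$ (Thales) and Proposition \ref{gd} applied per hyperplane type, and bounds $\#\cP_d$ polynomially by the combinatorial Lemma \ref{sepnpt} (counting line-separations of $n^2$ points). Its dichotomy between hyperplane types met fewer or more than $\sqrt{N_p}-1$ times plays the role of your transversal/near-tangent split, but purely combinatorially — no modulus of continuity, no induction on $d$, and tangent directions are disposed of once and for all via $\Gamma_H$ (Lemma \ref{restg}). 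Your route, if completed, would buy a better exponent ($e^{-cn}$ versus the paper's $e^{-c\sqrt{nT}}$ from the within-class comparison), at the price of handling the degenerate directions analytically. Your quantitative tube-time modulus for transversal geodesics is sound and does the real work; the appeal to upper semicontinuity is a red herring (and note $\g\mapsto\m$ is not even continuous near $\Gamma_H$, as the paper's counterexample shows).

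The genuine gap is in the near-tangent reduction. What your union bound requires in dimension $d-1$ is not ``the same statement'': the event $\{\inf\m\leqslant\varepsilon-\delta\}$ over near-tangent geodesics involves simultaneously all $n$ layers, a geodesic-dependent split $T=T_1+T_2$ of the horizon (with $T_1$ possibly arbitrarily small, in which regime $\inf_{\tilde\g}m^{T_1}_{\tilde\g}$ is deterministically $0$), and a speed reparametrization of the projected geodesic. A limit-in-probability statement cannot be union-bounded over $\sim n$ layers, so the induction hypothesis must be strengthened to an explicit $\mathrm{poly}(n)\,e^{-cn}$ bound, uniform over all horizons bounded below (say $T'\geqslant\delta T/4$), with the split time discretized, the short piece discarded against a $\delta/4$ of slack, and the reduction iterated when several velocity components are small. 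All of this is fixable by standard bookkeeping, but none of it is in the sketch, and ``reduces to the same statement on the $(d-1)$-dimensional checkerboard of a layer'' would fail as written.
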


	\section{Proof of \ref{thi}}

	The difficulty of proving Theorem \ref{th} lies in proving \ref{thii}.
	The proof of \ref{thi} is straightforward, and here are the details:  
	
	\begin{proof}[Proof of \ref{thi}]
		Let $\g\in \Gamma$. We have
		\begin{equation}\label{summ}
			\ell^T\left(\omega_\varepsilon^n\right)\leqslant \m= \sum\limits_{i_1, \dots, i_d=1}^n \frac{t_{i_1, \dots, i_d}\left(\g\right)}{T} X_{i_1, \dots, i_d}.
		\end{equation}
		Thus, $\P\left(\ell^T\left(\omega_\varepsilon^n\right)\geqslant\varepsilon+\delta\right)$ $\leqslant \P\left(\m \geqslant\varepsilon+\delta\right) \leqslant \P\left(\left|\m-\varepsilon\right|\geqslant\delta\right)$. Then, by applying Tchebychev's inequality to the random variable $\m$, we obtain:
		\begin{equation*}
			\P\left(\left|\m-\varepsilon\right|\geqslant\delta\right)\leqslant\frac{\text{Var}\left(\m\right)}{\delta^2}\leqslant \frac{1}{\delta^2}\frac{T+1}{T}\frac{\varepsilon\left(1-\varepsilon\right)\sqrt{d}}{n}.
		\end{equation*}
		Indeed, $\text{Var}\left(\m\right)= \sum\limits_{i_1, \dots, i_d=1}^n \left(\frac{t_{i_1, \dots, i_d}\left(\g\right)}{T}\right)^2 \text{Var}\left( X_{i_1, \dots, i_d}\right)$, due to the independence of $\left(X_{i_1, \dots, i_d}\right)_{1\leqslant i_1, \dots, i_d\leqslant n}$. Moreover, for any  $\left(i_1, \dots, i_d\right)\in\left\{1, \dots, n\right\}^d$, $\text{Var}\left(X_{i_1, \dots, i_d}\right)=\varepsilon\left(1-\varepsilon\right)$. Hence,
		\begin{equation*}
			\text{Var}\left(\m\right)=\varepsilon\left(1-\varepsilon\right) \sum\limits_{i_1, \dots, i_d=1}^n \left(\frac{t_{i_1, \dots, i_d}\left(\g\right)}{T}\right)^2.
		\end{equation*}
		Now, for any $\left(i_1, \dots, i_d\right)\in\left\{1, \dots, n\right\}^d$, $\frac{t_{i_1, \dots, i_d}\left(\g\right)}{T}\leqslant \frac{T+1}{T}\frac{\sqrt{d}}{n}$, and $\sum\limits_{i_1, \dots, i_d=1}^n \frac{t_{i_1, \dots, i_d}\left(\g\right)}{T}=1$. Thus,
		\begin{equation*}
			\text{Var}\left(\m\right)\leqslant\varepsilon\left(1-\varepsilon\right)\frac{T+1}{T}\frac{\sqrt{d}}{n}.
		\end{equation*}
		Hence the result.
	\end{proof}
	
	\section{Outline of the proof of \ref{thii}}
	
	
	
	In the proof of Theorem \ref{th}, we use a large deviation result (Proposition \ref{gd} in the following) proven in \cite{1} (Section 3.3, Proposition 1), where the difference with the classical large deviation result is that the constants $\lambda_i$ are not necessary equal to $\frac{1}{m}$.
	
	\begin{proposition}\label{gd}
		Let $m\geqslant 3$ be an integer, and let $\left(\lambda_1,\dots,\lambda_m\right)$ be an $m$-tuple of positive real numbers satisfying $\sum\limits_{i=1}^{m}\lambda_i=1$. Assume there exists $c>1$ such that for all $i$, $\lambda_i\leqslant \frac{c}{m}$. We define $Y_m=\sum\limits_{i=1}^{m}\lambda_i X_i$, where $\left(X_1,\dots,X_m\right)$ is an $m$-tuple of independent and identically distributed random variables with a common expectation $\varepsilon\in\left[0,1\right]$. Then, for any $\delta>0$, there exists $C_{\varepsilon,\delta}>0$ such that 
		\begin{equation*}
			\P\left(|Y_m-\varepsilon|\geqslant \delta\right)\leqslant C_{\varepsilon,\delta}\exp\left(-\delta^2\frac{m}{c}\right).
		\end{equation*} 
	\end{proposition}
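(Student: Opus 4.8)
The plan is to apply the exponential Chebyshev (Chernoff) inequality to the centered weighted sum $Y_m-\varepsilon$ and to bound its moment generating function using the boundedness of the $X_i$ — in the intended application these are Bernoulli variables, hence valued in $[0,1]$ — together with the constraint $\lambda_i\leq c/m$. First I would reduce to a single tail via the union bound,
\[
\P\left(|Y_m-\varepsilon|\geq\delta\right)\leq \P\left(Y_m-\varepsilon\geq\delta\right)+\P\left(\varepsilon-Y_m\geq\delta\right),
\]
the lower tail being the image of the upper one under $X_i\mapsto 1-X_i$ (which is again $[0,1]$-valued, now with mean $1-\varepsilon$), so that it suffices to estimate the upper tail.

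For the upper tail and any $s>0$, independence gives
\[
\P\left(Y_m-\varepsilon\geq\delta\right)\leq e^{-s\delta}\,\E\!\left[e^{s(Y_m-\varepsilon)}\right]=e^{-s\delta}\prod_{i=1}^{m}\E\!\left[e^{s\lambda_i(X_i-\varepsilon)}\right].
\]
The key estimate is Hoeffding's lemma: each $\lambda_i(X_i-\varepsilon)$ has zero mean and lies in an interval of length $\lambda_i$ (since $X_i-\varepsilon\in[-\varepsilon,\,1-\varepsilon]$), whence $\E[e^{s\lambda_i(X_i-\varepsilon)}]\leq\exp(s^2\lambda_i^2/8)$. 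Multiplying, $\E[e^{s(Y_m-\varepsilon)}]\leq\exp\!\big(\tfrac{s^2}{8}\sum_{i=1}^m\lambda_i^2\big)$.

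The hypothesis on the weights enters precisely at the control of $\sum_i\lambda_i^2$: since $\sum_i\lambda_i=1$ and $\lambda_i\leq c/m$, one has $\sum_i\lambda_i^2\leq(\max_i\lambda_i)\sum_i\lambda_i\leq c/m$. Therefore $\P(Y_m-\varepsilon\geq\delta)\leq\exp\!\big(-s\delta+\tfrac{s^2 c}{8m}\big)$, and optimizing in $s$ (the minimizer is $s=4m\delta/c$) yields $\exp(-2\delta^2 m/c)$. The same bound holds for the lower tail, so
\[
\P\left(|Y_m-\varepsilon|\geq\delta\right)\leq 2\exp\!\left(-2\delta^2\tfrac{m}{c}\right)\leq 2\exp\!\left(-\delta^2\tfrac{m}{c}\right),
\]
which is the claim, with $C_{\varepsilon,\delta}=2$ (in fact independent of $\varepsilon$ and $\delta$).

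I expect the only real subtlety to be the boundedness of the $X_i$: it is what makes the sub-Gaussian estimate of Hoeffding's lemma available, and no such tail bound can hold for arbitrary i.i.d. variables of prescribed mean $\varepsilon$, so this hypothesis (implicit in the Bernoulli model) must be used. By contrast, the feature distinguishing the statement from the classical equal-weight large deviation bound — the unequal weights $\lambda_i$ — causes no difficulty here, since Hoeffding's lemma applies term by term irrespective of the weights and the constraint $\lambda_i\leq c/m$ enters only through the clean inequality $\sum_i\lambda_i^2\leq c/m$, which plays the role that $1/m$ plays in the equal-weight case. If one wished to recover the sharp Cram\'er rate (and a constant genuinely depending on $\varepsilon$ and $\delta$), one could instead bound the Bernoulli transform directly via $1-\varepsilon+\varepsilon e^{u}\leq\exp(\varepsilon(e^{u}-1))$ and run a Bernstein-type optimization, reinstating the variance factor $\varepsilon(1-\varepsilon)$; the cruder Hoeffding route above already suffices for the stated inequality.
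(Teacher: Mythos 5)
Your proof is correct, but there is no in-paper argument to compare it with: the paper does not prove Proposition \ref{gd}, it imports it from \cite{1} (Section 3.3, Proposition 1), where it is established by a Chernoff-type computation on the moment generating function of the Bernoulli variables themselves -- which is why the constant there carries the dependence $C_{\varepsilon,\delta}$. Your route is genuinely different and, if anything, cleaner: Hoeffding's lemma applied term by term to the zero-mean variables $\lambda_i(X_i-\varepsilon)$, each supported in an interval of length $\lambda_i$, with the weight hypothesis entering only through $\sum_i\lambda_i^2\leqslant(\max_i\lambda_i)\sum_i\lambda_i\leqslant c/m$. All the steps check out (the optimization $s=4m\delta/c$ does give exponent $-2\delta^2 m/c$), so you obtain the strictly stronger bound $2\exp(-2\delta^2 m/c)$ with a universal constant $2$, independent of $\varepsilon$ and $\delta$; your argument also makes visible that the hypotheses $m\geqslant 3$ and $c>1$ are not actually needed. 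Your caveat about boundedness is exactly right and is the one point that must be made explicit: as literally stated (i.i.d.\ with common expectation $\varepsilon$ and nothing more), the proposition is false -- for instance, two-point laws at $0$ and $K$ with mean $\varepsilon$ violate any bound of the form $C_{\varepsilon,\delta}\exp(-\delta^2 m/c)$ once $K$ is large, since the tail then decays no faster than $\exp(-\delta^2 m/(2\varepsilon K))$ -- so the hypothesis $X_i\in[0,1]$, implicit in the paper's Bernoulli model, is genuinely needed and you were right to supply it. Note finally that your proof covers every use the paper makes of the proposition, including the applications in Lemma \ref{grandki} to the centered variables $X_{ij}-\varepsilon$: these take values in $[-\varepsilon,1-\varepsilon]$, an interval of length $1$, and Hoeffding's lemma depends only on the interval length, not its location.
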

	
	The structure of the proof of Theorem \ref{th} is inspired by \cite{1}: we restrict the study of $\m$ to a polynomial subfamily of geodesics $\g$. 
	However, both the subfamily of geodesics and the arguments allowing the study of $\ell^T\left(\omega_{\varepsilon}^n\right)$ are very different from what is presented in \cite{1}.
	If we were to reuse the same arguments as in the proof of \cite{1}, we would quickly encounter difficulties in adapting them to higher dimensions. The following paragraph gives the main steps of the proof of Theorem \ref{th}, and the next one compares it to the proof in dimension $2$ (\cite{1}).
	
		\subsection{Sketch of the proof of \ref{thii}}
	
	The main idea of the proof of \ref{thii} is to decompose the sum over all cells in the definition of $\m$ (see \eqref{summ}) into $d$ sums according to the types of hyperplanes (we will say that a hyperplane is of type $i \in \N$ if it is orthogonal to the $i$-th vector of the canonical basis of $\R^d$, see Figure \ref{hyp}). 
	\begin{center}
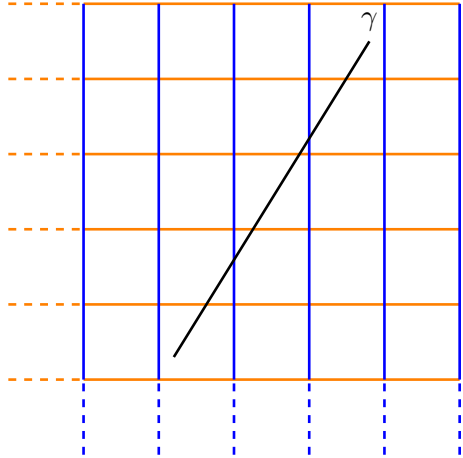

		\begin{tikzpicture}
			\draw[orange][line width=1pt] (0,1) -- (5,1);
			\draw[orange, dashed][line width=1pt] (-1,1) -- (0,1);
			\draw[orange][line width=1pt] (0,2) -- (5,2);
			\draw[orange, dashed][line width=1pt] (-1,2) -- (0,2);
			\draw[orange][line width=1pt] (0,3) -- (5,3);
			\draw[orange, dashed][line width=1pt] (-1,3) -- (0,3);
			\draw[orange][line width=1pt] (0,4) -- (5,4);
			\draw[orange, dashed][line width=1pt] (-1,4) -- (0,4);
			\draw[orange][line width=1pt] (0,0) -- (5,0);
			\draw[orange, dashed][line width=1pt] (-1,0) -- (0,0);
			\draw[orange][line width=1pt] (0,5) -- (5,5);
			\draw[orange, dashed][line width=1pt] (-1,5) -- (0,5);
			\draw[blue][line width=1pt] (2,0) -- (2,5);
			\draw[blue, dashed][line width=1pt] (2,-1) -- (2,0);
			\draw[blue][line width=1pt] (3,0) -- (3,5);
			\draw[blue, dashed][line width=1pt] (3,-1) -- (3,0);
			\draw[blue][line width=1pt] (1,0) -- (1,5);
			\draw[blue, dashed][line width=1pt] (1,-1) -- (1,0);
			\draw[blue][line width=1pt] (4,0) -- (4,5);
			\draw[blue, dashed][line width=1pt] (4,-1) -- (4,0);
			\draw[blue][line width=1pt] (0,0) -- (0,5);
			\draw[blue, dashed][line width=1pt] (0,-1) -- (0,0);
			\draw[blue][line width=1pt] (5,0) -- (5,5);
			\draw[blue, dashed][line width=1pt] (5,-1) -- (5,0);
			\draw[line width=1pt] (1.2,0.3) -- (3.8,4.5) node[above]{$\g$};
		\end{tikzpicture}
		\captionof{figure}{In dimension $d=2$: the hyperplanes of type $1$ are represented in blue (vertical), those of type $2$ in orange (horizontal).}\label{hyp}
	\end{center}
	Indeed, the key property, based on Thales' theorem, is that between two consecutive hyperplanes of the same type, a geodesic spends a constant amount of time.
	We want to evaluate the infimum of $\m$ over the set of geodesic rays: 
	
	\begin{enumerate}
		\item We start by excluding the geodesics in $\Gamma$ that are completely contained within a hyperplane $\Gamma_H$ in order to work on a subset of $\Gamma$ where $\g\mapsto\m$ is continuous.
		\item We partition the set of geodesic rays $\Gamma\setminus\Gamma_H$ living on $M$ into $\#\cP_d$ classes of geodesics $\Gamma_p$, such that geodesics in the same class encounter the same cells of the grid $\cG^n$ in the same order (see the precise definition of the set $\cP_d$ in the beginning of Section \ref{secpr} and Figure \ref{defclas}). We are then left with the task of evaluating the infimum of $\m$ over the geodesics in the same class.
		\begin{center}
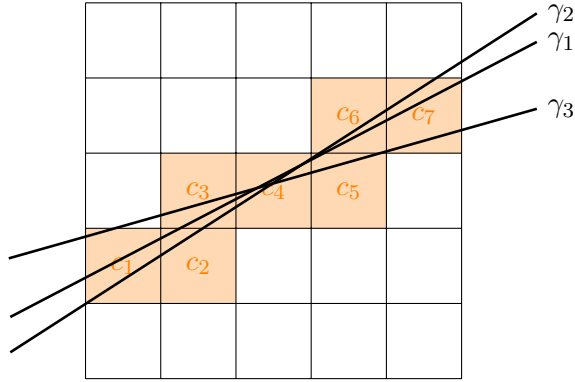

			\begin{tikzpicture}
				\fill[color=orange!30] (0,1) rectangle (1,2);
				\fill[color=orange!30] (1,1) rectangle (2,2);
				\fill[color=orange!30] (1,2) rectangle (2,3);
				\fill[color=orange!30] (2,2) rectangle (3,3);
				\fill[color=orange!30] (3,2) rectangle (4,3);
				\fill[color=orange!30] (3,3) rectangle (4,4);
				\fill[color=orange!30] (4,3) rectangle (5,4);
				\draw[orange] (0.5,1.5) node{$c_1$};
				\draw[orange] (1.5,1.5) node{$c_2$};
				\draw[orange] (1.5,2.5) node{$c_3$};
				\draw[orange] (2.5,2.5) node{$c_4$};
				\draw[orange] (3.5,2.5) node{$c_5$};
				\draw[orange] (3.5,3.5) node{$c_6$};
				\draw[orange] (4.5,3.5) node{$c_7$};
				\draw (0,0) grid (5,5);
				\draw[line width=1pt] (-1,0.82)--(6,4.48) node[right]{$\g_1$};
				\draw[line width=1pt] (-1,0.35)--(6,4.86) node[right]{$\g_2$};
				\draw[line width=1pt] (-1.02,1.6)--(6,3.59) node[right]{$\g_3$};
			\end{tikzpicture}
			\captionof{figure}{Examples of geodesics: $\g_1$ and $\g_2$ are in the same class, not $\g_3$.}\label{defclas}
		\end{center}
		\item We evaluate the difference in time spent in $\omega$ by two geodesics from the same class: we rearrange the sum in the definition of $\m$ to handle the different types of encountered hyperplanes separately. In fact, we treat them differently depending on the number of occurrences of each type of hyperplane: if a type $i$ hyperplane is encountered only a few times by the geodesics in the considered class, then, in probability, the impact of this hyperplane type in evaluating $|\m-\mp|$ will be negligible. On the contrary, if a type $i$ hyperplane is encountered a larger number of times, we can show, by applying Proposition \ref{gd} (large deviation result) to the appropriate random variables, that in probability, the impact of this hyperplane type in evaluating $|\m-\mp|$ will be exponentially decaying in $-\sqrt{n}$.
		
		By treating things in this way, we avoid confronting pathological cases like the one below, whose probability of occurrence is extremely low but which maximizes the difference $|\m-\mp|$.

		\begin{center}		
			\begin{tikzpicture}
				\fill[color=black!50] (0,0) rectangle (1,1);
				\fill[color=black!50] (1,1) rectangle (2,2);
				\fill[color=black!50] (2,2) rectangle (3,3);
				\fill[color=black!50] (3,3) rectangle (4,4);
				\fill[color=black!50] (4,4) rectangle (5,5);
				\draw (0,0) grid (5,5);
				\draw[color=orange][line width=1 pt] (-0.2,0)--(5,5.2) node[right]{$\g$};
				\draw[color=blue][line width=1 pt] (-0.58,0.38)--(4.62,5.58) node[right]{$\g'$};
			\end{tikzpicture}
			\captionof{figure}{Example of $\g$ and $\g'$ in the same class and where $\m \gg m_{\g^\prime}^T\left(\omega_{\varepsilon}^n\right)$.} 
		\end{center}
		
		\item Finally, we apply Proposition \ref{gd} to a representative of each class. The previous steps then allow us to reduce the problem to bounding $\P\left(\inf\limits_{\g \in \Gamma\backslash\Gamma_H} \m \leqslant\varepsilon-\delta\right)$ by
		\begin{equation*}
			\#\cP_d \left[C_2\exp\left(\frac{-\delta^2nT^2}{4d\left(T+1\right)}\right)+ C_1 \exp\left(\frac{-\delta^2\sqrt{nT}}{2^{14}d^{\frac{17}{4}}}\right)\right] ,
		\end{equation*}
		where $C_1$ and $C_2$ are strictly positive constants.
		To conclude, it remains to show that the number of classes $\#\cP_d$ is polynomial in $n$. This point of the proof boils down to solving an interesting algebraic problem: bounding the number of ways to separate $n$ points in the plane with a line.
	\end{enumerate}

	\subsection{Comparison with the proof in dimension $d=2$}
	
	As done in dimension $d=2$, our goal is to reduce the study of $\m$ (time spent by a geodesic $\g$ in $\omega_{\varepsilon}^n$, see paragraph "Construction of random checkerboards" page \pageref{dam}) to a subfamily of geodesics (definition page \pageref{geo}) with a polynomial cardinality in $n$, and then apply the large deviation result to the geodesics in this subfamily.
	In dimension $d=2$ (see \cite{1}), the relevant subfamily consists of geodesics passing through two corners, and it is possible to control the difference between the infimum over the set of all geodesics and the infimum over the geodesics passing through two corners. In higher dimensions, we encounter difficulties in generalizing the proof due to the following reason: we start with an arbitrary geodesic, and using only translations, we aim to bring it to a geodesic passing through a corner;
	
	\begin{itemize}
		\item In dimension $d=2$, the vertices of the grid $\cG^n$ are projected orthogonally to $\g$, which ensures that when a corner is encountered in the projection, a corner is indeed encountered in the original sense given in the definition, as seen in the Figure \ref{dim2}.
		Thus, both $\g$ and its translation $\g_s$ pass through the same cells until $\g_s$ intersects a corner.
		Therefore, the function $s\longmapsto m^T_{\g_s}\left(\omega\right)=\sum\limits_{\left(i, j\right)\in I\left(\g\right)} \frac{t^n_{ij}\left(\g_s\right)}{T}X^n_{ij}$ is affine, which is a key point in the proof.
		
		\begin{center}
			\begin{tikzpicture}[scale=2]\label{dim2}

				\draw (0,0)--(2,0);
				\draw (0,1)--(2,1);
				\draw (0,2)--(2,2);
				\draw (0,0)--(0,2);
				\draw (1,0)--(1,2);
				\draw (2,0)--(2,2);
				
				\draw[blue][opacity=0.2] (-2.82,0.32)--(0.33,-2.43);

				\draw[orange] (0,0)--(-1.06,-1.22);
				\draw[orange] (1,0)--(-0.49,-1.71);
				\draw[orange] (2,0)--(0.08,-2.21);
				\draw[orange] (0,1)--(-1.56,-0.79);
				\draw[orange] (1,1)--(-0.99,-1.28);
				\draw[orange] (2,1)--(-0.42,-1.78);
				\draw[orange] (0,2)--(-2.05,-0.36);
				\draw[orange] (1,2)--(-1.48,-0.85);
				\draw[orange] (2,2)--(-0.91,-1.35);
				
				\draw[line width=1.5pt] (0.63,0)--(2,1.58);
				\draw[dashed, line width=1.5pt] (-0.7,-1.53)--(0.63,0);
				\draw[dashed, line width=1.5pt] (2,1.58)--(2.37,2) node[right]{$\g$};
				
				\draw[orange, line width=1.5pt] (1,0)--(2,1.15);
				\draw[orange, dashed, line width=1.5pt] (-0.49,-1.71)--(1,0);
				\draw[orange, dashed, line width=1.5pt] (2,1.15)--(2.74,2) node[right]{$\g_s$};
				
				\draw (0,0) node{$\bullet$} node[above left]{$A$};
				\draw (1,0) node{$\bullet$} node[above left]{$B$};
				\draw (2,0) node{$\bullet$} node[above right]{$C$};
				\draw (0,1) node{$\bullet$} node[above left]{$D$};
				\draw (1,1) node{$\bullet$} node[above left]{$E$};
				\draw (2,1) node{$\bullet$} node[above right]{$F$};
				\draw (0,2) node{$\bullet$} node[above]{$G$};
				\draw (1,2) node{$\bullet$} node[above]{$H$};
				\draw (2,2) node{$\bullet$} node[above]{$I$};
				
				\draw[blue] (-1.06,-1.22) node{$\bullet$} node[left]{$A_1$};
				\draw[blue] (-0.49,-1.71) node{$\bullet$} node[left]{$B_1$};
				\draw[blue] (0.08,-2.21) node{$\bullet$} node[above]{$C_1$};
				\draw[blue] (-1.56,-0.79) node{$\bullet$} node[above]{$D_1$};
				\draw[blue] (-0.99,-1.28) node{$\bullet$} node[above right]{$E_1$};
				\draw[blue] (-0.42,-1.78) node{$\bullet$} node[below]{$F_1$};
				\draw[blue] (-2.05,-0.36) node{$\bullet$} node[above]{$G_1$};
				\draw[blue] (-1.48,-0.85) node{$\bullet$} node[below]{$H_1$};
				\draw[blue] (-0.91,-1.35) node{$\bullet$} node[below]{$I_1$};

			\end{tikzpicture}
			
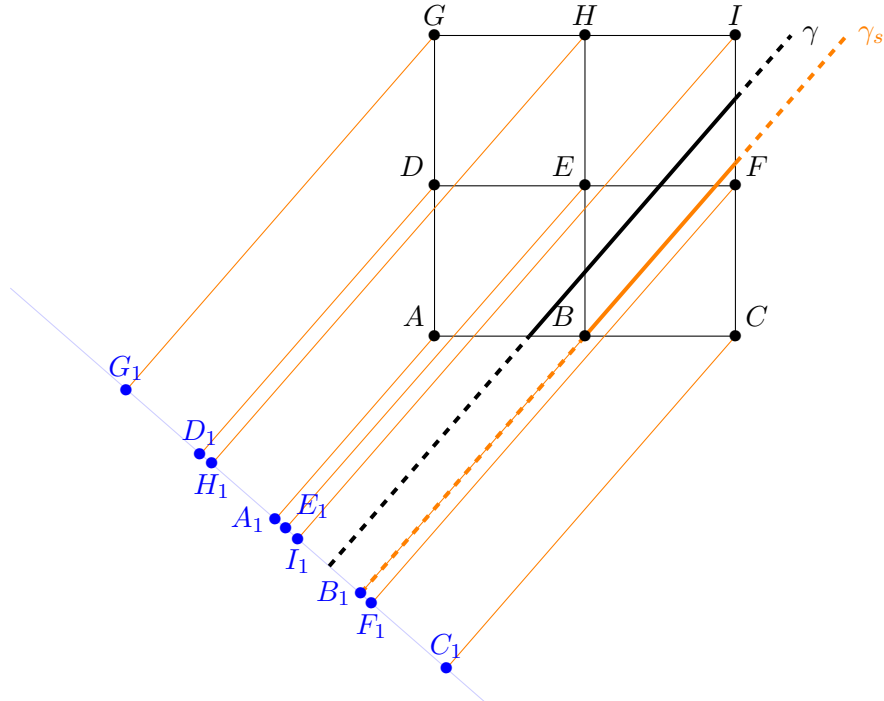
\captionof{figure}{Orthogonal projection of $M$ onto $\g$ in dimension $d=2$.}
		\end{center}
		
		\item In dimension $d=3$, when performing the same projection (projection of the edges of $\cG^n$ orthogonally onto $\g$), not all corners of the projection are "significant", meaning they do not all correspond to corners as defined.
		In fact, two edges can intersect on the projection without actually intersecting in reality. This leads to spurious corners in the projection.
		The problem is that these spurious corners depend on $\g$ (because they depend on the projection). Therefore, reducing the problem to one of these spurious corners through translations is not sufficient: Indeed, there are more spurious corners related to projections than there are geodesics in $\Gamma$, so it wouldn't even simplify the study of $\ell^T\left(\omega_{\varepsilon}^n\right)$. However, encountering a spurious corner in the projection indicates leaving one of the cells crossed by $\g$ and entering a new one. This causes $s\longmapsto m^T_{\g_s}\left(\omega\right)$ to lose its affine character.
		
		\begin{center}
			\begin{tikzpicture}[scale=2.5]
				
				\draw (1,0.23,1.52)--(1.49,0.18,0.73) node[right]{$\g$};
				\draw[dashed] (1,0.23,1.52)--(0.7,0.26,2);
				\draw (0.7,0.26,2)--(-0.12,0.34,3.31);
				
				\draw (0.09,0.09,3.46) node{$\bullet$} node[below]{$I_1$};
				\draw (0.12,1.09,3.41) node{$\bullet$} node[above]{$J_1$};
				\draw (-0.63,0.07,3.01) node{$\bullet$} node[below]{$L_1$};
				\draw (-0.6,1.06,2.96) node{$\bullet$} node[below left]{$K_1$};
				\draw (-0.33,1.14,3.13) node{$\bullet$} node[above]{$F_1$};
				\draw (-0.35,0.14,3.18) node{$\bullet$} node[above right]{$G_1$};
				\draw (-0.8,0.19,2.9) node{$\bullet$} node[above left]{$B_1$};
				\draw (-1.08,0.11,2.73) node{$\bullet$} node[below left]{$H_1$};
				\draw (-1.52,0.16,2.45) node{$\bullet$} node[above left]{$A_1$};
				\draw (-1.49,1.15,2.4) node{$\bullet$} node[above]{$D_1$};
				\draw (-1.05,1.11,2.68) node{$\bullet$} node[below left]{$E_1$};
				\draw (-0.77,1.18,2.85) node{$\bullet$} node[above]{$C_1$};
				\draw (-0.38,-0.86,3.22) node{$\bullet$} node[below]{$O_1$};
				\draw (-0.83,-0.81,2.94) node{$\bullet$} node[above right]{$M_1$};
				\draw (-1.1,-0.89,2.78) node{$\bullet$} node[below]{$P_1$};
				\draw (-1.55,-0.84,2.5) node{$\bullet$} node[below]{$N_1$};

				\draw[dashed] (0,0,0)--(1,0,0);
				\draw (1,0,0)--(1,1,0);
				\draw (1,1,0)--(0,1,0);
				\draw[dashed] (0,0,0)--(0,1,0);
				\draw[dashed] (0,0,1)--(1,0,1);
				\draw (1,0,1)--(1,1,1);
				\draw (1,1,1)--(0,1,1);
				\draw[dashed] (0,0,1)--(0,1,1);
				\draw[dashed] (0,0,0) -- (0,0,1); 
				\draw (1,0,0) -- (1,0,1); 
				\draw (1,1,0) -- (1,1,1); 
				\draw (0,1,0) -- (0,1,1); 
				\draw (0,0,2)--(1,0,2)--(1,1,2)--(0,1,2)--cycle;
				\draw[dashed] (0,0,2) -- (0,0,1); 
				\draw (1,0,2) -- (1,0,1); 
				\draw (1,1,2) -- (1,1,1); 
				\draw (0,1,2) -- (0,1,1); 
				\draw[dashed] (0,-1,0)--(1,-1,0); 
				\draw (1,-1,0)--(1,-1,1); 
				\draw (1,-1,1)--(0,-1,1); 
				\draw (0,-1,0)--(0,-1,1); 
				\draw[dashed] (0,0,0) -- (0,-1,0); 
				\draw (1,0,0) -- (1,-1,0); 
				\draw (1,0,1) -- (1,-1,1); 
				\draw[dashed] (0,0,1) -- (0,-0.387,1); 
				\draw (0,-0.387,1) -- (0,-1,1); 
				\draw (0,0,0) node{$\bullet$} node[above right]{$A$};
				\draw (1,0,0) node{$\bullet$} node[above right]{$B$};
				\draw (1,1,0) node{$\bullet$} node[above]{$C$};
				\draw (0,1,0) node{$\bullet$} node[above]{$D$};
				\draw (0,1,1) node{$\bullet$} node[above]{$E$};
				\draw (1,1,1) node{$\bullet$} node[above]{$F$};
				\draw (1,0,1) node{$\bullet$} node[below right]{$G$};
				\draw (0,0,1) node{$\bullet$} node[above left]{$H$};
				\draw (1,0,2) node{$\bullet$} node[below right]{$I$};
				\draw (1,1,2) node{$\bullet$} node[above]{$J$};
				\draw (0,1,2) node{$\bullet$} node[above]{$K$};
				\draw (0,0,2) node{$\bullet$} node[below]{$L$};
				\draw (1,-1,0) node{$\bullet$} node[above right]{$M$};
				\draw (0,-1,0) node{$\bullet$} node[above right]{$N$};
				\draw (1,-1,1) node{$\bullet$} node[below]{$O$};
				\draw (0,-1,1) node{$\bullet$} node[above left]{$P$};
				
				\draw[orange] (1,-1,1)--(-0.38,-0.86,3.22);
				\draw[orange] (0,-1,0)--(-1.55,-0.84,2.5);
				\draw[orange] (0,-1,1)--(-1.1,-0.89,2.78);
				\draw[orange] (1,-1,0)--(-0.83,-0.81,2.94);
				
				\draw[orange] (0,0,2)--(-0.63,0.07,3.01);
				\draw[orange] (0,1,2)--(-0.6,1.06,2.96);
				\draw[orange] (1,1,2)--(0.12,1.09,3.41);
				\draw[orange] (1,0,2)--(0.09,0.09,3.46);
				
				\draw[orange] (0,0,1)--(-1.08,0.11,2.73);
				\draw[orange] (1,0,1)--(-0.35,0.14,3.18);
				\draw[orange] (1,1,1)--(-0.33,1.14,3.13);
				\draw[orange] (0,1,1)--(-1.05,1.11,2.68);
				
				\draw[orange] (0,0,0)--(-1.52,0.16,2.45);
				\draw[orange] (1,0,0)--(-0.8,0.19,2.9);
				
				\draw[orange] (0,1,0)--(-1.49,1.15,2.4);
				\draw[orange] (1,1,0)--(-0.77,1.18,2.85);
				
				\draw[blue] (-1.49,1.15,2.4)--(-0.77,1.18,2.85);
				\draw[blue] (-1.49,1.15,2.4)--(-1.05,1.11,2.68);
				\draw[blue] (-0.77,1.18,2.85)--(-0.33,1.14,3.13);
				\draw[blue] (-1.49,1.15,2.4)--(-1.52,0.16,2.45);
				\draw[blue] (-0.77,1.18,2.85)--(-0.8,0.19,2.9);
				
				\draw[blue] (-1.52,0.16,2.45)--(-0.8,0.19,2.9);
				\draw[blue] (-1.52,0.16,2.45)--(-1.08,0.11,2.73);
				\draw[blue] (-0.8,0.19,2.9)--(-0.35,0.14,3.18);
				
				\draw[blue] (-1.08,0.11,2.73)--(-0.35,0.14,3.18)--(-0.33,1.14,3.13)--(-1.05,1.11,2.68)--cycle;
				
				\draw[blue] (-0.63,0.07,3.01)--(-0.6,1.06,2.96)--(0.12,1.09,3.41)--(0.09,0.09,3.46)--cycle;
				
				\draw[blue] (-1.05,1.11,2.68)--(-0.6,1.06,2.96);
				\draw[blue] (-0.33,1.14,3.13)--(0.12,1.09,3.41);
				\draw[blue] (-0.35,0.14,3.18)--(0.09,0.09,3.46);
				\draw[blue] (-1.08,0.11,2.73)--(-0.63,0.07,3.01);
				
				\draw[blue] (-1.55,-0.84,2.5)--(-0.83,-0.81,2.94)--(-0.38,-0.86,3.22)--(-1.1,-0.89,2.78)--cycle;
				
				\draw[blue] (-1.08,0.11,2.73)--(-1.1,-0.89,2.78);
				\draw[blue] (-0.35,0.14,3.18)--(-0.38,-0.86,3.22);
				\draw[blue] (-1.52,0.16,2.45)--(-1.55,-0.84,2.5);
				\draw[blue] (-0.8,0.19,2.9)--(-0.83,-0.81,2.94);
				
				\fill[color=blue][opacity=0.2] (-1.62,-0.98,2.46)--(0.25,-0.96,3.62)--(0.3,1.24,3.51)--(-1.57,1.22,2.35)--cycle;
				
				\begin{scope}[xshift=2cm,yshift=-0.5cm,xscale=-1,rotate=90]
					\draw[blue] (0,0.53) node{$\bullet$} node[below left]{$H_1$};
					\draw[blue] (0,0) node{$\bullet$} node[above left]{$A_1$};
					\draw[blue] (1,-0.08) node{$\bullet$} node[above]{$D_1$};
					\draw[blue] (1,0.45) node{$\bullet$} node[below left]{$E_1$};
					\draw[blue] (1.1,0.76) node{$\bullet$} node[above]{$C_1$};
					\draw[blue] (1,1) node{$\bullet$} node[below left]{$K_1$};
					\draw[blue] (1.1,1.29) node{$\bullet$} node[above]{$F_1$};
					\draw[blue] (1.1,1.82) node{$\bullet$} node[above]{$J_1$};
					\draw[blue] (0.1,1.91) node{$\bullet$} node[below]{$I_1$};
					\draw[blue] (0.1,1.38) node{$\bullet$} node[above right]{$G_1$};
					\draw[blue] (0,1.06) node{$\bullet$} node[below]{$L_1$};
					\draw[blue] (0.1,0.85) node{$\bullet$} node[above left]{$B_1$};
					\draw[blue] (-0.89,1.47) node{$\bullet$} node[below]{$O_1$};
					\draw[blue] (-0.89,0.93) node{$\bullet$} node[above right]{$M_1$};
					\draw[blue] (-1,0.6) node{$\bullet$} node[below]{$P_1$};
					\draw[blue] (-1,0.09) node{$\bullet$} node[below]{$N_1$};
					
					\draw (0,0)--(1,-0.08) ;
					\draw (0,0.53)--(1,0.45) ;
					\draw (0.1,0.85)--(1.1,0.76) ;
					\draw (0,1.06)--(1,1) ;
					\draw (0.1,1.38)--(1.1,1.29) ;
					\draw (0.1,1.91)--(1.1,1.82) ;
					\draw (-0.89,1.47)--(0.1,1.38) ;
					\draw (-0.89,0.93)--(0.1,0.85) ;
					\draw (-1,0.6)--(0,0.53) ;
					\draw (-1,0.09)--(0,0) ;
					\draw (1.1,1.82)--(1,1) ;
					\draw (1.1,1.29)--(1,0.45) ;
					\draw (1.1,0.76)--(1,-0.08) ;
					\draw (0.1,0.85)--(0,0) ;
					\draw (0.1,1.38)--(0,0.53) ;
					\draw (0.1,1.91)--(0,1.06) ;
					\draw (-0.89,1.47)--(-0.89,0.93) ;
					\draw (-1,0.6)--(-1,0.09) ;
					\draw (-0.89,1.47)--(-1,0.6);
					\draw (-0.89,0.93)--(-1,0.09);
					\draw (1.1,1.82)--(1.1,1.29);
					\draw (1.1,1.29)--(1.1,0.76);
					\draw (1,1)--(1,0.45);
					\draw (1,0.45)--(1,-0.08);
					\draw (0,0.53)--(0,0);
					\draw (0,1.06)--(0,0.53);
					\draw (0.1,1.38)--(0.1,0.85);
					\draw (0.1,1.91)--(0.1,1.38);
					
					\draw[orange] (0.04,1.385) node{$\bullet$};
					\draw[orange] (0.06,1.06) node{$\bullet$};
					\draw[orange] (0.1,1.05) node{$\bullet$};
					\draw[orange] (0.04,0.85) node{$\bullet$};
					\draw[orange] (0,0.86) node{$\bullet$};
					\draw[orange] (0.06,0.53) node{$\bullet$};
					\draw[orange] (1.04,1.3) node{$\bullet$};
					\draw[orange] (1,0.78) node{$\bullet$};
					\draw[orange] (1.04,0.77) node{$\bullet$};
					\draw[orange] (-0.93,0.595) node{$\bullet$};
				\end{scope}
			\end{tikzpicture}
			
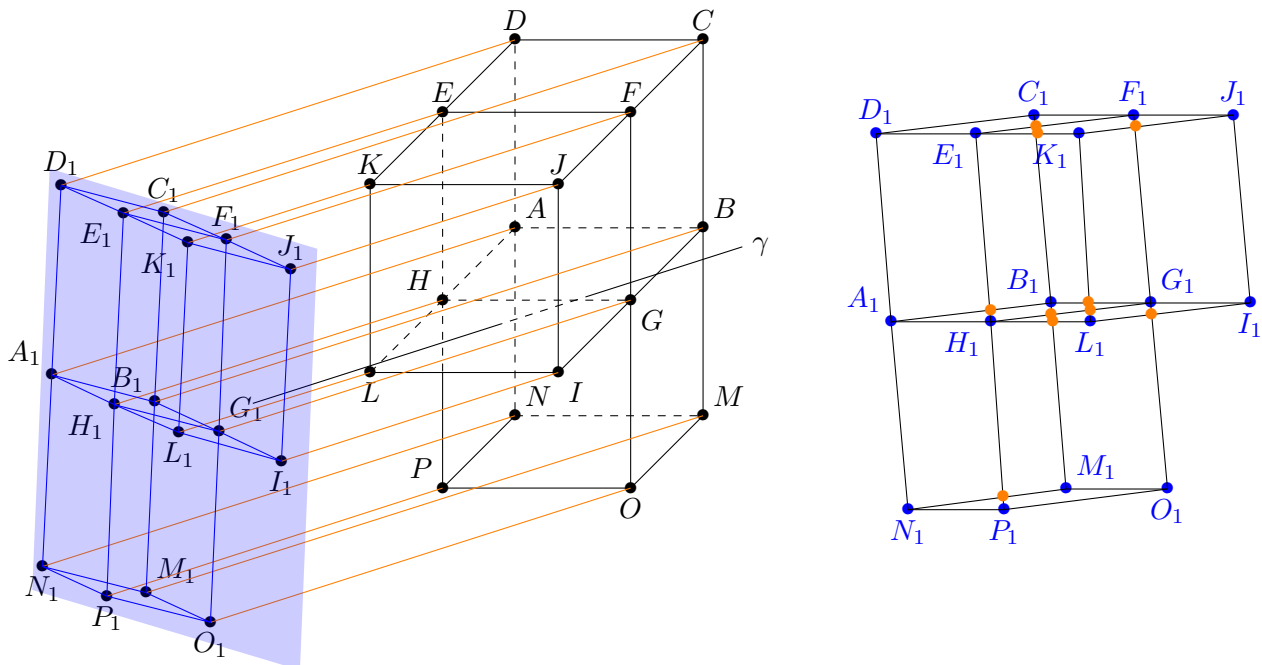
\captionof{figure}{Projection of $M$ orthogonally onto $\g$ in dimension $d=3$. In blue, the "significant" corners; in orange, the "spurious" corners.}
		\end{center}
	\end{itemize}
	
	Thus, we already encounter difficulties in dimension $d=3$ in trying to reduce, starting from an arbitrary geodesic in $M$, to a geodesic passing through a corner.
	
	The idea to extend the proof to higher dimensions has therefore been to group geodesics according to the different cells they encounter during their trajectory, rather than applying transformations to them.

	\section{Preliminaries}
	
	In this section, we present some results that allow us to restrict the study to $T \in \left(0,1\right)$ (Lemma \ref{restT}) and geodesics that are not completely contained in a hyperplane of the grid $\cG^n$ (Lemma \ref{restg}). This subset of $\Gamma$ is where $\g\longmapsto\m$ is continuous (Lemma \ref{continuite}).
	
	\begin{lemme}\label{restT}
		If $(ii)$ is true for every $T\in \left(0,1\right)$, then $(ii)$ is true for every $T>0$.
	\end{lemme}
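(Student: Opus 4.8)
The plan is to deduce the statement for an arbitrary $T>0$ from the hypothesis at small times, using the superadditivity of the unnormalised functional $L^T:=T\,\ell^T\left(\omega_\varepsilon^n\right)$. This superadditivity, $L^{s+t}\geqslant L^s+L^t$, is the content of the property recalled from \cite{5} just before the Corollary (it is what yields $\ell^{T'}\leqslant\lim_{T\to+\infty}\ell^T$); alternatively it follows directly by splitting a geodesic: for any $\g\in\Gamma$ one has $\int_0^{s+t}\chi_{\omega_\varepsilon^n}\left(\g\left(u\right)\right)\,\text{d}u=\int_0^{s}\chi_{\omega_\varepsilon^n}\left(\g\left(u\right)\right)\,\text{d}u+\int_0^{t}\chi_{\omega_\varepsilon^n}\left(\g\left(s+u\right)\right)\,\text{d}u$, and since $u\mapsto\g\left(s+u\right)$ is again a geodesic, taking the infimum over $\g\in\Gamma$ gives $L^{s+t}\geqslant L^s+L^t$. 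Note that this inequality holds pathwise, i.e. for every realisation of $\omega_\varepsilon^n$.

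Iterating, for any $T_0>0$ and any integer $k\geqslant 1$ with $kT_0\leqslant T$ I get $L^T\geqslant L^{kT_0}+L^{T-kT_0}\geqslant kL^{T_0}$, using $L^{T-kT_0}\geqslant 0$. Choosing $k=\lfloor T/T_0\rfloor$ (so that $kT_0\leqslant T<(k+1)T_0$) and dividing by $T$, this reads
\begin{equation*}
\ell^T\left(\omega_\varepsilon^n\right)\geqslant \frac{kT_0}{T}\,\ell^{T_0}\left(\omega_\varepsilon^n\right)\geqslant \frac{k}{k+1}\,\ell^{T_0}\left(\omega_\varepsilon^n\right).
\end{equation*}
The point is that for a fixed $T$ the integer $k=\lfloor T/T_0\rfloor$ tends to $+\infty$ as $T_0\to0^+$, so the factor $\frac{k}{k+1}$ can be pushed arbitrarily close to $1$ while keeping $T_0\in\left(0,1\right)$.

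Now fix $T>0$ and $\delta>0$. The cases $\delta\geqslant\varepsilon$ are immediate (if $\delta>\varepsilon$ the event is empty because $\ell^T\left(\omega_\varepsilon^n\right)\geqslant0$; for $\delta=\varepsilon$ the same inclusion below with $\delta'=\varepsilon$ applies), so assume $0<\delta<\varepsilon$. I pick $T_0\in\left(0,1\right)$ small enough that $k:=\lfloor T/T_0\rfloor$ satisfies $k>\frac{\varepsilon-\delta}{\delta}$, and set $\delta':=\delta-\frac{\varepsilon-\delta}{k}>0$. From the displayed inequality, $\ell^{T_0}\left(\omega_\varepsilon^n\right)\leqslant\frac{k+1}{k}\,\ell^{T}\left(\omega_\varepsilon^n\right)$, whence on the event $\{\ell^T\left(\omega_\varepsilon^n\right)\leqslant\varepsilon-\delta\}$ one has $\ell^{T_0}\left(\omega_\varepsilon^n\right)\leqslant\frac{k+1}{k}\left(\varepsilon-\delta\right)=\varepsilon-\delta'$. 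This yields the pathwise inclusion $\{\ell^T\left(\omega_\varepsilon^n\right)\leqslant\varepsilon-\delta\}\subseteq\{\ell^{T_0}\left(\omega_\varepsilon^n\right)\leqslant\varepsilon-\delta'\}$, so $\P\left(\ell^T\left(\omega_\varepsilon^n\right)\leqslant\varepsilon-\delta\right)\leqslant\P\left(\ell^{T_0}\left(\omega_\varepsilon^n\right)\leqslant\varepsilon-\delta'\right)$. Since $T_0\in\left(0,1\right)$ and $\delta'>0$, the hypothesis that \ref{thii} holds on $\left(0,1\right)$ makes the right-hand side tend to $0$, and \ref{thii} at $T$ follows.

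As for the main difficulty, there is no deep obstacle here: the lemma is essentially a Fekete-type superadditivity bookkeeping argument. The only thing to watch is that the multiplicative loss $\frac{k}{k+1}$ coming from superadditivity degrades the threshold $\varepsilon-\delta$; one must therefore choose $T_0$ small enough (equivalently $k$ large enough) that the degraded threshold $\varepsilon-\delta'$ still lies strictly below $\varepsilon$ with $\delta'>0$, which is exactly the quantitative condition $k>\frac{\varepsilon-\delta}{\delta}$ used above.
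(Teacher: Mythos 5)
Your argument is correct and is essentially the paper's: both split $[0,T]$ into subintervals and use that the time-shift $u\longmapsto\g\left(s+u\right)$ of a geodesic is again a geodesic, which is exactly the superadditivity of $T\,\ell^T\left(\omega_\varepsilon^n\right)$ you invoke. The only difference is cosmetic: the paper chooses the small time adapted to $T$, namely $T^\prime=T/m\in\left(0,1\right)$ with $m\in\N^*$, so that $T$ is exactly $m$ copies of $T^\prime$; this gives the lossless pathwise inequality $\ell^T\left(\omega_\varepsilon^n\right)\geqslant\ell^{T^\prime}\left(\omega_\varepsilon^n\right)$ and hence the event inclusion with the \emph{same} $\delta$, dispensing with your floor-function remainder, the $\frac{k}{k+1}$ loss, the degraded threshold $\delta^\prime$, and the edge-case discussion for $\delta\geqslant\varepsilon$.
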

	
	\begin{proof}
		Let $T>1$ and $m\in\N^*$ such that $T^\prime=\frac{T}{m}\in ]0,1[$. Let's show that $\ell^T\left(\omega_\varepsilon^n\right)\geqslant \ell^{T^\prime}\left(\omega_\varepsilon^n\right)$.
		Let $\rho>0$ and let $\g\in\Gamma$ a geodesic such that $\ell^T\left(\omega_\varepsilon^n\right)+\rho\geqslant \m$. We define, for every $k\in\llbracket 0, m-1\rrbracket$,  $\g_k\left(\cdot\right)=\g\left(kT^\prime+ \cdot \right)$. We then obtain:
		\begin{align*}
			\ell^T\left(\omega_\varepsilon^n\right)+\rho &\geqslant \m = \frac{1}{T}\int_0^T\chi_{\omega_\varepsilon^n}\left(\g\left(t\right)\right) \text{d}t = \frac{1}{T} \sum\limits_{k=0}^{m-1} \int_{kT^\prime}^{\left(k+1\right)T^\prime}\chi_{\omega_\varepsilon^n}\left(\g\left(t\right)\right) \text{d}t\\
			&= \frac{1}{T} \sum\limits_{k=0}^{m-1} \int_0^{T^\prime}\chi_{\omega_\varepsilon^n}\left(\g\left(kT^\prime+t\right)\right) \text{d}t = \frac{1}{T} \sum\limits_{k=0}^{m-1} \int_0^{T^\prime}\chi_{\omega_\varepsilon^n}\left(\g_k\left(t\right)\right) \text{d}t = \frac{1}{T} \sum\limits_{k=0}^{m-1} T^\prime m^{T^\prime}_{\g_k}\left(\omega_\varepsilon^n\right) \\
			&\geqslant \frac{T^\prime}{T} \sum\limits_{k=0}^{m-1} \ell^{T^\prime}\left(\omega_\varepsilon^n\right) = \ell^{T^\prime}\left(\omega_\varepsilon^n\right).
		\end{align*}
		By letting $\rho$ tend to $0$, we obtain $\ell^T\left(\omega_\varepsilon^n\right)\geqslant \ell^{T^\prime}\left(\omega_\varepsilon^n\right)$. Hence, we obtain the result.
	\end{proof}
	
	We now assume that $T \in ]0,1[$. Therefore, any geodesic $\gamma \in \Gamma$ crosses at most once the cell $c_{i_1,\dots,i_d}$ over the interval $[0,T]$.
	
	We denote by $\Gamma_H\subset\Gamma$ the set of geodesics that are entirely contained within a hyperplane of the grid $\cG^n$.
	It is clear that $\g\longmapsto\m$ is not continuous on the entire $\Gamma$, as shown by the following counterexample:
	
	\begin{center}
		\begin{tikzpicture}[scale=2]
			
			\fill[blue,opacity=0.2] (-1,1,2) node[left]{hyperplane $h$} --(1,1,2)--(1,1,0)--(-1,1,0)--cycle;
			
			\draw[blue,opacity=0.5] (-1,1,2) node[left]{hyperplane $h$};
			
			\fill[orange,opacity=0.2] (0,1,1)--(0,1,2)--(0,2,2)--(0,2,1)--cycle;
			\fill[orange,opacity=0.2] (1,1,1)--(1,1,2)--(1,2,2)--(1,2,1)--cycle;
			\fill[orange,opacity=0.2] (0,1,1)--(0,1,2)--(1,1,2)--(1,1,1)--cycle;
			\fill[orange,opacity=0.2] (0,2,1)--(0,2,2)--(1,2,2)--(1,2,1)--cycle;
			\fill[orange,opacity=0.2] (0,1,1)--(0,2,1)--(1,2,1)--(1,1,1)--cycle;
			\fill[orange,opacity=0.2] (0,1,2)--(0,2,2)--(1,2,2)--(1,1,2)--cycle;
			
			\fill[orange,opacity=0.2] (0,1,0)--(0,1,1)--(0,2,1)--(0,2,0)--cycle;
			\fill[orange,opacity=0.2] (1,1,0)--(1,1,1)--(1,2,1)--(1,2,0)--cycle;
			\fill[orange,opacity=0.2] (0,1,0)--(0,1,1)--(1,1,1)--(1,1,0)--cycle;
			\fill[orange,opacity=0.2] (0,2,0)--(0,2,1)--(1,2,1)--(1,2,0)--cycle;
			\fill[orange,opacity=0.2] (0,1,0)--(0,2,0)--(1,2,0)--(1,1,0)--cycle;
			
			\fill[orange,opacity=0.2] (0,0,0)--(0,0,1)--(0,1,1)--(0,1,0)--cycle;
			\fill[orange,opacity=0.2] (1,0,0)--(1,0,1)--(1,1,1)--(1,1,0)--cycle;
			\fill[orange,opacity=0.2] (0,0,0)--(0,0,1)--(1,0,1)--(1,0,0)--cycle;
			\fill[orange,opacity=0.2] (0,0,0)--(0,1,0)--(1,1,0)--(1,0,0)--cycle;
			\fill[orange,opacity=0.2] (0,0,1)--(0,1,1)--(1,1,1)--(1,0,1)--cycle;
			
			\draw[dashed] (-1,0,0)--(-1,1,0);
			\draw[dashed] (-1,1,0)--(-1,1,1);
			\draw[dashed] (-1,1,1)--(-1,0,1);
			\draw[dashed] (-1,0,0)--(-1,0,1);
			\draw[dashed] (-1,1,0)--(-1,2,0);
			\draw (-1,2,0)--(-1,2,1);
			\draw[dashed] (-1,2,1)--(-1,1,1);
			\draw[dashed] (-1,0,1)--(-1,0,2);
			\draw[dashed] (-1,1,1)--(-1,1,2);
			\draw (-1,0,2)--(-1,1,2);
			\draw (-1,1,2)--(-1,2,2);
			\draw (-1,2,2)--(-1,2,1);
			
			\draw[dashed] (0,0,0)--(-1,0,0);
			\draw[dashed] (0,1,0)--(-1,1,0);
			\draw[dashed] (0,1,1)--(-1,1,1);
			\draw (-1,2,0)--(0,2,0);
			\draw (-1,2,1)--(0,2,1);
			\draw[dashed] (0,0,1)--(-1,0,1);
			\draw (-1,1,2)--(0,1,2);
			\draw (0,2,2)--(-1,2,2);
			\draw (0,0,2)--(-1,0,2);
			
			\draw[dashed] (0,0,0)--(0,1,0);
			\draw[dashed] (0,1,0)--(0,1,1);
			\draw[dashed] (0,1,1)--(0,0,1);
			\draw[dashed] (0,0,0)--(0,0,1);
			\draw[dashed] (0,1,0)--(0,2,0);
			\draw (0,2,0)--(0,2,1);
			\draw[dashed] (0,2,1)--(0,1,1);
			\draw[dashed] (0,0,1)--(0,0,2);
			\draw[dashed] (0,1,1)--(0,1,2);
			\draw (0,0,2)--(0,1,2);
			\draw (0,1,2)--(0,2,2);
			\draw (0,2,2)--(0,2,1);
			
			\draw[dashed] (0,0,0)--(1,0,0);
			\draw[dashed] (0,1,0)--(1,1,0);
			\draw[dashed] (0,1,1)--(1,1,1);
			\draw (1,2,0)--(0,2,0);
			\draw (1,2,1)--(0,2,1);
			\draw[dashed] (0,0,1)--(1,0,1);
			\draw (1,1,2)--(0,1,2);
			\draw (0,2,2)--(1,2,2);
			\draw (0,0,2)--(1,0,2);
			
			\draw (1,0,0)--(1,1,0);
			\draw (1,1,0)--(1,1,1);
			\draw (1,1,1)--(1,0,1);
			\draw (1,0,0)--(1,0,1);
			\draw (1,1,0)--(1,2,0);
			\draw (1,2,0)--(1,2,1);
			\draw (1,2,1)--(1,1,1);
			\draw (1,0,1)--(1,0,2);
			\draw (1,1,1)--(1,1,2);
			\draw (1,0,2)--(1,1,2);
			\draw (1,1,2)--(1,2,2);
			\draw (1,2,2)--(1,2,1);
			
			\draw[orange, dashed,line width=1pt] (0.24,0.64,2)--(1,0.64,0.58);
			\draw[blue, dashed,line width=1pt] (0.24,1,2)--(1,1,0.58);
			\draw[orange, dashed,line width=1pt] (0.24,1.34,2)--(1,1.34,0.58);
			
			\draw[orange,line width=1pt] (1.31,0.64,0)--(1,0.64,0.58);
			\draw[blue,line width=1pt] (1.31,1,0)--(1,1,0.58);
			\draw[orange,line width=1pt] (1.31,1.34,0)--(1,1.34,0.58);
			
			\draw[orange,line width=1pt] (0.24,0.64,2)--(-1,0.64,4.32);
			\draw[blue,line width=1pt] (0.24,1,2)--(-1,1,4.32);
			\draw[orange,line width=1pt] (0.24,1.34,2)--(-1,1.34,4.32);
			
			\draw[orange,line width=1pt] (1.31,0.64,0) node[right]{$\g_k^1$};
			\draw[blue,line width=1pt] (1.31,1,0) node[right]{$\g$};
			\draw[orange,line width=1pt] (1.31,1.34,0) node[right]{$\g_k^2$};
			
		\end{tikzpicture}
		
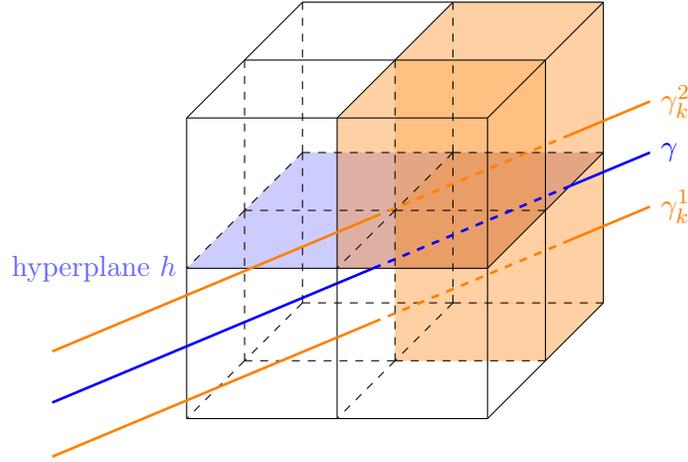
\captionof{figure}{$\g$ is contained within the blue hyperplane $h$. $\omega_\varepsilon^n$ is the union of the cubes colored in orange.}
	\end{center}
	Indeed, the sequence of geodesics $\left(\g_k^1\right)_k$, parallel to $\g$ and converging to $\g$ from below the hyperplane $h$, satisfies that for every $k\in\N$, $m_{\g_k^1}^T\left(\omega_\varepsilon^n\right)=C_1$, where $C_1>0$ is the time spent by $\g_k^1$ in the cube at the back bottom-right of $M$. The sequence of geodesics $\left(\g_k^2\right)_k$, parallel to $\g$ and converging to $\g$ from above the hyperplane $h$, satisfies that for every $k\in\N$, $m_{\g_k^2}^T\left(\omega_\varepsilon^n\right)=C_1+C_2$, where $C_2>0$ is the time spent by $\g_k^2$ in the cube at the front top-right of $M$ ($C_1$ is also equal to the time spent by $\g_k^2$ in the cube at the back top-right of $M$). Thus, $\g_k^1$ and $\g_k^2$ have the same limit $\g$, but not $m_{\g_k^1}^T\left(\omega_\varepsilon^n\right)$ and $m_{\g_k^2}^T\left(\omega_\varepsilon^n\right)$.

	However, $\g\longmapsto\m$ is continuous on the subset $\Gamma\backslash\Gamma_H$.
	
	\begin{lemme}\label{continuite}
		$\g\longmapsto\m$ is continuous on $\Gamma\backslash\Gamma_H$.
	\end{lemme}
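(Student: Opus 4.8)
The plan is to identify $\Gamma$ with the unit tangent bundle $\T^d\times S^{d-1}$, writing each geodesic as $\g(t)=x_0+tv \pmod{\Z^d}$ with $(x_0,v)\in\T^d\times S^{d-1}$ carrying its natural product topology, so that the evaluation map $(\g,t)\mapsto\g(t)$ is continuous. Since $\co$ is constant (equal to $0$ or $1$) on the open interior of each cell $\mathring{c}$, the only possible source of discontinuity of $t\mapsto\co(\g(t))$ is the grid skeleton $\cH=\bigcup_{i,k}\{x: x_i=k/n\}$, the union of all grid hyperplanes, which contains $\partial\omega_\varepsilon^n$. The whole argument therefore reduces to controlling the set of times at which $\g$ meets $\cH$.

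First I would establish the characterization that $\g\in\Gamma_H$ precisely when there is a coordinate $i$ with $v_i=0$ and $(x_0)_i\in\frac1n\Z$ (a grid value). Using this, for $\g\in\Gamma\backslash\Gamma_H$ I claim the \emph{bad set} $B_\g=\{t\in[0,T]:\g(t)\in\cH\}$ is finite. Indeed, I decompose $\cH$ by hyperplane type. For a coordinate $i$ with $v_i\neq 0$, the condition $(x_0)_i+tv_i\in\frac1n\Z \pmod 1$ forces $t$ into an arithmetic progression of spacing $\frac{1}{n|v_i|}$, hence only finitely many such times lie in $[0,T]$. For a coordinate $i$ with $v_i=0$, the $i$-th coordinate $(x_0)_i$ is constant along $\g$, and since $\g\notin\Gamma_H$ it is not a grid value, so $\g$ never meets a type-$i$ hyperplane. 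Summing over the finitely many types gives $\#B_\g<\infty$; in particular $B_\g$ has Lebesgue measure $0$.

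Then, fixing $\g\in\Gamma\backslash\Gamma_H$ and any sequence $\g_k\to\g$ in $\Gamma$, I argue pointwise convergence of the integrands off $B_\g$: for $t\notin B_\g$ the point $\g(t)$ lies in the open interior $\mathring{c}$ of a unique cell, and since $\g_k(t)\to\g(t)$ with $\mathring{c}$ open, $\g_k(t)\in\mathring{c}$ for $k$ large, whence $\co(\g_k(t))=\co(\g(t))$. Thus $\co(\g_k(\cdot))\to\co(\g(\cdot))$ on $[0,T]\setminus B_\g$, i.e.\ almost everywhere since $B_\g$ is negligible. As the integrands are bounded by $1$ on the bounded interval $[0,T]$, the bounded convergence theorem gives $\mk\to\m$, which is exactly the asserted continuity.

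\textbf{Main obstacle.} The crux, and the only place where the hypothesis $\g\notin\Gamma_H$ is genuinely used, is the finiteness (or at least the measure-zero property) of $B_\g$ in the second step: one must rule out the degenerate situation in which $\g$ runs inside a grid hyperplane over a whole time interval, which is precisely the phenomenon exhibited by the counterexample preceding the lemma and which $\Gamma_H$ removes. Once $B_\g$ is shown to be negligible, the remainder is a routine application of bounded convergence.
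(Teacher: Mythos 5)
Your proof is correct and follows essentially the same route as the paper: both reduce continuity at $\g\in\Gamma\backslash\Gamma_H$ to almost-everywhere pointwise convergence of $t\mapsto\co\left(\g_k\left(t\right)\right)$ away from the (finite) set of times where $\g$ meets the grid skeleton, and conclude by dominated (bounded) convergence. The only difference is one of detail: where the paper simply asserts that $\left\{t\in\left[0,T\right] \, \middle| \, \g\left(t\right)\in\partial\cij\right\}$ is finite because $\g\notin\Gamma_H$, you justify it explicitly via the characterization of $\Gamma_H$ in coordinates $\left(x_0,v\right)$ and the arithmetic-progression argument for each hyperplane type, which is a welcome addition rather than a deviation.
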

	
	\begin{proof}
		Let $\g\in \Gamma\backslash\Gamma_H$. Let $\left(\g_k\right)_{k\in\N}$ be a sequence of geodesics converging to $\g$. We want to show that $\left(\mk\right)_{k\in\N}$ converges to $\m$.
		
		We write $\co=\sum\limits_{i_1, \dots, i_d=1}^n X_{i_1, \dots, i_d}\cc$ 
		and $\mk=\sum\limits_{i_1, \dots, i_d=1}^n X_{i_1, \dots, i_d} \mkc$. Let's show that for any $\left(i_1, \dots, i_d\right)\in \llbracket 1, n \rrbracket^d$, the sequence $\left(\mkc\right)_{k\in\N}$ converges to $\mc$. Consider $\left(i_1, \dots, i_d\right)\in \llbracket 1, n \rrbracket^d$, then $\left(\ccgk\right)_{k\in\N}$ converges almost everywhere to $\ccg$. Indeed, if $\g\left(t\right)\in \icij$, then for sufficiently large $k$, we have $\g_k\left(t\right)\in \icij$, and $\ccgkt=\ccgt=1$. Similarly, if $\g\left(t\right)\notin \cij$, then for sufficiently large $k$, we have $\g_k\left(t\right)\notin \cij$, and $\ccgkt=\ccgt=0$. Moreover, the set $\left\{t\in\left[0,T\right], \g\left(t\right)\in \partial \cij \right\}$ (where $ \partial \cij $ denotes the boundary of $ \cij $) is finite since $\g\in \Gamma\backslash\Gamma_H$. As $\mkc$ $=\frac{1}{T}\int_0^T \ccgkt \text{d}t$, we can conclude by dominated convergence.
	\end{proof}
	
	\begin{lemme}\label{restg}
		We have $\inf\limits_{\g \in \Gamma} \m=\inf\limits_{\g \in \Gamma \backslash \Gamma_H} \m$.
	\end{lemme}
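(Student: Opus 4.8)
The plan is to prove the two inequalities separately. Since $\Gamma\backslash\Gamma_H\subset\Gamma$, an infimum over a larger set is no larger, so $\inf_{\g\in\Gamma}\m\leqslant\inf_{\g\in\Gamma\backslash\Gamma_H}\m$ is immediate, and the entire content of the lemma is the reverse inequality. For that it suffices to show that \emph{every} $\g\in\Gamma_H$ satisfies $\inf_{\g'\in\Gamma\backslash\Gamma_H}m^T_{\g'}\left(\omega_\varepsilon^n\right)\leqslant\m$. Indeed, this bound holds trivially for $\g\in\Gamma\backslash\Gamma_H$ as well, so it would give $\m\geqslant\inf_{\g\in\Gamma\backslash\Gamma_H}\m$ for all $\g\in\Gamma$, hence $\inf_{\g\in\Gamma}\m\geqslant\inf_{\g\in\Gamma\backslash\Gamma_H}\m$ and the equality.

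To extract such a value from a geodesic $\g\in\Gamma_H$, I would perturb $\g$ off the grid hyperplanes that contain it. Let $I\subset\llbracket 1,d\rrbracket$ be the set of types $i$ for which $\g$ lies in a grid hyperplane of type $i$; for each $i\in I$ the $i$-th coordinate of $\g$ is a constant $k_i/n$ and the $i$-th component of the direction of $\g$ vanishes. Choosing a sign $\sigma_i\in\{-1,+1\}$ for each $i\in I$, I translate $\g$ into the parallel geodesic $\g_s$ obtained by shifting its base point by $s\sum_{i\in I}\sigma_i e_i$. For small $s>0$ the $i$-th coordinate of $\g_s$ equals $k_i/n+\sigma_i s\notin\frac1n\Z$ for every $i\in I$, while the coordinates of $\g_s$ in the remaining directions are those of $\g$ and are never constant on a grid value; hence $\g_s$ is no longer contained in any grid hyperplane, i.e. $\g_s\in\Gamma\backslash\Gamma_H$.

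Next I would let $s\to 0^+$ and identify the limit of $m^T_{\g_s}\left(\omega_\varepsilon^n\right)$. For almost every $t$ the point $\g(t)$ lies in the relative interior of a $(d-1)$-face of the grid, so the cell $c_\sigma(t)$ on the $\sigma$-side of that face is well defined, and for small $s$ the point $\g_s(t)$ lies in the interior of $c_\sigma(t)$, giving $\co\left(\g_s\left(t\right)\right)=X_{c_\sigma(t)}$. By the same pointwise-plus-dominated-convergence argument as in Lemma \ref{continuite} (the exceptional times where $\g$ meets a face of codimension $\geqslant 2$ form a finite set because $\g$ is not contained in any hyperplane in the remaining directions), this yields $m^T_{\g_s}\left(\omega_\varepsilon^n\right)\longrightarrow\frac1T\int_0^T X_{c_\sigma(t)}\,\text{d}t$ as $s\to 0^+$. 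Finally, since $\omega_\varepsilon^n$ is a union of \emph{closed} cells and $c_\sigma(t)$ is one of the cells adjacent to $\g(t)$, we have $\g(t)\in c_\sigma(t)$, so $\co\left(\g\left(t\right)\right)\geqslant X_{c_\sigma(t)}$ for almost every $t$; integrating gives $\frac1T\int_0^T X_{c_\sigma(t)}\,\text{d}t\leqslant\m$. Hence $\inf_{\g'\in\Gamma\backslash\Gamma_H}m^T_{\g'}\left(\omega_\varepsilon^n\right)\leqslant\lim_{s\to 0^+}m^T_{\g_s}\left(\omega_\varepsilon^n\right)\leqslant\m$, which is exactly what was required.

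I expect the main obstacle to be the bookkeeping in the perturbation step when $\g$ is contained in several grid hyperplanes at once (so that $I$ has more than one element, e.g. $\g$ travels along an edge where faces of different types meet): one must verify that translating in all the $I$-directions simultaneously genuinely removes $\g_s$ from $\Gamma_H$ while keeping the sequence of crossed cells stable as $s\to 0^+$, and that the set of times at which $\g$ touches a face of codimension $\geqslant 2$ is finite so that the dominated convergence argument is legitimate. Once these points are secured, the side-cell inequality together with the closedness of the cells makes the conclusion immediate.
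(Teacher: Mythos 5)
Your proposal is correct and follows essentially the same route as the paper: translate a geodesic $\g\in\Gamma_H$ slightly in the direction(s) normal to the containing grid hyperplane(s) to land in $\Gamma\backslash\Gamma_H$, then use the closedness of the cells (a point of the hyperplane belongs to the closed cells on both sides, so $X_{c_\sigma(t)}\leqslant\co\left(\g\left(t\right)\right)$) to conclude that the translate spends at most $\m$ in $\omega_\varepsilon^n$. The only differences are stylistic: the paper gets the comparison as an exact finite-sum identity ($X_l\leqslant Y_l$ for a fixed translate $\g_t$, $t\in\left]0,\frac{1}{n}\right[$) rather than via your $s\to0^+$ dominated-convergence limit, and it only treats a single containing hyperplane, so your explicit handling of the case where $\g$ lies in several hyperplanes at once (your set $I$) is, if anything, more complete than the paper's.
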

	
	\begin{proof}
		Let $\g\in \Gamma_H$. Then, there exist $i\in \llbracket 1, d \rrbracket$ and $j\in \N^*$ such that $\g$  is entirely contained within the $j$-th hyperplane of type $i$, denoted as $f^i_j$. Let $c_1, \dots, c_N$ be the cells encountered by $\g$ on one side of the hyperplane $f^i_j$, and $c_1^\prime, \dots, c_N^\prime$ be the cells encountered on the other side of the hyperplane $f^i_j$ (see Figure \ref{restgg} left). We then have the following equality: $\m=\sum\limits_{l=1}^{N}\frac{t_l\left(\g\right)}{T}Y_l$ where, for all $l\in \llbracket 1, N \rrbracket$, $Y_l$  is the random variable defined as $Y_l=1$ if $c_l\in \omega_\varepsilon^n$ or $c_l^\prime\in \omega_\varepsilon^n$, and $Y_l=0$ otherwise, and $t_l\left(\g\right)$ denotes the time spent by $\g$ in the cell $c_l$ (which is equal to the time spent in the corresponding cell $c_l^\prime$). 
		
		In the example of Figure \ref{restgg}, we have $Y_1=Y_2=Y_3=Y_4=Y_5=Y_6=Y_7=Y_{11}=Y_{13}=Y_{14}=1$ and $Y_8=Y_9=Y_{10}=Y_{15}=0$.
		
		Now, let's consider $\g_t=\tau_{te_i}\circ \g$ for $t\in \left] 0,\frac{1}{n}\right[$, where $\tau_{te_i}$ denotes the translation by the vector $te_i$ (see Figure \ref{restggt}). We have $\mt\in\G$ (where $\G$ is the subset of $\Gamma$ that contains only geodesics propagating on $M$ without encountering any edges of the grid $\cG^n$), and $\mt=\sum\limits_{l=1}^{N}\frac{t_l\left(\g\right)}{T}X_l$ (where, for all $l\in \llbracket 1, N \rrbracket$, $X_l=1$ if $c_l\in \omega_\varepsilon^n$ and $Y_l=0$ otherwise). 
		
		In the example of Figure \ref{restggt}, we have $X_2=X_3=X_4=X_5=X_{11}=X_{13}=X_{14}=1$ and $X_1=X_6=X_7=X_8=X_9=X_{10}=X_{12}=X_{15}=0$. 
		
		Thus, we obtain $\inf\limits_{\g \in \Gamma} \m\geqslant\inf\limits_{\g \in \G} \m$, and since $\G\subset \Gamma$, we have $\inf\limits_{\g \in \Gamma} \m=\inf\limits_{\g \in \G} \m$.
	\end{proof}		

		\begin{center}
			\begin{tikzpicture}[scale=0.5]
				\draw (0.5,2) node[above]{$c_1$};
				\draw (1.5,2) node[above]{$c_2$};
				\draw (2.5,2) node[above]{$c_3$};
				\draw (3.5,2) node[above]{$c_4$};
				\draw (14.5,2) node[above]{$c_{15}$};
				\draw (0.5,0) node[below]{$c_1^{\prime}$};
				\draw (1.5,0) node[below]{$c_2^{\prime}$};
				\draw (2.5,0) node[below]{$c_3^{\prime}$};
				\draw (3.5,0) node[below]{$c_4^{\prime}$};
				\draw (14.5,0) node[below]{$c_{15}^{\prime}$};
				\fill (0,0) rectangle (1,1);
				\fill (3,0) rectangle (4,1);
				\fill (5,0) rectangle (6,1);
				\fill (6,0) rectangle (7,1);
				\fill (12,0) rectangle (13,1);
				\fill (1,1) rectangle (2,2);
				\fill (2,1) rectangle (3,2);
				\fill (3,1) rectangle (4,2);
				\fill (4,1) rectangle (5,2);
				\fill (10,1) rectangle (11,2);
				\fill (12,1) rectangle (13,2);
				\fill (13,1) rectangle (14,2);
				\draw (0,0) grid (15,2);
				\draw [orange][line width=2pt] (-0.5,1) -- (15.5,1) node[above]{$\g$};
				\fill [color=orange!50](17,0) rectangle (18,1);
				\fill [color=orange!50](20,0) rectangle (21,1);
				\fill [color=orange!50](22,0) rectangle (23,1);
				\fill [color=orange!50](23,0) rectangle (24,1);
				\fill [color=orange!50](29,0) rectangle (30,1);
				\fill [color=orange!50](18,1) rectangle (19,2);
				\fill [color=orange!50](19,1) rectangle (20,2);
				\fill [color=orange!50](20,1) rectangle (21,2);
				\fill [color=orange!50](21,1) rectangle (22,2);
				\fill [color=orange!50](27,1) rectangle (28,2);
				\fill [color=orange!50](29,1) rectangle (30,2);
				\fill [color=orange!50](30,1) rectangle (31,2);
				\draw (17,0) grid (32,2);
				\draw [orange][line width=2pt] (16.5,1) -- (32.5,1) node[above]{$\g$};
			\end{tikzpicture}
			
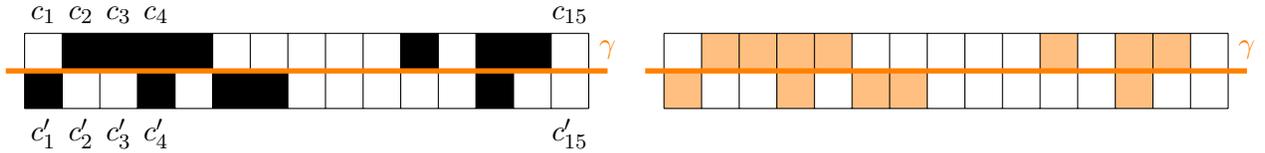
\captionof{figure}{Geodesic $\g\in\Gamma_H$. The orange cells represent the cells of $\omega_\varepsilon^n$ that $\g$ encounters. In this example, we have $Y_1=Y_2=Y_3=Y_4=Y_5=Y_6=Y_7=Y_{11}=Y_{13}=Y_{14}=1$ and $Y_8=Y_9=Y_{10}=Y_{15}=0$.}\label{restgg}
			
		\end{center}
		
		\begin{center}
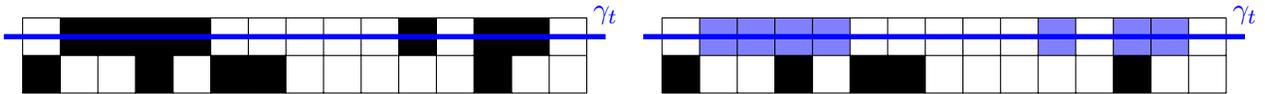

			\begin{tikzpicture}[scale=0.5]
				\fill (0,0) rectangle (1,1);
				\fill (3,0) rectangle (4,1);
				\fill (5,0) rectangle (6,1);
				\fill (6,0) rectangle (7,1);
				\fill (12,0) rectangle (13,1);
				\fill (1,1) rectangle (2,2);
				\fill (2,1) rectangle (3,2);
				\fill (3,1) rectangle (4,2);
				\fill (4,1) rectangle (5,2);
				\fill (10,1) rectangle (11,2);
				\fill (12,1) rectangle (13,2);
				\fill (13,1) rectangle (14,2);
				\draw (0,0) grid (15,2);
				\draw [blue][line width=2pt] (-0.5,1.5) -- (15.5,1.5) node[above]{$\g_t$};
				\fill (17,0) rectangle (18,1);
				\fill (20,0) rectangle (21,1);
				\fill (22,0) rectangle (23,1);
				\fill (23,0) rectangle (24,1);
				\fill (29,0) rectangle (30,1);
				\fill [color=blue!50](18,1) rectangle (19,2);
				\fill [color=blue!50](19,1) rectangle (20,2);
				\fill [color=blue!50](20,1) rectangle (21,2);
				\fill [color=blue!50](21,1) rectangle (22,2);
				\fill [color=blue!50](27,1) rectangle (28,2);
				\fill [color=blue!50](29,1) rectangle (30,2);
				\fill [color=blue!50](30,1) rectangle (31,2);
				\draw (17,0) grid (32,2);
				\draw [blue][line width=2pt] (16.5,1.5) -- (32.5,1.5) node[above]{$\g_t$};
			\end{tikzpicture}
			\captionof{figure}{Geodesic $\g_t\in\Gamma\backslash\Gamma_H$, translated from $\g$. The blue cells represent the cells of $\omega_\varepsilon^n$ that $\g_t$ encounters. In this example, we have $X_2=X_3=X_4=X_5=X_{11}=X_{13}=X_{14}=1$ and $X_1=X_6=X_7=X_8=X_9=X_{10}=X_{12}=X_{15}=0$.} \label{restggt}
		\end{center}
		

	Thus, as a consequence of Lemma \ref{restg} it suffices to consider the geodesics from $\Gamma\backslash\Gamma_H$.

	\section{Proof of \ref{thii}}\label{secpr}
	
	We recall that by $\G$\label{gammachapeau}, we denote the subset of $\Gamma$ that contains only geodesics propagating on $M$ without encountering any edges of the grid $\cG^n$.
	
	Let $\cP_d$ be the set of sequences of cells $\left(c_1, \dots, c_k\right)$ such that there exists a geodesic $\g\in\G$ that traverses the cells $c_1, \dots, c_k$ in this order, and only those cells.
	
	We divide $\G$ into $\#\cP_d$ subsets $\G_p\neq \emptyset$, $p=\left(c_1, \dots, c_k\right)\in\cP_d$, as follows: for all $\g\in\G_p$, $\g$ traverses the cells $c_1, \dots, c_k$ in this order, and only those cells. 
	Thus, two elements belonging to the same $\G_p$ encounter the same cells of the grid $\cG^n$ in the same order. For any $p\in\cP_d$, we consider $\g_p\in \G_p$ as a representative of the class $\G_p$.
	
	
	\subsection{Number of classes}
	
	The objective of this subsection is to evaluate the number of classes $\G_p$, $p\in\cP_d$, and more particularly to show that this number is polynomial in $n$.

	\begin{lemme}\label{ppoly}
		$\#\cP_d$ is polynomial in $n$.
	\end{lemme}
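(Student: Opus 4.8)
\emph{Plan.} The idea is to view the cell sequence $(c_1,\dots,c_k)$ as a function of the position–direction data of the geodesic, to show it is locally constant away from a controlled family of ``walls'', and to bound the number of resulting regions by a standard arrangement estimate. Write $\g(t)=x+tv$ for $t\in[0,T]$, with $x\in[0,1)^d$ and $v\in S^{d-1}$ (unit speed). Since $T\in(0,1)$ the Euclidean length of the trajectory is $T<1$, so each coordinate $x_i+tv_i$ sweeps an interval of length $<1$; consequently $\g$ meets only $O(n^d)$ cells, and the type-$i$ hyperplanes $\{x_i=m/n\}$ it can cross correspond to integers $m$ lying in a range of size $O(n)$. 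For $\g\in\G$ the crossings occur one at a time, so $(c_1,\dots,c_k)$ is determined by $c_1$ together with the time-ordered list of crossing events, each labelled by its type $i$. The crossings of type $i$ occur at the times $t^i_m=(m/n-x_i)/v_i$, which for fixed $i$ form an arithmetic progression, and there are at most $nT+1=O(n)$ of them in $[0,T]$.

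\emph{Reduction to a merge order.} On the region where the signs of $v_1,\dots,v_d$ are fixed, each coordinate index varies monotonically along $\g$, so beyond $c_1$ the only remaining information in the cell sequence is the interleaving (merge order) of the $d$ arithmetic progressions $\{t^i_m\}_i$. This merge order is determined by the pairwise comparisons between $t^i_m$ and $t^j_{m'}$. Since $v_i,v_j>0$, such a comparison flips exactly on the locus $v_j(m/n-x_i)=v_i(m'/n-x_j)$, that is, precisely when $\g$ passes through the codimension-$2$ edge $\{x_i=m/n\}\cap\{x_j=m'/n\}$ of the grid $\cG^n$. Together with the loci where $x$ or $x+Tv$ meets a grid hyperplane (which govern the first and last cells), these are the only events that can change $(c_1,\dots,c_k)$.

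\emph{Arrangement bound (main step).} I regard the cell sequence as a function on the parameter space $\Omega=\{(x,v)\}$, a bounded-degree variety of dimension $2d-1$. The walls above are zero sets of polynomials of degree $\le 2$ in $(x,v)$: the edge walls $v_j(m/n-x_i)-v_i(m'/n-x_j)=0$, of which there are $O(n^2)$ (constantly many pairs $(i,j)$, and $O(n)$ choices of each of $m,m'$ in the relevant range); the start walls $\{x_i=m/n\}$ and end walls $\{x_i+Tv_i=m/n\}$, $O(n)$ each; and the octant walls $\{v_i=0\}$, $O(1)$ of them. Thus $\Omega$ is cut by $N=O(n^2)$ algebraic hypersurfaces of bounded degree, and by the standard cell-count for such an arrangement (Milnor--Thom, or linearization followed by the hyperplane-arrangement estimate) the number of connected components of the complement is $O\!\left(N^{2d-1}\right)=O\!\left(n^{2(2d-1)}\right)$. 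On each component no comparison flips and no endpoint crosses a hyperplane, so the cell sequence is constant there; every $\g\in\G$ avoids all walls and hence lies in some component, so each class of $\cP_d$ is the value of the cell sequence on some component. Therefore $\#\cP_d=O\!\left(n^{2(2d-1)}\right)$, which is polynomial in $n$.

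\emph{Expected difficulty.} The delicate points are confined to the arrangement step: one must check that the integers $m,m'$ stay in an $O(n)$ range, which is exactly where the bound $T<1$ on the trajectory length enters, and one must invoke a correct cell-count for bounded-degree hypersurfaces in a space of constant dimension. The reduction to a merge order and the identification of the flip loci with passages through grid edges are elementary bookkeeping. This is precisely the higher-dimensional form of counting how a line separates $n$ points in the plane, which is the $d=2$ version of the estimate.
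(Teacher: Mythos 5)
Your proof is correct in substance but takes a genuinely different route from the paper's. The paper proceeds by two reductions: first, a class in $\cP_d$ is determined by the $\binom{d}{2}$ pairwise interleavings of the hyperplane types, so $\#\cP_d\leqslant(\#\cP_2)^{\binom{d}{2}}$ and only $d=2$ has to be treated; second, for $d=2$, after fixing the start and end cells, Claim \ref{cnsclass} identifies a class with the way the segment separates the grid vertices lying between those cells, so the count reduces to Lemma \ref{sepnpt}, a hand-made sweep argument bounding the number of ways a line can strictly separate $N$ points by $\left(\binom{N}{2}+1\right)\left(N+1\right)$, applied with $N=n^2$. You instead work once and for all in the $(2d-1)$-dimensional parameter space $(x,v)$ and bound the number of components of the complement of $O(n^2)$ degree-$\leqslant 2$ walls by a Milnor--Thom cell count; your quadric flip loci $v_j(m/n-x_i)=v_i(m'/n-x_j)$ are exactly the paper's grid edges, and your arrangement bound plays the role that Lemma \ref{sepnpt} plays in the paper --- indeed that lemma is the same kind of cell count, carried out explicitly in the $2$-parameter space of lines (a sweep over the $O(N^2)$ critical directions, then a translation sweep giving the factor $N+1$). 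Your route buys uniformity in $d$ with no interleaving reduction; the paper's buys elementarity (no semialgebraic cell-counting theorem) at the price of the reduction step and the exponent $\binom{d}{2}$.

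One assertion needs a patch: ``every $\g\in\G$ avoids all walls'' is not literally true. Membership in $\G$ excludes the edge (quadric) walls, since a tie $t^i_m=t^j_{m'}$ in $(0,T)$ means encountering an edge of $\cG^n$, but a geodesic of $\G$ may perfectly well satisfy a start or end wall equation exactly, i.e. $\g(0)$ or $\g(T)$ may lie on a grid hyperplane, or one may have $v_i=0$; such a geodesic lies on your wall set, and when, say, $\g(0)$ sits on the face through which it enters its first cell \emph{and} $\g(T)$ sits on the face through which it would exit its last cell, every nearby off-wall parameter realizes a strictly longer cell sequence (a perturbation prepends or appends a cell), so that class is realized only on the walls and is missed by your component count. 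The fix is cheap: the interior crossing times of a $\G$-geodesic are strictly ordered and remain so under small perturbations, so the class of any $\g\in\G$ coincides with the class of an adjacent open component with possibly the first and/or last cell deleted; the map sending (component, two deletion bits) to a class is therefore onto $\cP_d$, giving $\#\cP_d\leqslant 4\cdot O\left(n^{2(2d-1)}\right)$, and your polynomial bound survives. (A minor remark: with $T<1$ a geodesic in fact meets only $O(dn)$ cells, not merely $O(n^d)$; this looseness affects nothing.)
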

	
	\begin{lemme}
		If Lemma \ref{ppoly} is true for $d=2$, then it is true for any $d\geqslant 2$.
	\end{lemme}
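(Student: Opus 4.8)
The plan is to reduce the enumeration in dimension $d$ to the enumeration of the planar itineraries of the $\binom{d}{2}$ coordinate projections of a geodesic, thereby appealing to the case $d=2$. I would first record a purely combinatorial description of a class. Fix $\g\in\G$ with direction $v$, and recall $T\in(0,1)$, so that $\g$ crosses each cell at most once; since $\g$ meets no edge of $\cG^n$, it never crosses two hyperplanes of different types simultaneously, so each passage from one cell of its itinerary to the next is the crossing of a single hyperplane of some type $i\in\llbracket 1,d\rrbracket$, always in the fixed direction $\mathrm{sign}(v_i)\,e_i$. Consequently a class $p=(c_1,\dots,c_k)\in\cP_d$ is entirely determined by, and determines, the pair consisting of its starting cell $c_1\in\llbracket 1,n\rrbracket^d$ together with the word $w(\g)=(\iota_1,\dots,\iota_{k-1})$ over the alphabet $\llbracket 1,d\rrbracket$ recording the type of the hyperplane crossed at each step. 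In short, a class is the same data as (starting cell, type-word).

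Next I would exploit the projections. For each pair $i<j$, the orthogonal projection of $\g$ onto the $(e_i,e_j)$-plane is a geodesic on $\T^2$, traversed over a time $T\sqrt{v_i^2+v_j^2}\leqslant T<1$ and meeting no vertex of the two-dimensional grid; its cell itinerary is thus a planar class $\pi_{ij}(p)$, whose starting cell is $(c_1^i,c_1^j)$ and whose type-word is exactly $w(\g)$ restricted to the two letters $\{i,j\}$. The heart of the matter is then an elementary reconstruction lemma: \emph{a finite word over $\llbracket 1,d\rrbracket$ is determined by the family of its subwords on each pair of letters.} This I would prove by peeling off the first letter: the first letter $a$ of $w$ is the unique letter of positive count such that, for every other present letter $b$, the subword on $\{a,b\}$ begins with $a$ (two such candidates would contradict each other on their common subword), after which one removes the leading $a$ from $w$ and from each subword on $\{a,\cdot\}$ and iterates. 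Since the starting cell $c_1$ is recovered coordinatewise from the starting cells of the $\pi_{ij}(p)$, this shows that the map $p\mapsto\bigl(\pi_{ij}(p)\bigr)_{1\leqslant i<j\leqslant d}$ is injective on $\cP_d$.

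Injectivity then yields $\#\cP_d\leqslant\prod_{1\leqslant i<j\leqslant d}\#\cP_2=(\#\cP_2)^{\binom{d}{2}}$, up to a bounded bookkeeping factor depending only on $d$ (sign patterns of $v$, and degenerate projections with $v_i=0$, which contribute only axis-parallel planar itineraries). If Lemma \ref{ppoly} holds for $d=2$, that is if $\#\cP_2$ is polynomial in $n$, then the right-hand side is polynomial in $n$ for every fixed $d\geqslant 2$, which is the assertion.

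I expect the genuine difficulties to be two technical points rather than the counting itself. The first is the clean statement and use of the reconstruction lemma, namely ruling out that two distinct $d$-dimensional itineraries project to the same family of planar itineraries; the induction above settles it but must be stated carefully on the torus, where cells are read modulo $n$. The second is verifying that each projection legitimately falls under the scope of the $d=2$ hypothesis: because the projected travel time is only $\leqslant T$, a projected itinerary is in general a \emph{prefix} of a genuine length-$T$ planar itinerary, so one should note that the number of such prefixes is still bounded by $O(n)\cdot\#\cP_2$, hence polynomial, and that degenerate directions $v_i=0$ produce only trivially many planar classes. These caveats aside, the reduction to $d=2$ is exactly the injection $p\mapsto(\pi_{ij}(p))_{i<j}$.
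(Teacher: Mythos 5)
Your proposal is correct and takes essentially the same route as the paper, which likewise bounds $\#\cP_d\leqslant\prod_{i\neq j}\#\cP_d\left(i,j\right)\leqslant\left(\#\cP_2\right)^{\binom{d}{2}}$ by observing that the pairwise interleavings of the hyperplane types determine the full crossing sequence --- exactly your reconstruction lemma, with the pairwise data identified with planar classes. Your write-up is in fact more detailed than the paper's proof, which leaves the peeling argument, the coordinate-plane projections, and the prefix and degenerate-direction caveats implicit.
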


	\begin{proof}
		For any $i, j \in \llbracket 1,d\rrbracket$ such that $i\neq j$, we denote $\cP_d\left(i,j\right)$ the set of ways to place hyperplanes of type $i$ with respect to those of type $j$ on the trajectory of a geodesic of $\G$. We assume that $\#\cP_2=\#\cP_d\left(2,1\right)$ is polynomial in $n$. Then, $\#\cP_d \leqslant \prod\limits_{i\neq j} \#\left(\cP_d\left(i,j\right)\right)$.
		Indeed, if we fix the order of hyperplanes of type $i$ with respect to those of type $j$, the order of hyperplanes of type $i$ with respect to those of type $k$, and the order of hyperplanes of type $j$ with respect to those of type $k$, then the sequence formed by hyperplanes of type $i$, $j$, and $k$ is determined.
		Then, $\#\cP_d \leqslant \left(\#\cP_d\left(2,1\right)\right)^{\binom{d}{2}}$ is polynomial in $n$.
	\end{proof}
	
We now consider the case where $d=2$.	
	
Let $(i,j)$ be a vertex of the grid $\cG_n$. We will denote $C(i,j)$ as the cell of the grid $\cG_n$ whose lower-left vertex is $(i,j)$ (see Figure \ref{notcel}).

\begin{center}
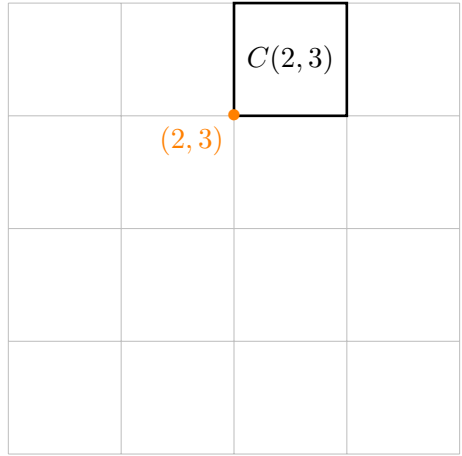

	\begin{tikzpicture}[scale=1.5]
		\draw[gray,opacity=0.5] (0,0) grid (4,4);
		\draw[line width=1pt] (2,3) rectangle (3,4);
		\draw (2.5,3.5) node{$C(2,3)$};
		\draw[orange] (2,3) node{$\bullet$} node[below left]{$(2,3)$};
	\end{tikzpicture}
	\captionof{figure}{Notation for the cells of $\cG^n$.}\label{notcel}
\end{center}

Consider two vertices $(i,j)$ and $(i',j')$ of the grid $\cG_n$. Without loss of generality, we assume that $i'>i$ and $j'>j$. We denote $c=C(i,j)$ and $c'=C(i',j')$.

\begin{center}
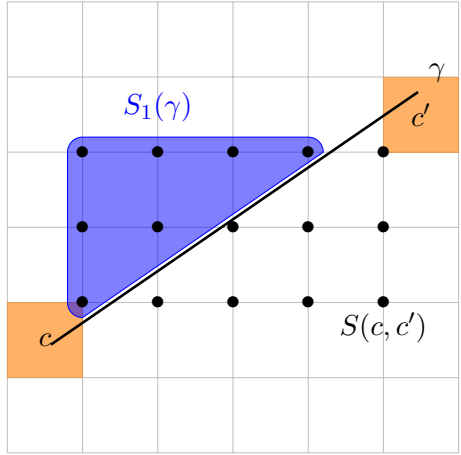

	\begin{tikzpicture}
		\draw[gray,opacity=0.5] (0,0) grid (6,6);
		\fill[orange, opacity=0.6] (0,1) rectangle (1,2);
		\fill[orange, opacity=0.6] (5,4) rectangle (6,5);
		\draw (0.5,1.5) node{$c$};
		\draw (5.5,4.5) node{$c'$};
		\draw[blue] (2,4.6) node{$S_1(\g)$};
		\draw[blue] (0.8,2)--(0.8,4) arc (180:90:0.2) -- (4,4.2) arc (90:0:0.2) -- (1,1.8) arc (270:180:0.2);
		\fill[blue, opacity=0.5] (0.8,2)--(0.8,4) arc (180:90:0.2) -- (4,4.2) arc (90:0:0.2) -- (1,1.8) arc (270:180:0.2);
		\draw[line width=1pt] (0.58,1.44)--(5.46,4.8) node[above right]{$\g$};
		\draw (1,2) node{$\bullet$};
		\draw (1,3) node{$\bullet$};
		\draw (1,4) node{$\bullet$};
		\draw (2,2) node{$\bullet$};
		\draw (2,3) node{$\bullet$};
		\draw (2,4) node{$\bullet$};
		\draw (3,2) node{$\bullet$};
		\draw (3,3) node{$\bullet$};
		\draw (3,4) node{$\bullet$};
		\draw (4,2) node{$\bullet$};
		\draw (4,3) node{$\bullet$};
		\draw (4,4) node{$\bullet$};
		\draw (5,2) node{$\bullet$} node[below]{$S(c,c')$};
		\draw (5,3) node{$\bullet$};
		\draw (5,4) node{$\bullet$};
	\end{tikzpicture}
	\captionof{figure}{Example of fixed starting and ending cells.}\label{deparr}
\end{center}

We denote
\begin{equation*}
	\cP(c,c')=\left\{p \in \cP \, \middle| \, \forall \g \in \G_p, \, \g(0)\in c \text{ and } \g(T)\in c'\right\},
\end{equation*} 
the set of classes of geodesics that start at position $c$ and end at position $c'$. We denote
\begin{equation*}
	\G(c,c')=\left\{ \g \in \G \, \middle| \, \exists \, p \in \cP(c,c') , \, \g \in \G_p \right\},
\end{equation*} 
the set of geodesics in $\G$ that start at position $c$ and end at position $c'$.

\begin{proposition}
	$\#\cP(c,c')$ is polynomial in $n$.
\end{proposition}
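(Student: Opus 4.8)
The plan is to show that a class $p\in\cP(c,c')$ is entirely encoded by a small amount of combinatorial data — the order in which a representative geodesic crosses the vertical and horizontal grid lines between $c$ and $c'$ — and then to bound the number of admissible such orders by a line-arrangement argument. This is exactly where the hint about ``separating points in the plane with a line'' enters.

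First I would fix coordinates so that the grid lines sit at integer positions and $C(i,j)=[i,i+1]\times[j,j+1]$. Since $T\in(0,1)$, every geodesic crosses each cell at most once, so any $\g\in\G(c,c')$ is (the unfolding of) a straight segment from $c=C(i,j)$ to $c'=C(i',j')$ with $i'>i$ and $j'>j$. Along such a segment both coordinates increase strictly, so the cell it occupies changes exactly when it crosses one of the vertical lines $x=i+1,\dots,i'$ or one of the horizontal lines $y=j+1,\dots,j'$; moreover the $k$-th vertical crossing is always the line $x=i+k$ and the $\ell$-th horizontal crossing is always $y=j+\ell$. Consequently the whole cell sequence — hence the class of $\g$ — is determined by a single datum: the \emph{interleaving} of the $V:=i'-i$ vertical crossings with the $H:=j'-j$ horizontal crossings.

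The key step, based on Thales' theorem exactly as recalled after Figure \ref{hyp}, is that this interleaving is itself governed by finitely many linear comparisons. Writing the segment as $y=sx+u$, monotonicity shows that the crossing of $x=a$ precedes the crossing of $y=b$ precisely when the segment passes below the lattice point $(a,b)$, i.e. when $b-sa-u>0$. Hence the class of $\g$ depends only on the sign pattern $\big(\mathrm{sgn}(b-sa-u)\big)$ taken over the $m:=VH\leqslant n^2$ lattice points $(a,b)$ with $a\in\{i+1,\dots,i'\}$ and $b\in\{j+1,\dots,j'\}$. (Geodesics of $\G$ never pass through a lattice point, so these strict signs are well defined.) Distinct classes of $\cP(c,c')$ therefore inject into the set of realizable sign patterns.

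Finally I would count these patterns by passing to the dual $(s,u)$-plane: each lattice point $(a,b)$ contributes the line $u=b-sa$, and the sign of $b-sa-u$ is constant on each face of the arrangement of these $m$ lines. An arrangement of $m$ lines has at most $1+m+\binom{m}{2}$ faces of all dimensions, so the number of realizable sign patterns, and hence $\#\cP(c,c')$, is at most $1+m+\binom{m}{2}=\mathrm{O}(n^4)$, which is polynomial in $n$. The main obstacle — deserving care rather than calculation — is the rigorous justification that the class is determined by exactly this sign pattern (the Thales comparison, and the claim that no further information about the segment is needed), together with checking that the degenerate alignments, which correspond to geodesics hitting edges and are thus excluded from $\G$, create no classes beyond those already counted among the faces of the arrangement.
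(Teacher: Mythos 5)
Your proof is correct, and its first half is essentially the paper's: the observation that the class of $\g\in\G(c,c')$ is encoded by the sign pattern $\mathrm{sgn}\left(b-sa-u\right)$ over the lattice points of the rectangle is precisely Claim \ref{cnsclass}, which says that two geodesics of $\G(c,c')$ lie in the same class if and only if they separate the vertices of $S(c,c')$ in the same way (your condition $b-sa-u>0$ is exactly $t_a^V(\g)<t_b^H(\g)$, and your monotonicity remarks are the same ones the paper uses to recover the interleaving from the pairwise comparisons). Where you genuinely diverge is the counting step. The paper reduces to Lemma \ref{sepnpt} --- the number of linear separations of $m$ points in the plane --- and proves it by a primal sweep over directions: at most $\binom{m}{2}$ critical slopes, at most $m+1$ partitions per slope, giving a bound of order $m^3$, i.e. $O(n^6)$ for the $m\leqslant n^2$ lattice points. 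You instead pass to the dual $(s,u)$-plane and invoke the region bound $1+m+\binom{m}{2}$ for an arrangement of $m$ lines, giving $O(n^4)$. Both bounds are polynomial, which is all the proposition needs; your duality argument is the more standard one and is quantitatively sharper, while the paper's sweep is more elementary and self-contained. Two minor points of care, neither a real gap: the quantity $1+m+\binom{m}{2}$ counts the two-dimensional regions of the arrangement rather than the faces of all dimensions, but this is harmless since geodesics of $\G$ avoid lattice points, so their dual points lie in open regions; and for your injection to be well defined on classes you also need the forward implication (same class implies same sign pattern), which follows at once from the monotonicity of the crossings, as in the first half of the paper's proof of Claim \ref{cnsclass}.
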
 

\begin{proof}
	Counting the sequences of cells $p \in \cP$ taken by the geodesics of $\G$ amounts to counting the sequences of hyperplanes in $\cG^n$ crossed by them. Let $S(c,c')=\left\{(k,l)\in S \, \middle| \, i < k \leqslant i' \text{ and } j < l \leqslant j'\right\}$ be the set of vertices in the grid $\cG^n$ contained in the rectangle formed by the cells $c=C(i,j)$ and $c'=C(i',j')$.
	For any $\g\in\G$, let $t_k^V(\g)$ be the time at which $\g$ intersects the $k$-th vertical hyperplane (type 1 hyperplane) of the grid $\cG^n$, and let $t_l^H(\g)$ be the time at which $\g$ intersects the $l$-th horizontal hyperplane (type 2 hyperplane) of the grid $\cG^n$ for any $k\in \llbracket i+1, i' \rrbracket$ and $l\in \llbracket j+1, j' \rrbracket$.
	Define $S_1(\g)=\left\{(k,l)\in S(c,c') \, \middle| \, t_k^V(\g)<t_l^H(\g)\right\}$ as the set of vertices in $S(c,c')$ located in the upper-left triangle delimited by $\g$, and similarly, let $S_2(\g)=\left\{(k,l)\in S(c,c') \, \middle| \, t_k^V(\g)>t_l^H(\g)\right\}$ be the set of vertices in $S(c,c')$ located in the lower-right triangle delimited by $\g$.
	Thus, $S_1(\g)$ and $S_2(\g)$ form a partition of the set of vertices $S(c,c')$.
	
	\begin{claim}\label{cnsclass}
		Let $\g, \g^\prime \in\G(c,c')$. Then, $\g$ and $\g^\prime$ are in the same class if and only if they separate the vertices of the grid $\cG_n$ in the same way, i.e., if and only if $S_1(\g)=S_1(\g^\prime)$.
	\end{claim}
	
	\begin{proof}
		Let's assume that $\g$ and $\g^\prime$ are in the same class. Let's show that $S_1(\g)=S_1(\g^\prime)$.
		Take $(k,l)\in S_1(\g)$. We want to demonstrate that $(k,l)\in S_1(\g^\prime)$, i.e., $t_k^V(\g^\prime)<t_l^H(\g^\prime)$. 
		By assumption, $(k,l)\in S_1(\g)$, so $t_k^V(\g)<t_l^H(\g)$.
		Therefore, $\g$ intersects the $k$-th vertical hyperplane before encountering the $l$-th horizontal hyperplane of the $\cG^n$ grid. Now, $\g$ and $\g^\prime$ are in the same class by assumption, meaning these two geodesics intersect the same hyperplanes of $\cG^n$ and in the same order. Thus, $\g^\prime$ intersects the $k$-th vertical hyperplane before encountering the $l$-th horizontal hyperplane of the $\cG^n$ grid. Hence, $t_k^V(\g^\prime)<t_l^H(\g^\prime)$.
		Therefore, $S_1(\g)\subset S_1(\g^\prime)$, and then $S_1(\g)=S_1(\g^\prime)$ because $\g$ and $\g'$ play symmetric roles.
		
		Conversely, let's assume that $S_1(\g)=S_1(\g^\prime)$. Let's show that $\g$ and $\g^\prime$ are in the same class, meaning these two geodesics intersect the same hyperplanes of $\cG^n$ and in the same order.
		Since $\g, \g^\prime \in\G(c,c')$, it is obvious that $\g$ and $\g^\prime$ intersect the same hyperplanes of the $\cG^n$ grid, i.e., the $i$-th to $i'$-th vertical hyperplanes and the $j$-th to $j'$-th horizontal hyperplanes of the $\cG^n$ grid. It remains to show that $\g$ and $\g^\prime$ intersect them in the same order.
		Moreover, since $i'>i$, $\g$ and $\g'$ intersect the vertical hyperplanes from left to right. In other words, for any $k_1, k_2\in \llbracket i+1, i' \rrbracket$, if $k_1<k_2$, then $t^V_{k_1}(\g)<t^V_{k_2}(\g)$ and $t^V_{k_1}(\g')<t^V_{k_2}(\g')$.
		Similarly, since $j'>j$, $\g$ and $\g^\prime$ intersect the horizontal hyperplanes from bottom to top. That is, for any $l_1, l_2\in \llbracket j+1, j' \rrbracket$, if $l_1<l_2$, then $t^H_{l_1}(\g)<t^H_{l_2}(\g)$ and $t^H_{l_1}(\g')<t^H_{l_2}(\g')$.
		Take $k\in \llbracket i+1, i' \rrbracket$ and $l\in \llbracket j+1, j' \rrbracket$, and assume that $t^V_{k}(\g)<t^H_{l}(\g)$. Then, $(k,l)\in S_1(\g)$, by the definition of $S_1(\g)$. Now, $S_1(\g)=S_1(\g^\prime)$ by assumption, so $t^V_{k}(\g')<t^H_{l}(\g')$.
		Similarly, if $t^V_{k}(\g)>t^H_{l}(\g)$, then $t^V_{k}(\g')>t^H_{l}(\g')$, because $S_2(\g)=S_2(\g')$.
		Hence, $\g$ and $\g^\prime$ are in the same class.
	\end{proof}
	
	This proves that $\#\cP(c,c')$ is bounded by the number of ways to strictly separate $n^2$ points with a line in the plane.

	We conclude the proof by applying Lemma \ref{sepnpt}: the number of ways to strictly separate $n^2$ points with a line in the plane is polynomial in $n$, so $\#\cP(c,c')$ is polynomial in $n$.
\end{proof}

	\subsection{Geodesics from the same class}
	
	The objective of this subsection is to control the difference $\left|\m-\mp\right|$ for two geodesics $\g$ and $\g'$ in the same class $\G_p$. The goal is to approximate any $\m$, $\g\in\G_p$, by $\mpp$, where $\g_p$ is a fixed representative of $\G_p$.

	In the following, we fix $p\in\cP_d$.
	
	Let $N_p + 1 \in \N^*$ be the number of cells in the grid $\cG^n$ that are encountered by the geodesics in $\G_p$. We denote $c_0$ as the starting cell for the geodesics in $\G_p$, and for each $i\in \llbracket 1,N_p \rrbracket$, $c_i$, the $\left(i+1\right)$-th cell encountered by the geodesics in $\G_p$.
	
	For any $\gamma\in\G_p$, for each $i\in \llbracket 1,N_p \rrbracket$, we denote $\si$ as the time at which $\gamma$ intersects its $i$-th hyperplane. Additionally, we set $s_0\left(\g\right)=0$ and $s_{N_p+1}\left(\g\right)=T$. For any $\gamma\in\G_p$ and $i\in \llbracket 1,d \rrbracket$, we define $k_i + 1\in\llbracket 1,N_p \rrbracket$ as the number of hyperplanes of type $i$ that $\gamma$ intersects. This quantity depends only on $p$ and not on the choice of $\gamma$ in $\G_p$. We then denote, for each $j\in \llbracket 0,k_i \rrbracket$, $\tij$ as the time at which $\gamma$ intersects its $j+1$-th hyperplane of type $i$. It should be noted that for any $\g \in \G_p$ $\left\{\si \; \middle| \; i \in \llbracket 1, N_p \rrbracket\right\} = \left\{\tij \; \middle| \; i \in \llbracket 1, N_p \rrbracket\right\}$, and more precisely, there exists a bijection $\phi_p : \llbracket 1, N_p \rrbracket \longrightarrow \left\{\left(i, j\right) \; \middle| \; i \in \llbracket 1, d\rrbracket, \; j \in \llbracket 0, k_i \rrbracket \right\}$ such that for any $\g\in\G_p$, for any $i \in \llbracket 1, d \rrbracket$, $\si=t_{\phi_p\left(i\right)}\left(\g\right)$, see Figure \ref{defst}.
	
	\begin{center}
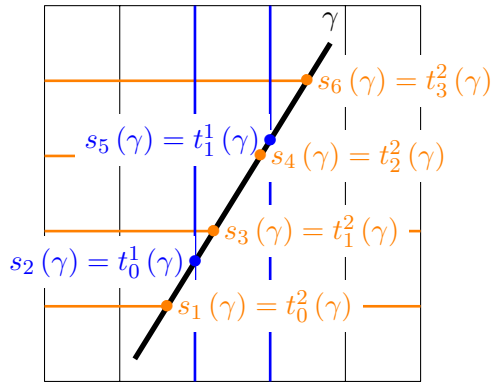


		\begin{tikzpicture}
			\draw (0,0) grid (5,5);
			\draw[orange][line width=1pt] (0,1) -- (5,1);
			\draw[orange][line width=1pt] (0,2) -- (5,2);
			\draw[orange][line width=1pt] (0,3) -- (5,3);
			\draw[orange][line width=1pt] (0,4) -- (5,4);
			\draw[blue][line width=1pt] (2,0) -- (2,5);
			\draw[blue][line width=1pt] (3,0) -- (3,5);
			\draw[orange] (1.63,1) node {$\bullet$} node[right][fill=white]{$\sf{1}=\tf{2}{0}$};
			\draw[orange] (2.25,2) node {$\bullet$} node[right][fill=white]{$\sf{3}=\tf{2}{1}$};
			\draw[orange] (2.87,3) node {$\bullet$} node[right][fill=white]{$\sf{4}=\tf{2}{2}$};
			\draw[orange] (3.49,4) node {$\bullet$} node[right][fill=white]{$\sf{6}=\tf{2}{3}$};
			\draw[blue] (2,1.59) node {$\bullet$}  node[left][fill=white]{$\sf{2}=\tf{1}{0}$};
			\draw[blue] (3,3.21) node {$\bullet$}  node[left][fill=white]{$\sf{5}=\tf{1}{1}$};
			\draw[line width=2pt] (1.2,0.3) -- (3.8,4.5) node[above]{$\g$};
			\draw[orange] (1.63,1) node {$\bullet$};
			\draw[orange] (2.25,2) node {$\bullet$};
			\draw[orange] (2.87,3) node {$\bullet$};
			\draw[orange] (3.49,4) node {$\bullet$};
			\draw[blue] (2,1.59) node {$\bullet$};
			\draw[blue] (3,3.21) node {$\bullet$};
		\end{tikzpicture}
		\captionof{figure}{Illustration of the notations on a geodesic $\g$ in dimension $d=2$.}\label{defst}
	\end{center}
	
	We define $N_p+1$ random variables $\left(X_i\right)_{i\in\llbracket 0,N_p \rrbracket}$ as follows: for any $i\in \llbracket 0,N_p \rrbracket$, $X_i=$ $\begin{cases} 1 ~~ \text{if} ~~ c_i\in\omega_\varepsilon^n\\
		0 ~~ \text{otherwise} ~~ \end{cases}$. The $\left(X_i\right)_{i\in\llbracket 0,N_p \rrbracket}$ are therefore independent random variables, identically distributed as Bernoulli random variables with parameter $\varepsilon$.
	
	Let $\g\in\G_p$, we can then write :
	
	\begin{align*}
		\m&=\frac{1}{T}\sum\limits_{i=0}^{N_p}\left(\sii-\si\right)X_i =\frac{1}{T}\sum\limits_{i=0}^{N_p}\sii X_i - \frac{1}{T}\sum\limits_{i=0}^{N_p}\si X_i\\
		&=\frac{1}{T}\sum\limits_{i=0}^{N_p}\sii X_i - \frac{1}{T}\sum\limits_{i=0}^{N_p-1}\sii X_{i+1} =\frac{1}{T}\sum\limits_{i=0}^{N_p-1}\sii \left(X_i-X_{i+1}\right)+X_{N_p+1} \\
		&=\frac{1}{T}\sum\limits_{i=1}^{d}\sum\limits_{j=0}^{k_i}\tij \left(X_{ij}-Y_{ij}\right)+X_{N_p+1},
	\end{align*}
	by grouping the hyperplanes by types and renaming the $\left(X_i, X_{i+1}\right)$ as $\left(X_{ij}, Y_{ij}\right)$.
	We observe that $X_{ij}$ and $Y_{ij}$ do not depend on $\g$ but only on the different encountered hyperplanes and their order, which is the same for every element of $\G_p$.
	
	Let $\g\in\G_p$, let $i\in \llbracket 1,d \rrbracket$. Between two consecutive hyperplanes of type $i$, $\g$ spends a constant time $\ai$ (Thales' theorem).

	\begin{center}
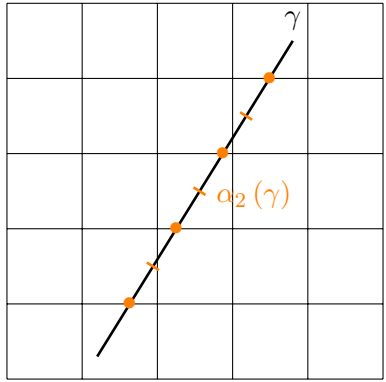


		\begin{tikzpicture}
			\draw (0,0) grid (5,5);
			\draw[line width=1pt] (1.2,0.3) -- (3.8,4.5) node[above]{$\g$};
			\draw[orange] (1.63,1) node {$\bullet$};
			\draw[orange] (2.25,2) node {$\bullet$};
			\draw[orange] (2.87,3) node {$\bullet$};
			\draw[orange] (3.49,4) node {$\bullet$};
			\draw[orange,line width=1pt] (3.1,3.55)--(3.26,3.45);
			\draw[orange,line width=1pt] (1.86,1.55)--(2.02,1.45);
			\draw[orange,line width=1pt] (2.48,2.55)--(2.64,2.45) node[right]{$\alpha_2\left(\g\right)$};
		\end{tikzpicture}
		\captionof{figure}{Example in dimension $d=2$: $\g$ spends the same time $\alpha_2\left(\g\right)$ between two consecutive hyperplanes of type $2$.}
	\end{center}

	We observe that
	\begin{equation}\label{temps1}
		0\leqslant \tio\leqslant\ai ; 
	\end{equation}
	and $0\leqslant T-\tik\leqslant\ai$, where $\tio$ and $\tik$ are respectively the meeting times of the first and last hyperplane of type $i$ encountered by $\g$. Moreover, $k_i\ai\leqslant T=\tio+k_i\ai+T-\tik\leqslant \left(k_i+2\right)\ai$, which implies
	\begin{equation}\label{enca}
		\frac{T}{k_i\left(1+\frac{2}{k_i}\right)}\leqslant \ai\leqslant \frac{T}{k_i} .
	\end{equation}
	Moreover, for any $j\in \llbracket 0,k_i \rrbracket$, 
	\begin{equation}\label{tij}
		\tij=\tio+j\ai .
	\end{equation}

	\begin{lemme}\label{majotmt}
		Let $\g, \g^\prime \in \G_p$. Let $i\in \llbracket 1,d \rrbracket$. For any $j\in \llbracket 0,k_i \rrbracket$, $\left| \tij - \tijp \right| \leqslant \frac{4dT}{N_p}$.
	\end{lemme}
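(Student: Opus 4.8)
The plan is to reduce the estimate for an arbitrary type $i$ to a single \emph{dominant} type, namely the type $i_0$ maximizing $k_{i_0}$, whose hyperplanes are met often enough that their crossing times are pinned down to within $O(T/k_{i_0})$, and then to squeeze every other crossing between two consecutive type-$i_0$ crossings.

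First I would dispose of the trivial regime and record the size of $k_{i_0}$. Since $\phi_p$ is a bijection, $\sum_{i=1}^{d}\left(k_i+1\right)=N_p$, so the largest type satisfies $k_{i_0}+1\geqslant N_p/d$, i.e. $k_{i_0}\geqslant N_p/d-1$. If $N_p<2d$ then $\tfrac{4dT}{N_p}>2T$, whereas $\left|\tij-\tijp\right|\leqslant T$ because both crossing times lie in $[0,T]$; the inequality is then automatic. Hence I may assume $N_p\geqslant 2d$, which gives $k_{i_0}\geqslant 1$ and $N_p-d\geqslant N_p/2$.

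Next I would prove the sharp bound for the dominant type: for every $m\in\llbracket 0,k_{i_0}\rrbracket$,
\begin{equation*}
	\left|t^{i_0}_m(\gamma)-t^{i_0}_m(\gamma')\right|\leqslant \frac{T}{k_{i_0}}.
\end{equation*}
By \eqref{tij} the map $m\mapsto t^{i_0}_m(\gamma)-t^{i_0}_m(\gamma')=\big(t^{i_0}_0(\gamma)-t^{i_0}_0(\gamma')\big)+m\big(\alpha^{i_0}(\gamma)-\alpha^{i_0}(\gamma')\big)$ is affine in $m$, so its modulus is maximized at $m=0$ or $m=k_{i_0}$. At $m=0$, \eqref{temps1} and \eqref{enca} place both $t^{i_0}_0(\gamma)$ and $t^{i_0}_0(\gamma')$ in $[0,T/k_{i_0}]$; at $m=k_{i_0}$, the companion bound $0\leqslant T-t^{i_0}_{k_{i_0}}(\gamma)\leqslant \alpha^{i_0}(\gamma)$ together with \eqref{enca} places both $t^{i_0}_{k_{i_0}}(\gamma)$ and $t^{i_0}_{k_{i_0}}(\gamma')$ in $[T-T/k_{i_0},T]$. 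In either case the difference is at most $T/k_{i_0}$, which settles the claim, and in particular the case $i=i_0$ of the lemma.

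Finally I would transfer this to an arbitrary crossing by bracketing. Writing $\tij=s_\ell(\gamma)$ with $\ell=\phi_p^{-1}(i,j)$, the number $a$ of type-$i_0$ hyperplanes met before the $\ell$-th crossing depends only on the class $\G_p$, so (with the conventions $t^{i_0}_{-1}=0$, $t^{i_0}_{k_{i_0}+1}=T$) one has simultaneously $t^{i_0}_{a-1}(\gamma)\leqslant s_\ell(\gamma)\leqslant t^{i_0}_a(\gamma)$ and the same inequalities for $\gamma'$ with the \emph{same} index $a$. Subtracting the two sandwiches and inserting $t^{i_0}_a(\gamma')$ gives
\begin{equation*}
	s_\ell(\gamma)-s_\ell(\gamma')\leqslant \big(t^{i_0}_a(\gamma)-t^{i_0}_a(\gamma')\big)+\alpha^{i_0}(\gamma')\leqslant \frac{T}{k_{i_0}}+\frac{T}{k_{i_0}}=\frac{2T}{k_{i_0}},
\end{equation*}
and symmetrically for the reverse difference (the boundary cases $a=0$ and $a=k_{i_0}+1$ being even easier since one endpoint is exactly $0$ or $T$). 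Therefore $\left|\tij-\tijp\right|\leqslant 2T/k_{i_0}\leqslant 2dT/(N_p-d)\leqslant 4dT/N_p$. The main obstacle is choosing the right comparison object: the naive expansion of $\tij-\tijp$ in terms of $\tio$ and $\ai$ is useless when $k_i$ is small, and the decisive idea is that every crossing, of whatever type, is trapped between two consecutive crossings of the single dense type $i_0$, whose separating indices are frozen by the class $\G_p$; the affineness-at-the-endpoints argument in the third step is what keeps the constant at $T/k_{i_0}$ instead of a larger multiple.
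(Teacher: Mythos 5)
Your proof is correct and follows essentially the same route as the paper: select the dominant type $i_0$ with $k_{i_0}\gtrsim N_p/d$, sandwich every crossing time between two consecutive type-$i_0$ crossings whose bracketing index is frozen by the class $\G_p$, and control the type-$i_0$ crossing times via the affine formula \eqref{tij} together with \eqref{temps1} and \eqref{enca}. The only deviations are local refinements---your endpoint-maximization of the affine map $m\mapsto t^{i_0}_m(\g)-t^{i_0}_m(\g^\prime)$ yields $T/k_{i_0}$ where the paper's triangle inequality gives $3T/k_{i_0}$, and you treat the count $k_{i_0}\geqslant N_p/d-1$ (consistent with the bijection $\phi_p$) and the boundary brackets explicitly, which the paper glosses over---so you arrive at the same bound $\frac{4dT}{N_p}$ by the same mechanism.
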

	
	\begin{proof}
		We have $\sum\limits_{i=1}^{d}k_i=N_p$, so there exists $i_0\in\llbracket 1,d \rrbracket$ such that $k_{i_0}\geqslant \frac{N_p}{d}$. Let $i\in \llbracket 1,d \rrbracket$. Let $j\in \llbracket 0,k_i \rrbracket$, there exists $j_i\in \llbracket 0, k_{i_0} \rrbracket $ such that $\tioji\leqslant\tij\leqslant\tiojii$. Then, since $\g$ et $\g^\prime$ are in the same $\G_p$, we have $\tiojip\leqslant\tijp\leqslant\tiojiip$. By using \eqref{tij}, \eqref{temps1} and \eqref{enca}, we obtain : 
		\begin{align*}
			\left| \tioji - \tiojip \right| &= \left| \tioo + j_i \aio - \tioop - j_i \aiop \right| \leqslant \left| \tioo - \tioop \right| + j_i \left| \aio - \aiop \right| \\
			&\leqslant \frac{T}{k_{i_0}}+j_i\left(\frac{T}{k_{i_0}}-\frac{T}{k_{i_0}\left(1+\frac{2}{k_{i_0}}\right)}\right) \leqslant \frac{T}{k_{i_0}} + j_i\frac{2T}{k_{i_0}^2}\leqslant \frac{3T}{k_{i_0}}\leqslant \frac{3dT}{N_p}.
		\end{align*}
		Moreover, $\tiojii-\tioji=\aio\leqslant\frac{T}{k_{i_0}}\leqslant \frac{dT}{N_p}$.
		Hence, 
		\begin{align*}
			\tij - \tijp &\leqslant \tiojii - \tiojip \\ 
			&\leqslant \tiojii - \tioji + \tioji - \tiojip \\ 
			&\leqslant \frac{dT}{N_p}+\frac{3dT}{N_p}= \frac{4dT}{N_p}.
		\end{align*} 
		Similarly, $\tijp - \tij \leqslant \frac{4dT}{N_p}$.
	\end{proof}
	
	
	We aim to evaluate $\left|\m-\mp\right|$. To do so, we will separate, in the sum, the types of hyperplanes based on their number of occurrences along the trajectories of $\g$ and $\g'$. In the first sum, we consider those encountered less than $\sqrt{N_p}-1$ times, and in the second sum, we consider those encountered more than $\sqrt{N_p}-1$ times by the geodesics of $\G_p$. We have
	\begin{multline}\label{sumki}
		\left|\m-\mp\right| \leqslant \frac{1}{T}\sum\limits_{i=1}^{d}\left|\sum\limits_{j=0}^{k_i}\left(\tij-\tijp\right) \left(X_{ij}-Y_{ij}\right)\right|\\
		\leqslant \frac{1}{T}\sum\limits_{\substack{i\in \llbracket 1,d \rrbracket \\ k_i\leqslant \sqrt{N_p}-1}}\left|\sum\limits_{j=0}^{k_i}\left(\tij-\tijp\right) \left(X_{ij}-Y_{ij}\right)\right| + \frac{1}{T}\sum\limits_{\substack{i\in \llbracket 1,d \rrbracket \\ k_i > \sqrt{N_p}-1}}\left|\sum\limits_{j=0}^{k_i}\left(\tij-\tijp\right) \left(X_{ij}-Y_{ij}\right)\right| \\
		\leqslant \frac{1}{T}\sum\limits_{\substack{i\in \llbracket 1,d \rrbracket \\ k_i\leqslant      \sqrt{N_p}-1}}\left|\sum\limits_{j=0}^{k_i}\left(\tij-\tijp\right) \left(X_{ij}-Y_{ij}\right)\right| + \frac{1}{T}\sum\limits_{\substack{i\in \llbracket 1,d \rrbracket \\ k_i > \sqrt{N_p}-1}}\left[\left|\tio-\tiop\right|\left|\sum\limits_{j=0}^{k_i}\left(X_{ij}-Y_{ij}\right)\right| \right. \\
		\left. +\left|\ai-\aip\right|\left|\sum\limits_{j=0}^{k_i}j\left(X_{ij}-Y_{ij}\right)\right|\right] ;
	\end{multline}
	by \eqref{tij}.
	
	Let's first focus on the left-hand term of \eqref{sumki}, with the following lemma.
	
	\begin{lemme}\label{petitki}
		Let $i\in \llbracket 1,d \rrbracket$ such that $k_i\leqslant \sqrt{N_p}-1$. Then, for all $\delta>0$, if $N_p\geqslant\frac{2^4d^2}{\delta^2}$,
		\begin{equation*}
			\P\left(\frac{1}{T}\left|\sum\limits_{j=0}^{k_i}\left(\tij-\tijp\right) \left(X_{ij}-Y_{ij}\right)\right|\geqslant \delta\right)=0.
		\end{equation*}
		
	\end{lemme}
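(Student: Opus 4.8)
The plan is to notice that, for a type $i$ that the geodesics of $\G_p$ meet few times, the estimate is \emph{purely deterministic}: I will bound the quantity inside the probability by a non-random number that is itself at most $\delta$, so that the event essentially cannot occur and its probability vanishes. No large-deviation argument is needed here; the only probabilistic input is the trivial bound $\left|X_{ij}-Y_{ij}\right|\leqslant 1$, valid because $X_{ij}$ and $Y_{ij}$ are $\{0,1\}$-valued.

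First I would apply the triangle inequality to move the modulus inside the sum and then bound each factor separately:
\[
\frac{1}{T}\left|\sum_{j=0}^{k_i}\left(\tij-\tijp\right)\left(X_{ij}-Y_{ij}\right)\right|\leqslant \frac{1}{T}\sum_{j=0}^{k_i}\left|\tij-\tijp\right|\left|X_{ij}-Y_{ij}\right|\leqslant \frac{1}{T}\sum_{j=0}^{k_i}\frac{4dT}{N_p},
\]
where the last inequality uses $\left|X_{ij}-Y_{ij}\right|\leqslant 1$ together with the uniform per-term estimate $\left|\tij-\tijp\right|\leqslant\frac{4dT}{N_p}$ supplied by Lemma \ref{majotmt}. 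It then remains to count the summands and invoke the hypotheses: the index $j$ runs over $\llbracket 0,k_i\rrbracket$, so there are $k_i+1$ terms, and the standing assumption $k_i\leqslant\sqrt{N_p}-1$ gives $k_i+1\leqslant\sqrt{N_p}$. Hence the right-hand side above is at most $\frac{1}{T}\cdot\sqrt{N_p}\cdot\frac{4dT}{N_p}=\frac{4d}{\sqrt{N_p}}$, and the condition $N_p\geqslant\frac{2^4d^2}{\delta^2}$ is precisely what forces $\frac{4d}{\sqrt{N_p}}\leqslant\delta$. Thus the random variable inside the probability is bounded above by $\delta$ for \emph{every} realization of the Bernoulli variables, so the probability that it be $\geqslant\delta$ is zero.

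I do not expect a genuine obstacle here: the content of the lemma is conceptual rather than technical. The one point worth stressing is the recognition that the rarely-met hyperplane types carry no real probabilistic content — since the number of such terms is at most $\sqrt{N_p}$ and each time-difference is at most $\frac{4dT}{N_p}$ by Lemma \ref{majotmt}, their total contribution is $O\!\left(1/\sqrt{N_p}\right)$ and falls below $\delta$ deterministically, so the Bernoulli randomness never matters. The only minor subtlety is the boundary of the inequality $\frac{4d}{\sqrt{N_p}}\leqslant\delta$: the bound is strict, hence $\P=0$ is immediate, as soon as $N_p>\frac{2^4d^2}{\delta^2}$, and at the equality threshold one uses that the chain of inequalities above cannot all be equalities for a fixed pair $(\g,\g^\prime)$ in generic position. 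The serious analysis is reserved for the large-$k_i$ terms of \eqref{sumki}, which is where the large-deviation Proposition \ref{gd} will be applied.
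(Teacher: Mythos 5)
Your proof is correct and follows essentially the same route as the paper's: triangle inequality, the uniform per-term bound $\left|\tij-\tijp\right|\leqslant\frac{4dT}{N_p}$ from Lemma \ref{majotmt}, the trivial estimate $\left|X_{ij}-Y_{ij}\right|\leqslant 1$, and the count $k_i+1\leqslant\sqrt{N_p}$, so that the hypothesis $N_p\geqslant\frac{2^4d^2}{\delta^2}$ makes the quantity deterministically at most $\delta$. Your closing remark on the equality threshold is a minor point the paper itself glosses over (it simply writes $\P\left(\frac{4d}{\sqrt{N_p}}\geqslant\delta\right)=0$), and it does not affect the substance of the argument.
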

	
	\begin{proof}
		Let $\delta>0$. By the triangle inequality and Lemma \ref{majotmt}, we have:
		\begin{align*}
			\P\left(\frac{1}{T}\left|\sum\limits_{j=0}^{k_i}\left(\tij-\tijp\right) \left(X_{ij}-Y_{ij}\right)\right|\geqslant \delta\right) 
			&\leqslant \P\left(\frac{1}{T}\sum\limits_{j=0}^{k_i}\left|\tij-\tijp\right| \left|X_{ij}-Y_{ij}\right|\geqslant \delta\right)\\
			&\leqslant \P\left(\frac{4d}{N_p}\sum\limits_{j=0}^{k_i}\left|X_{ij}-Y_{ij}\right|\geqslant \delta\right).
		\end{align*}
		Now, for all $j\in \llbracket 0,k_i \rrbracket$, we have $\left|X_{ij}-Y_{ij}\right| \leqslant 1$, so
		\begin{align*}
			\P\left(\left|\sum\limits_{j=0}^{k_i}\left(\tij-\tijp\right) \left(X_{ij}-Y_{ij}\right)\right|\geqslant \delta\right) 
			&\leqslant \P\left(\frac{4d}{N_p}\left(k_i+1\right)\geqslant \delta\right)\\
			&\leqslant \P\left(\frac{4d}{N_p}\sqrt{N_p}\geqslant \delta\right) = \P\left(\frac{4d}{\sqrt{N_p}}\geqslant \delta\right)=0.\qedhere
		\end{align*}
	\end{proof}

	The following lemma allows us to address the right-hand term in the sum \eqref{sumki}.
	
	\begin{lemme}\label{grandki}
		Let $i\in \llbracket 1,d \rrbracket$. For all $\delta>0$, there exists $C_{i,\delta}>0$ such that:
		
		$\P\left(\frac{\left|\tio-\tiop\right|}{T}\left|\sum\limits_{j=0}^{k_i}\left(X_{ij}-Y_{ij}\right)\right|+\frac{\left|\ai-\aip\right|}{T}\left|\sum\limits_{j=0}^{k_i}j\left(X_{ij}-Y_{ij}\right)\right|\geqslant \delta\right) \leqslant C_{i,\delta} \exp\left(\frac{-\delta^2\left(k_i+1\right)}{2^8d^2}\right)$.
	\end{lemme}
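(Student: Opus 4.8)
The plan is to bound the two deterministic coefficients by explicit powers of $k_i$, rewrite the two random sums as differences of weighted averages of \emph{independent} Bernoulli variables, and then apply the large deviation estimate of Proposition \ref{gd} to each average.

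First I would control the coefficients using \eqref{temps1} and \eqref{enca}. Since $0\leqslant\tio\leqslant\ai\leqslant\frac{T}{k_i}$ and likewise for $\gamma^\prime$, both $\tio$ and $\tiop$ lie in $\left[0,\frac{T}{k_i}\right]$, so $\frac{\left|\tio-\tiop\right|}{T}\leqslant\frac{1}{k_i}$; and since $\ai,\aip\in\left[\frac{T}{k_i+2},\frac{T}{k_i}\right]$ we get $\frac{\left|\ai-\aip\right|}{T}\leqslant\frac{1}{k_i}-\frac{1}{k_i+2}=\frac{2}{k_i\left(k_i+2\right)}\leqslant\frac{2}{k_i^2}$. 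Writing $S_1=\sum_{j=0}^{k_i}\left(X_{ij}-Y_{ij}\right)$ and $S_2=\sum_{j=0}^{k_i}j\left(X_{ij}-Y_{ij}\right)$, the random quantity inside the probability is then at most $\frac{1}{k_i}\left|S_1\right|+\frac{2}{k_i^2}\left|S_2\right|$.

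The key observation — the step I expect to matter most — is the independence structure. Because $T<1$ each cell is crossed at most once, so the cells $c_0,\dots,c_{N_p}$ are pairwise distinct and the family $\left(X_i\right)_{i\in\llbracket 0,N_p\rrbracket}$ is independent. For fixed type $i$, the variables $X_{i0},\dots,X_{ik_i}$ are the indicators of the cells met \emph{just before} the successive type-$i$ hyperplanes and $Y_{i0},\dots,Y_{ik_i}$ those met \emph{just after}; as the crossing times are strictly increasing, each of these two families consists of indicators of distinct cells, hence is a subfamily of the independent family $\left(X_i\right)$ and is itself independent. This lets me introduce the averages $U_X=\frac{1}{k_i+1}\sum_{j=0}^{k_i}X_{ij}$, $U_Y=\frac{1}{k_i+1}\sum_{j=0}^{k_i}Y_{ij}$ and the weighted averages $W_X=\frac{2}{k_i\left(k_i+1\right)}\sum_{j=1}^{k_i}jX_{ij}$, $W_Y=\frac{2}{k_i\left(k_i+1\right)}\sum_{j=1}^{k_i}jY_{ij}$, each of expectation $\varepsilon$. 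Then $\frac{1}{k_i}S_1=\frac{k_i+1}{k_i}\left(U_X-U_Y\right)$ and $\frac{2}{k_i^2}S_2=\frac{k_i+1}{k_i}\left(W_X-W_Y\right)$, so using $\frac{k_i+1}{k_i}\leqslant 2$ and the triangle inequality the quantity is bounded by $2\left(\left|U_X-\varepsilon\right|+\left|U_Y-\varepsilon\right|+\left|W_X-\varepsilon\right|+\left|W_Y-\varepsilon\right|\right)$.

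Finally I would apply Proposition \ref{gd} to each of the four averages. For $U_X,U_Y$ the weights are all $\frac{1}{k_i+1}$ (with $k_i+1$ variables), and for $W_X,W_Y$ they are $\frac{2j}{k_i\left(k_i+1\right)}\leqslant\frac{2}{k_i+1}$ (with $k_i$ variables); in both cases the hypothesis $\lambda\leqslant\frac{c}{m}$ of Proposition \ref{gd} holds with $c=2$. A union bound over the event that one of the four deviations exceeds $\frac{\delta}{8}$ then gives, for $k_i\geqslant 3$,
\begin{equation*}
	\P\left(\frac{\left|\tio-\tiop\right|}{T}\left|S_1\right|+\frac{\left|\ai-\aip\right|}{T}\left|S_2\right|\geqslant\delta\right)\leqslant 4\,C_{\varepsilon,\delta/8}\exp\left(-\frac{\delta^2 k_i}{2^7}\right)\leqslant 4\,C_{\varepsilon,\delta/8}\exp\left(-\frac{\delta^2\left(k_i+1\right)}{2^8 d^2}\right),
\end{equation*}
the last inequality holding since $2d^2k_i\geqslant k_i+1$ for all $d\geqslant 1$ and $k_i\geqslant 1$. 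Taking $C_{i,\delta}=4\,C_{\varepsilon,\delta/8}$, enlarged if necessary so that the right-hand side is $\geqslant 1$ for the finitely many small values $k_i\leqslant 2$ (where the bound then holds trivially), yields the claim. Once the independence argument is secured, the only remaining verification is the weight condition of Proposition \ref{gd} for the $j$-weighted sum $S_2$.
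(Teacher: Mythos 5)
Your proof is correct and follows essentially the same route as the paper: center the sums via $X_{ij}-Y_{ij}=(X_{ij}-\varepsilon)-(Y_{ij}-\varepsilon)$, apply Proposition \ref{gd} to the same two weighted averages $\frac{1}{k_i+1}\sum_j\left(X_{ij}-\varepsilon\right)$ and $\frac{2}{k_i\left(k_i+1\right)}\sum_j j\left(X_{ij}-\varepsilon\right)$, and conclude by a union bound. The only (harmless, even slightly sharper) deviations are cosmetic: you bound $\left|\tio-\tiop\right|\leqslant T/k_i$ directly from \eqref{temps1} and \eqref{enca} where the paper invokes Lemma \ref{majotmt} together with $N_p\geqslant k_i+1$, you make explicit the independence of the subfamilies $\left(X_{ij}\right)_j$ and $\left(Y_{ij}\right)_j$ that the paper uses implicitly, and you organize the union bound as four events at level $\delta/8$ instead of the paper's two-step $\delta/2$ then $\delta/4$ split.
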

	
	\begin{proof}
		Let $\delta>0$. We have
		\begin{multline}
			\P\left(\frac{\left|\tio-\tiop\right|}{T}\left|\sum\limits_{j=0}^{k_i}\left(X_{ij}-Y_{ij}\right)\right|+\frac{\left|\ai-\aip\right|}{T}\left|\sum\limits_{j=0}^{k_i}j\left(X_{ij}-Y_{ij}\right)\right|\geqslant \delta\right)\\
			\leqslant \P\left(\frac{\left|\tio-\tiop\right|}{T}\left|\sum\limits_{j=0}^{k_i}\left(X_{ij}-Y_{ij}\right)\right|\geqslant \frac{\delta}{2}\right)+ \P\left(\frac{\left|\ai-\aip\right|}{T}\left|\sum\limits_{j=0}^{k_i}j\left(X_{ij}-Y_{ij}\right)\right|\geqslant \frac{\delta}{2}\right). 
			\label{ineg}
		\end{multline}
		But $\left|\sum\limits_{j=0}^{k_i}\left(X_{ij}-Y_{ij}\right)\right|
		\leqslant \left|\sum\limits_{j=0}^{k_i}\left(X_{ij}-\varepsilon\right)\right| + \left|\sum\limits_{j=0}^{k_i}\left(\varepsilon - Y_{ij}\right)\right|$, and since for all $j\in \llbracket 0,k_i \rrbracket $, $X_{ij}$ et $Y_{ij}$ follow the same distribution, we obtain by Lemma \ref{majotmt} that:
		\begin{align*}
			\P\left(\frac{\left|\tio-\tiop\right|}{T}\left|\sum\limits_{j=0}^{k_i}\left(X_{ij}-Y_{ij}\right)\right|\geqslant \frac{\delta}{2}\right) 
			&\leqslant 2\P\left(\frac{\left|\tio-\tiop\right|}{T}\left|\sum\limits_{j=0}^{k_i}\left(X_{ij}-\varepsilon\right)\right|\geqslant \frac{\delta}{4}\right)\\
			&\leqslant 2\P\left(\frac{4d}{N_p}\left|\sum\limits_{j=0}^{k_i}\left(X_{ij}-\varepsilon\right)\right|\geqslant \frac{\delta}{4}\right)\\
			&\leqslant 2\P\left(\frac{4d}{k_i+1}\left|\sum\limits_{j=0}^{k_i}\left(X_{ij}-\varepsilon\right)\right|\geqslant \frac{\delta}{4}\right),
		\end{align*}
		because $N_p\geqslant k_i+1$.
		By applying the large deviation result from Proposition \ref{gd}  to the centered random variable $Y_{k_i}^\prime:=\frac{1}{k_i+1}\sum\limits_{j=0}^{k_i}\left(X_{ij}-\varepsilon\right)$, there exists $C_{i,\delta}^\prime>0$ such that $\P\left(\frac{4d}{k_i+1}\left|\sum\limits_{j=0}^{k_i}\left(X_{ij}-\varepsilon\right)\right|\geqslant \frac{\delta}{4}\right)\leqslant C_{i,\delta}^\prime \exp\left(\frac{-\delta^2(k_i+1)}{2^8d^2}\right)$. Hence,
		\begin{equation*}
			\P\left(\frac{\left|\tio-\tiop\right|}{T}\left|\sum\limits_{j=0}^{k_i}\left(X_{ij}-Y_{ij}\right)\right|\geqslant \frac{\delta}{2}\right)\leqslant 2 C_{i,\delta}^\prime \exp\left(\frac{-\delta^2(k_i+1)}{2^8d^2}\right) ;
		\end{equation*}
		which upper bounds the first term on the right-hand side of inequality \eqref{ineg}. Let's now consider the second term. 
		By similar arguments as before and using \eqref{enca} for the second inequality, we obtain:
		\begin{align*}
			\P\left(\frac{\left|\ai-\aip\right|}{T}\left|\sum\limits_{j=0}^{k_i}j\left(X_{ij}-Y_{ij}\right)\right|\geqslant \frac{\delta}{2}\right)
			&\leqslant 2\P\left(\frac{\left|\ai-\aip\right|}{T}\left|\sum\limits_{j=0}^{k_i}j\left(X_{ij}-\varepsilon\right)\right|\geqslant \frac{\delta}{4}\right)\\
			&\leqslant 2\P\left(\frac{2}{k_i\left(k_i+1\right)}\left|\sum\limits_{j=0}^{k_i}j\left(X_{ij}-\varepsilon\right)\right|\geqslant \frac{\delta}{4}\right).
		\end{align*}
		By applying the large deviation result to the centered random variable $Y_{k_i}^{\prime\prime}:=\frac{2}{k_i\left(k_i+1\right)}\sum\limits_{j=0}^{k_i}j\left(X_{ij}-\varepsilon\right)$, there exists $C_{i,\delta}^{\prime\prime}>0$ such that $\P\left(\frac{2}{k_i\left(k_i+1\right)}\left|\sum\limits_{j=0}^{k_i}j\left(X_{ij}-\varepsilon\right)\right|\geqslant \frac{\delta}{4}\right)\leqslant C_{i,\delta}^{\prime\prime} \exp\left(\frac{-\delta^2(k_i+1)}{2^4}\right)$. Hence,
		\begin{equation*}
			\P\left(\frac{\left|\ai-\aip\right|}{T}\left|\sum\limits_{j=0}^{k_i}j\left(X_{ij}-Y_{ij}\right)\right|\geqslant \frac{\delta}{2}\right)
			\leqslant 2 C_{i,\delta}^{\prime\prime} \exp\left(\frac{-\delta^2(k_i+1)}{2^4}\right).
		\end{equation*}
		Thus, \eqref{ineg} becomes:
		\begin{multline*}
			\P\left(\frac{\left|\tio-\tiop\right|}{T}\left|\sum\limits_{j=0}^{k_i}\left(X_{ij}-Y_{ij}\right)\right|+\frac{\left|\ai-\aip\right|}{T}\left|\sum\limits_{j=0}^{k_i}j\left(X_{ij}-Y_{ij}\right)\right|\geqslant \delta\right) \bigskip \\
			\leqslant 2 C_{i,\delta}^\prime \exp\left(\frac{-\delta^2(k_i+1)}{2^8d^2}\right) + 2 C_{i,\delta}^{\prime\prime} \exp\left(\frac{-\delta^2(k_i+1)}{2^4}\right)
			\leqslant 2 \left(C_{i,\delta}^\prime + C_{i,\delta}^{\prime\prime}\right) \exp\left(\frac{-\delta^2(k_i+1)}{2^8d^2}\right).\qedhere
		\end{multline*}
	\end{proof}
	

	Let $\delta>0$ and $n>\frac{2^6d^{\frac{9}{2}}}{\delta^2T}$. Then, we have $N_p\geqslant\frac{2^4d^2}{\delta^2}$. Indeed, a geodesic in $\G_p$ spends at most $\frac{\sqrt{d}}{n}$ time in the same cell. Therefore, since the elements of $\G_p$ traverse $N_p$ cells, $T$ is upper bounded by $N_p\frac{\sqrt{d}}{n}$, which implies $N_p\geqslant \frac{nT}{\sqrt{d}}$. 
	Therefore, by Lemmas \ref{petitki} and \ref{grandki}, we obtain:
	
	\begin{align*}
		& ~ \P\left(\left|\m-\mp\right|\geqslant \delta \right) \leqslant \P\left(\frac{1}{T}\sum\limits_{i=1}^{d}\left|\sum\limits_{j=0}^{k_i}\left(\tij-\tijp\right) \left(X_{ij}-Y_{ij}\right)\right|\geqslant \delta \right)\\
		\leqslant & ~ \P\left(\frac{1}{T}\sum\limits_{\substack{i\in \llbracket 1,d \rrbracket \\ k_i\leqslant \sqrt{N_p}-1}}\left|\sum\limits_{j=0}^{k_i}\left(\tij-\tijp\right) \left(X_{ij}-Y_{ij}\right)\right|\geqslant \frac{\delta}{2}\right)\\
		+ & ~ \P\left(\frac{1}{T}\sum\limits_{\substack{i\in \llbracket 1,d \rrbracket \\ k_i > \sqrt{N_p}-1}}\left[\left|\tio-\tiop\right|\left|\sum\limits_{j=0}^{k_i}\left(X_{ij}-Y_{ij}\right)\right|+\left|\ai-\aip\right|\left|\sum\limits_{j=0}^{k_i}j\left(X_{ij}-Y_{ij}\right)\right|\right]\geqslant \frac{\delta}{2}\right)\\
		\leqslant & \sum\limits_{\substack{i\in \llbracket 1,d \rrbracket \\ k_i\leqslant \sqrt{N_p}-1}}\P\left(\frac{1}{T}\left|\sum\limits_{j=0}^{k_i}\left(\tij-\tijp\right) \left(X_{ij}-Y_{ij}\right)\right|\geqslant \frac{\delta}{2d}\right)\\
		+ & \sum\limits_{\substack{i\in \llbracket 1,d \rrbracket \\ k_i > \sqrt{N_p}-1}} \P\left(\frac{1}{T}\left[\left|\tio-\tiop\right|\left|\sum\limits_{j=0}^{k_i}\left(X_{ij}-Y_{ij}\right)\right|+\left|\ai-\aip\right|\left|\sum\limits_{j=0}^{k_i}j\left(X_{ij}-Y_{ij}\right)\right|\right]\geqslant \frac{\delta}{2d}\right)\\
		\leqslant & \left(\sum\limits_{\substack{i\in \llbracket 1,d \rrbracket \\ k_i > \sqrt{N_p}-1}}C_{i, \delta} \right) \exp\left(\frac{-\delta^2\sqrt{N_p}}{2^{10}d^4}\right)
		\leqslant ~ d\left(\max\limits_{i\in \llbracket 1,d \rrbracket }C_{i, \delta} \right) \exp\left(\frac{-\delta^2\sqrt{N_p}}{2^{10}d^4}\right) ;
	\end{align*}
	by setting, for all $i\in \llbracket 1,d \rrbracket $ such that $ k_i< \sqrt{N_p}-1$, $ C_{i, \delta}=0$.
	
	Then, as $N_p\geqslant \frac{nT}{\sqrt{d}}$, we obtain, by noting $C=d\left(\max\limits_{i\in \llbracket 1,d \rrbracket }C_{i, \delta} \right)$,
	\begin{equation*}
		\P\left(\left|\m-\mp\right|\geqslant \delta \right) \leqslant C \exp\left(\frac{-\delta^2\sqrt{nT}}{2^{10}d^{\frac{17}{4}}}\right).
	\end{equation*}
	Then, since this inequality holds for all $\g, \g' \in \G_p$,  
	\begin{equation}\label{supdiff}
		\P\left(\sup\limits_{\g, \g' \in \G_p} \left|\m-\mp\right|\geqslant \delta \right) \leqslant C \exp\left(\frac{-\delta^2\sqrt{nT}}{2^{10}d^{\frac{17}{4}}}\right).
	\end{equation}

	\subsection{Conclusion}
	
	This section summarizes the previous two sections.
	
	Let $\delta>0$, we assume that $n>\frac{2^6d^{\frac{9}{2}}}{\delta^2T}$.
	We recall that for every $p\in \cP_d$, $\g_p \in \G_p$ denotes a representative of the set $\G_p$.
	
	\begin{lemme}\label{gdgp}
		For all $p\in \cP_d$, there exists $C>0$ such that $\P\left(\mpp\leqslant\varepsilon-\delta\right)\leqslant C \exp\left(\frac{-\delta^2nT^2}{d\left(T+1\right)}\right)$.
	\end{lemme}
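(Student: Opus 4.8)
The plan is to recognize $\mpp$ as a weighted sum of independent Bernoulli variables and to apply the large deviation estimate of Proposition \ref{gd} with a constant $c$ tuned to the class $p$. First I would specialize the identity established just before Lemma \ref{majotmt} to the representative $\g_p$, writing
\begin{equation*}
	\mpp = \sum_{i=0}^{N_p} \la_i X_i, \qquad \la_i := \frac{s_{i+1}\left(\g_p\right) - s_i\left(\g_p\right)}{T},
\end{equation*}
where the weights $\la_i$ are positive (each cell $c_i$ is genuinely traversed by $\g_p$) and satisfy $\sum_{i=0}^{N_p}\la_i = 1$, and where $\left(X_i\right)_{0\leqslant i\leqslant N_p}$ are i.i.d.\ Bernoulli random variables of parameter $\varepsilon$. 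Setting $m := N_p+1$, this is exactly the framework of Proposition \ref{gd}, and $m\geqslant 3$ under the standing assumption on $n$.

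Next I would control the weights. Since $T\in\left(0,1\right)$ each cell is crossed at most once, and the time spent in a cell during a single passage is at most $\frac{\sqrt{d}}{n}$; hence $\la_i\leqslant \frac{\sqrt{d}}{nT}$ for every $i$. Combined with $\sum_i\la_i=1$ this also gives $T\leqslant \left(N_p+1\right)\frac{\sqrt{d}}{n}$, that is $N_p+1\geqslant \frac{nT}{\sqrt{d}}$. I would then choose
\begin{equation*}
	c := \frac{\left(N_p+1\right)\sqrt{d}}{nT},
\end{equation*}
so that $\la_i\leqslant \frac{\sqrt{d}}{nT}=\frac{c}{m}$ holds for all $i$, and $c>1$ (strictly, because for $\g_p\in\G$ the bound $\la_i\leqslant \sqrt{d}/(nT)$ cannot be an equality in every cell).

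Applying Proposition \ref{gd} to $\mpp=\sum_i\la_i X_i$ with this $m$ and $c$ then yields
\begin{equation*}
	\P\left(\left|\mpp-\varepsilon\right|\geqslant\delta\right)\leqslant C_{\varepsilon,\delta}\exp\left(-\delta^2\frac{m}{c}\right)=C_{\varepsilon,\delta}\exp\left(-\frac{\delta^2 nT}{\sqrt{d}}\right),
\end{equation*}
since $\frac{m}{c}=\frac{N_p+1}{\left(N_p+1\right)\sqrt{d}/(nT)}=\frac{nT}{\sqrt{d}}$. To conclude I would use $\{\mpp\leqslant\varepsilon-\delta\}\subset\{\left|\mpp-\varepsilon\right|\geqslant\delta\}$ together with the elementary comparison of exponents: from $\sqrt{d}\left(T+1\right)\geqslant T+1>T$ one gets $\frac{nT}{\sqrt{d}}\geqslant \frac{nT^2}{d\left(T+1\right)}$, so the right-hand exponential is dominated by $\exp\left(-\frac{\delta^2 nT^2}{d\left(T+1\right)}\right)$, and the lemma follows with $C=C_{\varepsilon,\delta}$. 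The only genuinely delicate point is the choice of $c$: it must be tied to $N_p$ rather than fixed, precisely so that $m/c$ collapses to the $p$-independent rate $nT/\sqrt{d}$; a fixed $c$ of order $d$ would cost a spurious factor $\sqrt{d}$ in the exponent and would fail to dominate the target rate $\frac{nT^2}{d\left(T+1\right)}$ in high dimension. Everything else is the routine verification of the hypotheses of Proposition \ref{gd} and the exponent comparison.
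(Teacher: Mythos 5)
Your proposal is correct and follows essentially the same route as the paper: the paper's proof likewise writes $\mpp$ as a weighted sum $\sum_i \frac{t_i\left(\g_p\right)}{T}X_i$ of i.i.d.\ Bernoulli variables with weights summing to $1$, applies Proposition \ref{gd}, and uses $N_p\geqslant \frac{nT}{\sqrt{d}}$ to reach the stated exponent. The only differences are cosmetic bookkeeping: you use the single-crossing bound $\la_i\leqslant \frac{\sqrt{d}}{nT}$ (valid since $T\in\left(0,1\right)$) and tie $c$ to $N_p$, whereas the paper uses the cruder bound $\frac{\sqrt{d}\left(T+1\right)}{nT}$ and passes through an intermediate $N_p$-dependent exponent; you are also more explicit than the paper in verifying the hypotheses $c>1$ and $m\geqslant 3$ of Proposition \ref{gd}.
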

	
	\begin{proof} Let $p\in\cP_d$.
		We have $\mpp=\sum\limits_{i=1}^{N_p}\frac{t_i\left(\g_p\right)}{T}X_i$, where for each $i\in \llbracket 1, N_p \rrbracket$, $t_i\left(\g_p\right)$ denotes the time spent by $\g_p$ in the cell $c_i$. Moreover, $\sum\limits_{i=1}^{N_p}\frac{t_i\left(\g_p\right)}{T}=1$, and for each $i\in \llbracket 1, N_p \rrbracket$, $\frac{t_i\left(\g_p\right)}{T}\leqslant \frac{\sqrt{d}\left(T+1\right)}{Tn}$. Thus, by applying Proposition \ref{gd} to the random variable $\mpp$, there exists $C>0$ such that :
		\begin{equation*}
			\P\left(\mpp\leqslant\varepsilon-\delta\right) \leqslant C \exp\left(\frac{-\delta^2N_pT}{\sqrt{d}\left(T+1\right)}\right) \leqslant C \exp\left(\frac{-\delta^2nT^2}{d\left(T+1\right)}\right).\qedhere
		\end{equation*}
	\end{proof}
	
	
	For each $p\in\cP_d$, we define $\Gamma_p = \overline{\G_p}\cap \Gamma \backslash \Gamma_H$, which is the closure of $\G_p$ in $\Gamma \backslash \Gamma_H$. Thus, we have $\G_p\subset \Gamma_p$, and $\Gamma_p$ contains the geodesics of $\G_p$, and also the geodesics that intersect at least one edge without being fully contained in a hyperplane of $\cG^n$. Therefore, $\Gamma\backslash\Gamma_H=\bigcup\limits_{p\in\cP_d}\Gamma_p$. Hence:
	
	\begin{align*}
		\P\left(\inf\limits_{\g \in \Gamma\backslash\Gamma_H} \m \leqslant\varepsilon-\delta\right) &=\P\left(\inf\limits_{p \in \cP_d}\left( \inf\limits_{\g \in \Gamma_p} \m \right) \leqslant\varepsilon-\delta\right)\\
		&\leqslant \sum\limits_{p\in\cP_d} \P\left(\inf\limits_{\g \in \Gamma_p} \m \leqslant\varepsilon-\delta\right).
	\end{align*}
	
	Let $p\in\cP_d$, let's evaluate $\P\left(\inf\limits_{\g \in \Gamma_p} \m \leqslant\varepsilon-\delta\right)$.
	By continuity of $\g \mapsto \m$ on $\Gamma \backslash \Gamma_H$, we have 
	\begin{equation*}
		\P\left(\inf\limits_{\g \in \Gamma_p} \m \leqslant\varepsilon-\delta\right) = \P\left(\inf\limits_{\g \in \G_p} \m \leqslant\varepsilon-\delta\right).
	\end{equation*}
	Then, 
	
	\begin{align*}
		&\P\left(\inf\limits_{\g \in \G_p} \m \leqslant\varepsilon-\delta\right) = \P\left(\inf\limits_{\g \in \G_p} \m - \mpp + \mpp \leqslant\varepsilon-\delta\right)\\
		&= \P\left(\inf\limits_{\g \in \G_p} \left( \m - \mpp \right) + \mpp \leqslant\varepsilon-\delta \; \middle| \; \mpp \leqslant \varepsilon - \frac{\delta}{2} \right) \P\left(\mpp \leqslant \varepsilon - \frac{\delta}{2} \right)\\
		&+ \, \P\left(\inf\limits_{\g \in \G_p} \left( \m - \mpp \right) + \mpp \leqslant\varepsilon-\delta \; \middle| \; \mpp > \varepsilon - \frac{\delta}{2} \right) \P\left(\mpp > \varepsilon - \frac{\delta}{2} \right) \\
		&\leqslant \P\left(\mpp \leqslant \varepsilon - \frac{\delta}{2} \right) + \, \P\left(\inf\limits_{\g \in \G_p} \left( \m - \mpp \right) \leqslant -\frac{\delta}{2} \right) \\
		&\leqslant \P\left(\mpp \leqslant \varepsilon - \frac{\delta}{2} \right) + \, \P\left(\left|\inf\limits_{\g \in \G_p} \left( \m - \mpp \right)\right| \geqslant \frac{\delta}{2} \right) \\
		&\leqslant \P\left(\mpp \leqslant \varepsilon - \frac{\delta}{2} \right) + \, \P\left(\sup\limits_{\g \in \G_p} \left| \m - \mpp \right| \geqslant \frac{\delta}{2} \right).
	\end{align*}

	We then upper bound each term using the results previously obtained: the left-hand side term using Lemma \ref{gdgp}, and the right-hand side term using \eqref{supdiff}. Thus, there exist $C_1>0$ and $C_2>0$ such that:
	\begin{equation*}
		\P\left(\inf\limits_{\g \in \G_p} \m \leqslant\varepsilon-\delta\right) \leqslant C_1\exp\left(\frac{-\delta^2nT^2}{4d\left(T+1\right)}\right)+ C_2 \exp\left(\frac{-\delta^2\sqrt{nT}}{2^{12}d^{\frac{17}{4}}}\right).
	\end{equation*}
	Thus, 
	\begin{align*}
		\P\left(\inf\limits_{\g \in \Gamma\backslash\Gamma_H} \m \leqslant\varepsilon-\delta\right) &\leqslant \sum\limits_{p\in\cP_d} \P\left(\inf\limits_{\g \in \G_p} \m \leqslant\varepsilon-\delta\right)\\
		&\leqslant \sum\limits_{p\in\cP_d} \left[C_1\exp\left(\frac{-\delta^2nT^2}{4d\left(T+1\right)}\right)+ C_2 \exp\left(\frac{-\delta^2\sqrt{nT}}{2^{12}d^{\frac{17}{4}}}\right)\right]\\
		&\leqslant \#\cP_d \left[C_1\exp\left(\frac{-\delta^2nT^2}{4d\left(T+1\right)}\right)+ C_2 \exp\left(\frac{-\delta^2\sqrt{nT}}{2^{12}d^{\frac{17}{4}}}\right)\right].
	\end{align*}
	Finally, by Lemma \ref{ppoly}, since $\#\cP_d$ is polynomial in $n$, 
	
	\begin{equation*}
		\underset{n \to +\infty}{\lim}\P\left(\inf\limits_{\g \in \Gamma\backslash\Gamma_H} \m \leqslant\varepsilon-\delta\right) = 0.
	\end{equation*}

	\appendix
	
	\section{An arithmetic problem}
	
	Let $a_1,\dots,a_n \in \R^2$ such that $a_i\neq a_j$ if $i\neq j$. Let $\cA=\left\{a_1, a_2, \dots, a_{n}\right\}$ the set formed by these points.
	We define 
	\begin{equation*}
		\Lambda\left(\cA\right)=\left\{\left\{S,\cA \backslash S \right\} \middle| S\in\cP(\cA) \text{ such that there exists a line } \g \text{ such that } \g\cap \cA=\emptyset \text{ and } \g \text{ splits } S \text{ and } \cA \backslash S \right\}.
	\end{equation*}
	
	\begin{lemme}\label{sepnpt}
		There exists $P\in \R[X]$ such that for any set of $n$ points in the plane $\cA=\left\{a_1,\dots,a_n\right\}$, $\#\Lambda\left(\cA\right)\leqslant P(n)$, i.e. the number of ways to strictly separate $n$ points with a line in the plane is polynomial in $n$.
	\end{lemme}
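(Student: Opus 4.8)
The plan is to bound $\#\Lambda(\cA)$ by a rotating-line (sweeping) argument, aiming only for a polynomial bound rather than the optimal one. I would parametrise an affine line by a unit normal direction $u_\theta=\left(\cos\theta,\sin\theta\right)$, $\theta\in[0,\pi)$, and a signed offset $r\in\R$, writing the line as $\left\{x\in\R^2 \mid \langle x,u_\theta\rangle=r\right\}$. First I would observe that the set of lines $\g$ with $\g\cap\cA=\emptyset$ realising a fixed separation $\left\{S,\cA\setminus S\right\}$ is open in the $(\theta,r)$-space, since strict separation of the finite set $\cA$ survives small perturbations of the line. This openness is exactly what lets me reduce the count to the separations achievable by lines whose direction $\theta$ avoids a finite "bad" set, to be identified below.

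Next, for a fixed direction $\theta$ I would set $p_i(\theta)=\langle a_i,u_\theta\rangle$. The line of normal $u_\theta$ and offset $r$ sends $a_i$ to the side determined by the sign of $p_i(\theta)-r$, so the separations realisable in direction $\theta$ are exactly the threshold cuts of the real numbers $p_1(\theta),\dots,p_n(\theta)$; when these values are pairwise distinct there are at most $n+1$ of them. Then I would control how the order of the tuple $\left(p_i(\theta)\right)_i$ varies with $\theta$: two entries coincide, $p_i(\theta)=p_j(\theta)$, exactly when $\langle a_i-a_j,u_\theta\rangle=0$, and since $a_i\neq a_j$ this holds for a single direction $\theta_{ij}\in[0,\pi)$. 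Hence there are at most $\binom{n}{2}$ exceptional directions, and they cut $[0,\pi)$ into at most $\binom{n}{2}+1$ open arcs on each of which the ordering of the projections — and therefore the family of realisable threshold cuts — is constant.

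Combining these, every element of $\Lambda(\cA)$ is realised by a line whose direction lies in the interior of one of these arcs (by the openness step), and each arc contributes at most $n+1$ separations, so $\#\Lambda(\cA)\leqslant\left(\binom{n}{2}+1\right)(n+1)$, which is polynomial in $n$ of degree at most $3$. This proves the lemma with the fixed polynomial $P(X)=\left(\binom{X}{2}+1\right)(X+1)$, independent of $\cA$. I would add the remark that by tracking only the single threshold cut created or destroyed when $\theta$ crosses one exceptional direction, the bound sharpens to the optimal $O\!\left(n^2\right)$, but this refinement is not needed here.

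The step I expect to be the main obstacle is the reduction in the first paragraph: I must be sure that a separation realisable only by lines of exceptional direction is nonetheless already counted on an adjacent arc, and that no separation is missed. The resolution is precisely the openness of the realising set together with the continuity of $\theta\mapsto p_i(\theta)$: a strictly separating line survives a small simultaneous perturbation of $(\theta,r)$, so one may always nudge $\theta$ off the finite exceptional set while preserving the same partition of $\cA$.
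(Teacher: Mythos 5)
Your proposal is correct and follows essentially the same route as the paper: both sweep over the direction space $[0,\pi)$, cut it into at most $\binom{n}{2}+1$ arcs at the finitely many directions determined by pairs of points of $\cA$, show the family of realisable separations is constant on each arc, bound it by the $n+1$ threshold cuts per direction, and multiply to get $\left(\binom{n}{2}+1\right)(n+1)$. The only differences are local: you justify constancy on each arc via the constant ordering of the projections $p_i(\theta)$, where the paper instead runs an explicit translate--rotate--translate continuation of the separating line around a single pivot point, and your openness argument makes explicit the boundary case of separations realised only at exceptional directions, which the paper leaves implicit.
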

	
	\begin{proof}
		Consider a set $\cA=\left\{a_1, a_2, \dots, a_{n}\right\}$ of $n$ points in the plane.
		The directions are parameterized by the real projective line $\P^1(\R)$, which is identified with $[0,\pi[$ through the angle formed by a direction and the x-axis.
		For any $i,j\in\llbracket 1, n\rrbracket$, let $\theta(i,j)\in\P^1(\R)$ be the direction of the line passing through $a_i$ and $a_j$. Let $\cD(\cA)=\left\{\theta(i,j)\in \P^1(\R) \, \middle| \, i,j\in\llbracket 1, n\rrbracket\right\}$, be the set of directions formed by two points in $\cA$. Then $N:=\#\cD(\cA)\leqslant \binom{n}{2}=\frac{n(n-1)}{2}$.
		Let $0\leqslant\theta_1<\theta_2<\dots<\theta_N<\pi$ be the elements of $\cD(\cA)$.
		Let $\gamma$ be a line in the plane such that $\g\cap\cA=\emptyset$. Then, $\gamma$ splits $\mathcal{A}$ into two sets, $S_1(\gamma)$ and $\mathcal{A}\backslash S_1(\gamma)$ where $S_1(\gamma)$ is the one that contains $a_1$. We also identify the partition $\{S_1(\gamma),\mathcal{A}\backslash S_1(\gamma)\}$ with the set $S_1(\gamma)$ that contains $a_1$.
		For any $\theta \in \P^1(\R)$, let $P(\theta)=\left\{S_1(\g) \, \middle| \, \g \text{ line with slope } \theta \right\}$ be the set of partitions of points in $\cA$ according to the direction $\theta$.
		Let $i\in\llbracket 1, N-1\rrbracket$.
		Let $\theta, \theta' \in ]\theta_i,\theta_{i+1}[$, let us show that $P(\theta)=P(\theta')$.
		Let $A\in P(\theta)$, then there exists $\g$ a line with slope $\theta$ such that $A=S_1(\g)$. Let us show that $A\in P(\theta')$, i.e., there exists $\g'$ a line with slope $\theta'$ such that $A=S_1(\g')$. 
		We translate $\gamma$ orthogonally until it intersects a point of $\mathcal{A}$. This transformation does not alter the slope of the resulting line. Let $\tilde{\gamma}$ be the resulting line. Then, there exists a single point $a\in\mathcal{A}$ such that $a\in\tilde{\gamma}$ ($a$ is unique since $\theta \notin \cD(\cA)$). We then rotate $\tilde{\gamma}$ with center $a$ until it has slope $\theta'$. 
		Note that during this rotation, the line with slope $\tilde{\theta}\in[\theta, \theta']$ does not intersect any other points of set $\cA$, since $\tilde{\theta}\notin \cD(\cA)$. 
		Let $\tilde{\tilde{\gamma}}$ be the resulting line. 
		The line $\tilde{\tilde{\gamma}}$ contains $a$ and splits the other points into:
		\begin{itemize}
			\item $(S_1(\g)\backslash \{a\},\mathcal{A}\backslash S_1(\g))$ if $a\in S_1(\g)$;
			\item $(S_1(\g),\mathcal{A}\backslash (S_1(\g)\cup \{a\}))$ if $a\notin S_1(\g)$.
		\end{itemize}
		By slightly othogonally translating $\tilde{\tilde{\gamma}}$ in the good direction, we obtain a line $\gamma'$ with slope $\theta'$ such that $\g'\cap\cA=\emptyset$ and $S_1(\gamma')=A$.
		
		Furthermore, for any $i\in\llbracket 1, N-1\rrbracket$ and $\theta \in ]\theta_i,\theta_{i+1}[$, $\# P(\theta)\leqslant n+1$ (by translating a line with slope $\theta$ in the plane, we sweep through the set of $n$ points of $\cA$).
		
		Thus, $\#\Lambda\left(\cA\right)\leqslant \left(N+1\right)\left(n+1\right) \leqslant \left(\frac{n\left(n-1\right)}{2} +1\right)\left(n+1\right)$.
		\begin{center}
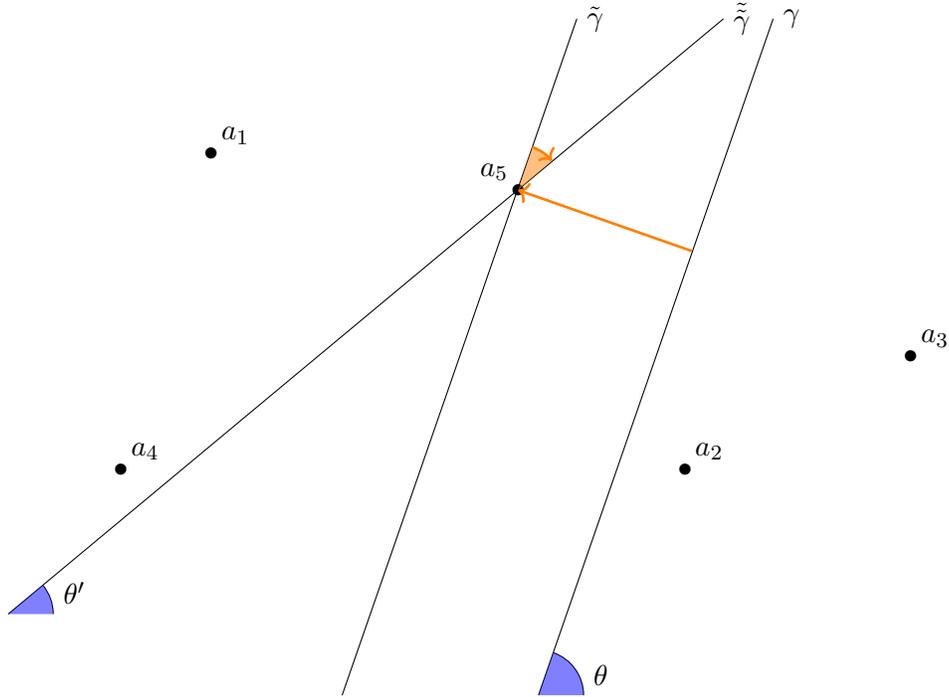

			\begin{tikzpicture}[scale=3]
				\draw (0.9,3.4) node{$\bullet$} node[above right]{$a_1$};
				\draw (3,2) node{$\bullet$} node[above right]{$a_2$};
				\draw (4,2.5) node{$\bullet$} node[above right]{$a_3$};
				\draw (0.5,2) node{$\bullet$} node[above right]{$a_4$};
				\draw (2.26,3.24) node{$\bullet$} node[above left]{$a_5$};
				\draw (3.39,4)node[right]{$\g$}--(2.35,1);
				\draw (2.52,4)node[right]{$\tilde{\g}$}--(1.48,1);
				\draw (3.17,4)node[right]{$\tilde{\tilde{\g}}$}--(0,1.36);
				\draw[->,orange,line width=1pt] (3.03,2.97)--(2.26,3.24);
				\draw (2.55,1) node[above right]{$\theta$} arc (0:70.86:0.2);
				\draw (0.2,1.36) node[above right]{$\theta'$} arc (0:39.81:0.2);
				\fill[blue,opacity=0.5] (2.35,1)--(2.55,1) arc (0:70.86:0.2)--(2.35,1);
				\fill[blue,opacity=0.5] (0,1.36)--(0.2,1.36) arc (0:39.81:0.2)--(0,1.36);
				\draw[<-,orange,line width=1pt] (2.41,3.37) arc (39.81:70.86:0.2);
				\fill[orange,opacity=0.5] (2.26,3.24)--(2.41,3.37) arc (39.81:70.86:0.2)--(2.26,3.24);
			\end{tikzpicture}
			\captionof{figure}{Example of a set $\cA=\{a_1,\dots,a_5\}$ and representation of $\tilde{\g}$ and $\tilde{\tilde{\g}}$ for $\g$, $\theta$ and $\theta'$ fixed}\label{espproj}
		\end{center}
		
	\end{proof}

\end{document}